\theoremstyle{plain}
\newtheorem{thm}{Theorem}[]
\newtheorem{lem}[thm]{Lemma}
\newtheorem{prop}[thm]{Proposition}
\theoremstyle{definition}
\newtheorem*{rmk}{Remarks}
\renewcommand{\P}{\mathbb{P}}
\newcommand{\E}{\mathbb{E}}
\newcommand{\R}{\mathbb{R}}
\newcommand{\F}{\mathcal{F}}
\newcommand{\G}{\mathcal{G}}
\newcommand{\Pt}{\tilde{\mathbb{P}}}
\newcommand{\hs}{\hspace{2mm}}
\newcommand{\hsl}{\hspace{1mm}}
\newcommand{\ind}{\mathbbm{1}}
\newcommand{\eps}{\varepsilon}
\newcommand{\bp}{\begin{proof}}
\newcommand{\ep}{\end{proof}}
\newcommand{\ec}{\mathcal{E}}
\newcommand{\smfr}[2]{\hbox{$\frac{#1}{#2}$}}
\def\bal#1\eal{\begin{align*}#1\end{align*}}
\title{Fine asymptotics for the consistent maximal displacement of branching Brownian motion}
\author{Matthew I.~Roberts\footnote{Department of Mathematical Sciences, University of Bath, Bath, BA2 7AY, UK. Email: \texttt{mattiroberts@gmail.com}}}
\begin{document}

\maketitle

\abstract{It is well-known that the maximal particle in a branching Brownian motion sits near $\sqrt2 t - \frac{3}{2\sqrt2}\log t$ at time $t$. One may then ask about the paths of particles near the frontier: how close can they stay to this critical curve? Two different approaches to this question have been developed. We improve upon the best-known bounds in each case, revealing new qualitative features including marked differences between the two approaches.}

\section{Introduction}

A standard branching Brownian motion (BBM) begins with one particle at the origin. This particle moves as a Brownian motion, until an independent exponentially distributed time of parameter 1, at which point it is instantaneously replaced by two new particles. These particles independently repeat the stochastic behaviour of their parent relative to their start position, each moving like a Brownian motion and splitting into two at an independent exponentially distributed time of parameter 1.

Let $N(t)$ be the set of all particles alive at time $t$, and for a particle $v\in N(t)$ let $X_v(s)$ represent its position at time $s\leq t$ (or if $v$ was not yet alive at time $s$, then the position of the unique ancestor of $v$ that was alive at time $s$). If we define
\[M(t) = \max_{v\in N(t)} X_v(t)\]
then it is well known \cite{kolmogorov_et_al:etude_kpp} that 
\[\frac{M(t)}{t}\to \sqrt2 \hs\text{ as } t\to\infty.\]

One of the most striking results on BBM was given by Bramson \cite{bramson:convergence_Kol_eqn_trav_waves}, who calculated fine asymptotics for the distribution of $M(t)$, providing new results on travelling wave solutions to the FKPP equation. Hu and Shi \cite{hu_shi:minimal_pos_crit_mg_conv_BRW} more recently (and for branching random walks rather than BBM) showed fluctuations in the almost sure behaviour of $M(t)$ on the log scale.

The behaviour of the frontier of the system --- loosely speaking, the collection of particles near the maximum $M(t)$ at time $t$ --- is of interest as a tractable model that is conjectured, or in some cases proved, to belong to the same universality class as constructions arising in biology \cite{derrida_simon:survival_prob_brw_absorbing_wall, kolmogorov_et_al:etude_kpp} and statistical physics \cite{bolthausen_et_al:entropic_repulsion, bramson_zeitouni:tightness_max_GFF}. It is natural, then, to ask what the paths of particles near $M(t)$ look like.

The problem of interest in this article is that of consistent maximal displacements: how close can particles stay to the critical line $\sqrt2 u, \hsl u\geq0$? There are at least two ways of making this question precise, each of which has been considered before for the related model of branching random walks. The first is to ask for which curves $f:[0,\infty)\to\R$ it is possible for particles to stay above $f(t)$ for all times $t\geq0$. That is, when is
\[\nu(f):=\P(\forall t\geq0, \exists v\in N(t) : X_v(u)>f(u) \hs\forall u\leq t)\]
non-zero? This was first considered by Jaffuel \cite{jaffuel:crit_barrier_brw_absorption} (for branching random walks), who proved that there is a critical value $A_c = 3^{4/3}\pi^{2/3}2^{-7/6}$ such that if we set $f_a(t) = \sqrt2 t - at^{1/3} - 1$ then $\nu(f_a)>0$ if $a>A_c$, and $\nu(f_a)=0$ if $a<A_c$.

The second approach is to look at recentered paths, specifically the behaviour of
\[\Lambda(t) = \min_{v\in N(t)} \sup_{s\in[0,t]}\{\sqrt2 s-X_v(s)\},\]
as $t\to\infty$. This quantity (or rather, again, its analogue for branching random walks) was studied by Fang and Zeitouni \cite{fang_zeitouni:consistent_maximal_displacement} and by Faraud, Hu and Shi \cite{faraud_hu_shi:conv_biased_rws_trees}, who showed that there is a critical value $a_c = 3^{1/3}\pi^{2/3}2^{-1/2}$ such that almost surely
\[\lim_{t\to\infty}\frac{\Lambda(t)}{t^{1/3}} = a_c.\]

To summarise, the two approaches to the question give similar results: in each case there appears to be a critical line on the $t^{1/3}$ scale above which particles cannot remain. We shall see, however, that if one peers more closely then the two situations are really quite different. Our first result is that not only is $\nu(f_{A_c}) > 0$ (which was previously unknown), but in fact particles may stay far above the curve $f_{A_c}$. Secondly, we are able to give much finer asymptotics for $\Lambda(t)$, both in distribution and almost surely. These developments are redolent of the results of Bramson \cite{bramson:convergence_Kol_eqn_trav_waves} and Hu and Shi \cite{hu_shi:minimal_pos_crit_mg_conv_BRW}. The proofs have certain elements in common with those found in \cite{roberts:simple_path_BBM}, but are decidedly more involved, and we must develop several new techniques along the way.

We now state our three main theorems, which make precise the discussion above.

\begin{thm}\label{jaff}
Let $A_c = 3^{4/3}\pi^{2/3}2^{-7/6}$. Define $g:[0,\infty)\to\R$ by setting
\[g(t) = \sqrt2 t - A_c t^{1/3} + \frac{A_c t^{1/3}}{\log^2(t+e)} - 1.\]
Then
\[\P(\forall t\geq0, \exists v\in N(t) : X_v(u)>g(u) \hs \forall u\leq t) > 0.\]
\end{thm}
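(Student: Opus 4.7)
The plan is to build, with positive probability, an infinite genealogical line of particles whose paths all remain above $g$. This will be done via a first/second-moment analysis at a large fixed time $T$, iterated on a geometric schedule using the Markov property of BBM.

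\textbf{Reduction to a Brownian barrier problem.} For $T$ large and a window width $L=L(T)$ to be tuned, set
\[\mathcal N_T:=\#\bigl\{v\in N(T):X_v(s)>g(s)\,\forall s\leq T,\,X_v(T)\in[g(T)+L,g(T)+2L]\bigr\}.\]
The many-to-one lemma and the Girsanov change of measure removing the drift $\sqrt 2$ reduce $\E[\mathcal N_T]$, up to a multiplicative exponential prefactor that the choice of $A_c$ will neutralise, to the probability that a standard BM $\tilde W$ under $\Pt$ stays above $\psi(s):=-A_c s^{1/3}+A_c s^{1/3}/\log^2(s+e)-1$ on $[0,T]$ and ends in $[\psi(T)+L,\psi(T)+2L]$. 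The critical barrier $\psi_0(s):=-A_c s^{1/3}-1$, via the space--time scaling $(s,x)\mapsto(\lambda^3 s,\lambda^{3/2}x)$, reduces to an Airy eigenvalue problem on the half-line: the value $A_c=3^{4/3}\pi^{2/3}2^{-7/6}$ is exactly the constant at which the exponential factors cancel and the first moment is of polynomial order in $T$. The perturbation $h(s):=A_c s^{1/3}/\log^2(s+e)$ is then incorporated via a further Girsanov shift of $\tilde W$ by $-h$: both $\int_0^T h'(s)^2\,ds$ and the boundary contribution $h'(T)\tilde W_T$ are $O(1)$ uniformly in $T$, so the shift only multiplies $\E[\mathcal N_T]$ by a bounded factor. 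Taking $L$ to grow as a small positive power of $T$ then gives $\E[\mathcal N_T]\to\infty$.

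\textbf{Second moment and iteration.} To convert the first-moment bound into $\P(\mathcal N_T\geq 1)\geq c>0$, I would compute $\E[\mathcal N_T^2]$ via a many-to-two decomposition over the time $\sigma$ at which two particles last share an ancestor: applying the barrier estimate once on $[0,\sigma]$ and twice independently on $[\sigma,T]$ yields $\E[\mathcal N_T^2]\leq C\,\E[\mathcal N_T]^2$, and Paley--Zygmund produces a surviving cluster of at least $c\E[\mathcal N_T]$ particles in the window at time $T$, with probability at least $c'>0$. For infinite survival I would iterate on a geometric schedule $T_n:=2^n$, applying the Markov property at each $T_n$ to each surviving particle. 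The barrier $s\mapsto g(T_n+s)-X_v(T_n)$ seen from such a particle is dominated by a curve of the same functional form on $[0,T_{n+1}-T_n]$, with a cumulative cost across iterations of order $\sum_n 1/\log^2(T_n)=O(\sum_n n^{-2})<\infty$. The resulting process of surviving lines stochastically dominates a supercritical Galton--Watson tree, which survives with positive probability.

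\textbf{Main obstacle.} The technical core is the sharp Brownian barrier estimate: one needs asymptotics for constrained Brownian motion under a $t^{1/3}$-barrier that are uniform in starting and ending positions at scale $T^{1/3}$, and precise enough to detect a $\log^{-2}$ perturbation. The exponent $2$ in $\log^{-2}(t+e)$ is sharp in that it is the smallest exponent for which the cumulative iteration cost is summable --- with $\log^{-1}$ the summation diverges and the branching structure cannot be sustained --- just as Bramson's $\tfrac{3}{2\sqrt 2}\log t$ correction for $M(t)$ sits on the boundary of feasibility.
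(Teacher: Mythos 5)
Your overall strategy --- many-to-one/many-to-two, Girsanov to remove the drift, a barrier estimate, and a second-moment argument --- is the right family of tools, and the Airy/eigenvalue heuristic for $A_c$ is correct. However, there are two substantive gaps, and one place where you work much harder than necessary.

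\textbf{Gap 1: no upper barrier at intermediate times.} Your $\mathcal N_T$ only constrains the path to stay above $g$ on $[0,T]$ and to land in a window at time $T$. With this definition the claim $\E[\mathcal N_T^2]\leq C\,\E[\mathcal N_T]^2$ is false: the many-to-two integral over the split time $\sigma$ is dominated by ancestral paths that drift far above $g$ early, since those spawn, at essentially no extra exponential cost, many independent descendants that trivially stay above $g$. This is exactly the phenomenon the paper warns about when it introduces an upper barrier $f+L$ for \emph{all} $s\leq t$, not just at $s=t$. Moreover, even imposing the two-sided tube is not enough on its own: the paper further restricts to particles ending near the \emph{top} of the tube, because otherwise the second moment is dominated by paths that touch the top early and then sink to the bottom by time $t$ --- which give no sustainable lineage. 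Your window $[g(T)+L,g(T)+2L]$ with $L$ a ``small positive power of $T$'' sits far below the top of the natural $T^{1/3}$-width tube, and does not play this role.

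\textbf{Gap 2: the tube width must be tuned to second order.} The mechanism by which $\log^{-2}$ enters is not a summability condition over a geometric schedule. In the paper one takes $L(t)=\alpha(t+e)^{1/3}+\beta(t+e)^{1/3}/\log(t+e)$ and expands the exponent $\ec(f,L,t)$ in powers of $t^{1/3}/\log^k t$. With $\alpha=\beta=3^{1/3}\pi^{2/3}2^{-1/6}$ the coefficients of $t^{1/3}$, $t^{1/3}/\log t$ \emph{and} $t^{1/3}/\log^2 t$ all vanish simultaneously, leaving a positive residual $\gamma\,t^{1/3}/\log^3 t$ in the exponent of the first moment, which makes $\E[\tilde N(t)]\to\infty$ while the second moment satisfies $\E[\tilde N(t)^2]\lesssim\E[\tilde N(t)]+\E[\tilde N(t)]^2$. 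This three-term cancellation is the real reason the $\log^{-2}$ correction in $g$ is exactly the right size; a heuristic of the form $\sum_n \log^{-2}T_n<\infty$ does not capture it and would not identify the constant in front of the $\log^{-2}$ term.

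\textbf{Unnecessary complication: no iteration is needed.} The events $E_t:=\{\exists v\in N(t): X_v(s)>f(s)\;\forall s\leq t\}$ are decreasing in $t$ (the ancestor of a surviving particle at time $t'$ survives at any earlier time $t$). Once Paley--Zygmund gives $\P(E_t)\geq\P(\tilde N(t)\geq 1)\geq c$ with $c$ independent of $t$, the monotone limit yields $\P(\bigcap_t E_t)\geq c>0$ directly. Your geometric-schedule Galton--Watson construction is not wrong in spirit, but it is not needed here, and making it rigorous would require tracking how the window and barrier transport under the Markov property at each $T_n$, which introduces exactly the kind of cumulative bookkeeping you then try to control with the $\sum\log^{-2}T_n$ argument.
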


\begin{thm}\label{bram}
Let $a_c = 3^{1/3}\pi^{2/3}2^{-1/2}$. Then there exist constants $c_1,c_2>0$ such that for $z \in[1,a_c t^{1/3}/2]$,
\[c_1 ze^{-\sqrt2 z} \leq P(\Lambda(t) \leq a_c t^{1/3} - z) \leq c_2 ze^{-\sqrt2 z}.\]
\end{thm}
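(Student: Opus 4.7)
The probability $\P(\Lambda(t)\leq a_c t^{1/3}-z)$ is the probability that some particle $v\in N(t)$ has $X_v(s)\geq \sqrt2 s - L$ for every $s\in[0,t]$, where $L:=a_c t^{1/3}-z$. After the translation $Y_v(s):=X_v(s)-\sqrt2 s+L$, this becomes the survival probability to time $t$ of a BBM with drift $-\sqrt2$, started from $L$, above an absorbing barrier at $0$. Writing $u(L,t)$ for this quantity, the theorem asserts a Bramson-style leading-edge profile $u(L,t)\asymp ze^{-\sqrt2 z}$, which suggests a first- and second-moment argument on a carefully designed counting functional.

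For both bounds I would introduce $N_t^\star$, the number of particles whose $Y$-trajectory stays in a curved tube $[0,\varphi(s,t)]$ for every $s\leq t$, with $\varphi(\cdot,t)$ tent-like, vanishing at $s\in\{0,t\}$ and of order $t^{1/3}$ in the bulk. By the many-to-one lemma and a Girsanov shift removing the drift,
\[\E[N_t^\star]=\E\bigl[e^{-\sqrt2 W_t}\,\ind_{\{W_s\in[0,\varphi(s,t)]\,\forall\,s\leq t\}}\bigr],\]
where $W$ is a standard Brownian motion from $0$. Sharp asymptotics via a Sturm--Liouville / Airy-function analysis of killed Brownian motion in this tube should give $\E[N_t^\star]\asymp ze^{-\sqrt2 z}$: the $e^t$ factor from many-to-one balances the tube survival probability, with $a_c$ appearing as the principal Airy eigenvalue scaled appropriately, and the leading-edge factor $ze^{-\sqrt2 z}$ arising from the Girsanov tilt together with a ballot-type ``entropic repulsion'' factor at the lower endpoint of the tube.

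The upper bound then follows from the decomposition
\[\P(\Lambda(t)\leq a_c t^{1/3}-z)\leq \P(N_t^\star\geq 1)+\P\bigl(\exists\,s\leq t,\,v\in N(s):X_v(s)>\sqrt2 s-L+\varphi(s,t)\bigr),\]
the first term bounded by $\E[N_t^\star]$ via Markov, the second by classical BBM maximum tail estimates once $\varphi$ is chosen to dominate the Bramson correction $\tfrac{3}{2\sqrt2}\log s$ throughout the interior. For the lower bound, Cauchy--Schwarz gives $\P(N_t^\star\geq 1)\geq \E[N_t^\star]^2/\E[(N_t^\star)^2]$, and I would evaluate the second moment by partitioning pairs $(v,w)$ according to the splitting time $s$ of their most recent common ancestor, bounding each contribution by one tube-survival probability on $[0,s]$ times two independent tube-survival probabilities on $[s,t]$. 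The narrowness of the tube suppresses pairs whose common ancestor splits late, giving $\E[(N_t^\star)^2]=O(\E[N_t^\star])$ and hence $\P(N_t^\star\geq 1)=\Omega(\E[N_t^\star])=\Omega(ze^{-\sqrt2 z})$.

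The main obstacle is the choice and analysis of the tube function $\varphi(s,t)$: it must be wide enough that on $\{\Lambda(t)\leq L\}$ at least one ``achieving'' particle stays inside with non-negligible conditional probability, yet narrow enough that the truncated first moment is actually of order $ze^{-\sqrt2 z}$ rather than the much larger naive first moment $\sim L e^{\sqrt2 L}/t^{3/2}$. Matching the constants uniformly in $z\in[1,a_c t^{1/3}/2]$ will require sharp Brownian tube estimates of Airy / Sturm--Liouville type, building on the techniques of \cite{roberts:simple_path_BBM}.
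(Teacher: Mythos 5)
Your upper bound is in the right spirit (the paper's upper bound also decomposes into ``escape out of the top of the tube at some time'' plus ``stay in the tube until $t$'', and estimates the escape probability by a many-to-one computation over discretized escape times), but your lower bound has a genuine and, as it turns out, previously documented gap: the truncated first--second moment argument on tube-staying particles does not give the claimed order. The paper states this explicitly: ``we have not succeeded in finding an appropriate $L_t$; it turns out that for most seemingly sensible choices, the probability of staying within the $L_t$-tube is significantly smaller than the probability of staying above $\sqrt2 u - x$ until time $t$.'' Concretely, with the critically tuned tube $L_t(u)=a(t+1-u)^{1/3}$, the many-to-one first moment of the number of particles staying in the tube up to time $t$ and ending near the top is $e^t p(t,z;t,0)\asymp ze^{-\sqrt2 z}t^{-1/2}$ (Lemma \ref{1pb}), which is off by a factor $t^{-1/2}$; integrating over the endpoint position at time $t$ does not rescue this since $L_t(t)=O(1)$. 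So $\E[N_t^\star]\lesssim ze^{-\sqrt2 z}t^{-1/2}$, not $\asymp ze^{-\sqrt2 z}$, and Cauchy--Schwarz on $N_t^\star$ can only deliver a lower bound that decays polynomially in $t$. No retuning of the tube width resolves this: a wider tube restores the first moment but destroys the second-moment control, which is exactly the tension you flag at the end but do not resolve.

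The paper's fix, which is absent from your proposal, is a two-stage argument. First, rather than counting particles that stay in the tube until time $t$, one counts particles that hit the \emph{top} of the tube at some intermediate time $s\in[t/3,2t/3]$ while having stayed inside until then (the set $J_{t,z}$, analysed via the discretized count $\tilde J_{t,z}=\sum_{j\in\mathcal N_t}\#M_{t,z}(j)$). This quantity has first moment $\asymp ze^{-\sqrt2 z}$ and second moment of the same order (Lemmas \ref{1mom}, \ref{2mom}, \ref{Mlem}), so $\P(J_{t,z}\neq\emptyset)\gtrsim ze^{-\sqrt2 z}$. Second --- and this is the essential borrowed idea from Berestycki, Berestycki and Schweinsberg --- Proposition \ref{ppp} shows that a particle hitting the top of the tube at time $s$ has a $t$-independent positive probability that some descendant stays above the lower barrier for the remaining time $t-s$. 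The proof of Proposition \ref{ppp} is not a moment computation at all but a genealogical one: using Neveu's result (Lemma \ref{neveu}) that the number of particles absorbed on a critical line has a heavy-tailed limit $W$ with $\E[W]=\infty$, one builds a supercritical Galton--Watson process whose generations alternate between hitting the top of the tube and producing many descendants a fixed distance below it; supercriticality gives a uniform survival probability. Both of these ingredients --- counting top-of-tube hits at intermediate times rather than full-tube survival, and the Galton--Watson bootstrap from a single hit to survival --- are needed, and neither appears in your plan.

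As a smaller point, your change of variable $Y_v(s)=X_v(s)-\sqrt2 s+L$ gives $Y_v(0)=L=a_ct^{1/3}-z>0$, so a tube $[0,\varphi(s,t)]$ with $\varphi(0,t)=0$ excludes the starting particle; the tube you want must open at width $\approx L$ at $s=0$. This is consistent with the paper's choice $L_t(u)=a(t+1-u)^{1/3}$, which is large at $u=0$ and $O(1)$ at $u=t$, rather than tent-like.
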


Given this result, it is an easy exercise to prove tightness of $\Lambda(t)-a_c t^{1/3}$, and one might suspect that $\Lambda(t) - a_c t^{1/3}$ converges in distribution as $t\to\infty$. This question remains open. However, we are able to show that there are rare times when $\Lambda(t)$ is a long way from $a_c t^{1/3}$.

\begin{thm}\label{hush}
$\Lambda(t)-a_c t^{1/3}$ fluctuates on the logarithmic scale: almost surely
\[\limsup_{t\to\infty} \frac{\Lambda(t)-a_c t^{1/3}}{\log t} =0\]
but
\[\liminf_{t\to\infty} \frac{\Lambda(t)-a_c t^{1/3}}{\log t} =-1/3\sqrt2.\]
\end{thm}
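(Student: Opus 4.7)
The theorem asserts four bounds---$\limsup\le 0$, $\limsup\ge 0$, $\liminf\ge -1/(3\sqrt 2)$, and $\liminf\le -1/(3\sqrt 2)$---that I would attack separately, using Theorem~\ref{bram} as the principal input together with two structural facts. First, $t\mapsto\Lambda(t)$ is non-decreasing: every descendant $v\in N(t+h)$ of an ancestor $u\in N(t)$ inherits $\sup_{s\le t}(\sqrt 2 s-X_u(s))$ as a lower bound for its own supremum, and the minimum ranges over a larger family. Second, at any splitting time $s$ one has $\sup_{s'\le t}(\sqrt 2 s'-X_v(s'))=\max\bigl(S_u,\,\sqrt 2 s-X_u(s)+\tilde\Lambda_u^v(t-s)\bigr)$ for $v\prec u\in N(s)$, where $S_u=\sup_{s'\le s}(\sqrt 2 s'-X_u(s'))$ depends only on the past of $u$ and $\tilde\Lambda_u^v$ only on the subtree rooted at $u$ after time $s$; the subtrees are independent given $\mathcal F_s$. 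All subsequential arguments take $t_n=n^3$, since this places the tail $ze^{-\sqrt 2 z}$ on the borderline of summability exactly at $z=(\log t_n)/(3\sqrt 2)$, while $t_{n+1}^{1/3}-t_n^{1/3}=1$ is negligible on the $\log t$ scale.

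For $\liminf\ge -1/(3\sqrt 2)$, set $z_n=(1/(3\sqrt 2)+\eps)\log t_n$ in Theorem~\ref{bram}: the bound is $O((\log n)\,n^{-1-3\sqrt 2\eps})$, summable, so first Borel--Cantelli together with monotonicity gives $\Lambda(t)\ge\Lambda(t_n)\ge a_c t^{1/3}-(1/(3\sqrt 2)+2\eps)\log t$ eventually. For $\limsup\ge 0$, Theorem~\ref{bram} yields $\P(\Lambda(t)<a_c t^{1/3}-\eps\log t)\to 0$, and hence $\P(\forall t\ge T:\Lambda(t)<a_c t^{1/3}-\eps\log t)\le\inf_{t_0\ge T}\P(\Lambda(t_0)<a_c t_0^{1/3}-\eps\log t_0)=0$; intersecting over $T\in\mathbb N$ and $\eps=1/k$ delivers $\Lambda(t)\ge a_c t^{1/3}-\eps\log t$ infinitely often a.s. For $\limsup\le 0$ I would first produce an upper-tail estimate of the form $\P(\Lambda(t)>a_c t^{1/3}+C)\le\exp(-Ce^{t^{2/3}})$ by splitting at $s=t^{2/3}$ and arguing that each of the $\Theta(e^s)$ independent subtrees has probability bounded below---by Theorem~\ref{bram}'s lower bound applied inside the subtree---of producing a descendant with $\Lambda(t)\le a_c t^{1/3}+O(1)$, once one has shown that many typical ancestors $u$ with $S_u=O(s^{1/3})$ and $X_u(s)\approx\sqrt 2 s$ exist. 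This bound is trivially summable over $t_n$, and monotonicity again interpolates.

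The main obstacle is $\liminf\le -1/(3\sqrt 2)$. The natural events $B_n=\{\Lambda(t_n)\le a_c t_n^{1/3}-(1/(3\sqrt 2)-\eps)\log t_n\}$ have probability $\gtrsim(\log n)\,n^{-1+3\sqrt 2\eps}$ by Theorem~\ref{bram}, which diverges when summed, but the $B_n$ are strongly correlated and first Borel--Cantelli is useless. My plan is to engineer independence along a geometric sequence $t_n=K^n$ by choosing, at each time $t_{n-1}$, a distinguished particle $u_n\in N(t_{n-1})$ whose history hugs the critical line, with $S_{u_n}=o(\log t_n)$ and $\sqrt 2 t_{n-1}-X_{u_n}(t_{n-1})=o(\log t_n)$---many such particles exist by the frontier controls underlying Theorem~\ref{jaff}---and defining $\tilde B_n$ to be the event that the subtree generated by $u_n$ over $[t_{n-1},t_n]$ contains some $v$ with $\tilde\Lambda_{u_n}^v(t_n-t_{n-1})\le a_c(t_n-t_{n-1})^{1/3}-(1/(3\sqrt 2)-\eps)\log t_n$. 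Since $t_n-t_{n-1}\asymp t_n$, Theorem~\ref{bram} applied inside this independent subtree gives $\P(\tilde B_n\mid\mathcal F_{t_{n-1}})\gtrsim(\log n)\,n^{-1+3\sqrt 2\eps}$ on a favourable event of positive probability, and the $(\tilde B_n)$ concern disjoint time windows, so they are conditionally independent. A conditional second Borel--Cantelli lemma then yields $\tilde B_n$ infinitely often, and the decomposition of the first paragraph together with the $o(\log t_n)$ bounds on $S_{u_n}$ and $\sqrt 2 t_{n-1}-X_{u_n}(t_{n-1})$ converts this into $\Lambda(t_n)\le a_c t_n^{1/3}-(1/(3\sqrt 2)-2\eps)\log t_n$ i.o.; sending $\eps\downarrow 0$ then finishes. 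The hardest step is the construction of $(u_n)$: one must show that at each $t_{n-1}$ there is a particle with sufficient joint control on its final position and entire historical delay, and that selecting it measurably with respect to $\mathcal F_{t_{n-1}}$ does not degrade the subtree's law so badly that Theorem~\ref{bram}'s lower bound ceases to apply---likely via a change-of-measure or spine argument.
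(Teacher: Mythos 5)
Your decomposition into four sub-claims is sound, and two of your arguments are actually cleaner than the paper's: the $\liminf\ge -1/(3\sqrt2)$ direction via monotonicity of $t\mapsto\Lambda(t)$, the choice $t_n=n^3$, and a first Borel--Cantelli application of the upper bound of Theorem~\ref{bram} works, as does the one-line observation for $\limsup\ge 0$ (the paper instead uses, respectively, an integral trick $\P(I_n>0)\le\E[I'_n]/\E[I'_n\mid I_n>0]$ and a conditioning argument along doubly-exponential times, both of which your arguments sidestep). The $\limsup\le 0$ sketch, however, has a wrong choice of split time: at $s=t^{2/3}$ the particle positions are spread over a window of width $\sim\sqrt2\,t^{2/3}\gg t^{1/3}$, so only $O(1)$ particles sit within $O(\log t)$ of the frontier $\sqrt2 s$, and your claimed $\Theta(e^s)$ independent good subtrees simply do not exist. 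The paper splits at $s=\eps\log t$, where \emph{every} particle is within $\sqrt2\eps\log t\ll t^{1/3}$ of the origin and hence good, there are $\ge t^{\eps/2}$ of them, and each subtree succeeds with probability $\ge c$, giving the (much weaker, but sufficient) bound $(1-c)^{t^{\eps/2}}$.

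The serious gap is in $\liminf\le -1/(3\sqrt2)$, which the paper flags as the genuinely novel step. Three things go wrong. (i) The requirement $S_{u_n}=o(\log t_n)$ is impossible: since $\Lambda(t)\to\infty$ like $a_c t^{1/3}$, every particle alive at $t_{n-1}$ satisfies $S_{u_n}\ge\Lambda(t_{n-1})\sim a_c t_{n-1}^{1/3}\gg\log t_n$ for either choice of $t_n$; no particle ``hugs the critical line''. (ii) Your probability estimate $\P(\tilde B_n\mid\mathcal F_{t_{n-1}})\gtrsim(\log n)n^{-1+3\sqrt2\eps}$ is computed as if $\log t_n\asymp\log n$, which requires $t_n$ polynomial, yet you also invoke $t_n-t_{n-1}\asymp t_n$, which requires $t_n$ geometric. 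With $t_n=K^n$ one gets $\P(\tilde B_n\mid\cdot)\lesssim nK^{-n(1/3-\sqrt2\eps)}$, which is summable, and Lévy's extension of Borel--Cantelli then gives $\tilde B_n$ only \emph{finitely} often. With $t_n=n^3$ the sum diverges but $t_n-t_{n-1}\sim 3n^2\ll t_n$, so the ancestor's accumulated delay $S_{u_n}\ge\Lambda(t_{n-1})\approx a_c(n-1)$ already dominates the budget $a_c n - c\log n$, and one is forced to condition on $\Lambda(t_{n-1})$ being exceptionally small --- precisely the rare event one is trying to produce, so the argument becomes circular. (iii) The real obstruction, which the paper's heuristic discussion in Section~\ref{hush_proof} is about, is that the events $\{\Lambda(t)\text{ is small}\}$ are strongly correlated over windows of length $\sim t^{2/3}$; a naive second-moment or conditional-Borel--Cantelli attack over-counts. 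The paper's resolution is a second-moment computation on the carefully sparsified sum $S_n=\sum_{t\in K_n}\sum_{j\in D_t}\#\mathcal A_t(j)$, where $K_n$ samples $[n,2n]$ at spacing $n^{2/3}$ (exactly the decorrelation length), then Cauchy--Schwarz to get $\P(S_n>0)\gtrsim 1$, and finally an independent-subtrees argument at time $\eps\log n$ (as in Lemma~\ref{exist_ev}, \emph{not} along a geometric time sequence) to upgrade this to almost sure infinitely-often. None of your proposed machinery substitutes for this.
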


\begin{rmk}
Initially we proved a weaker form of Theorem \ref{bram}, and as a result gained a weaker statement in place of Theorem \ref{hush}. A few months after this article first appeared online, Berestycki, Berestycki and Schweinsberg published an article \cite{berestycki_et_al:critical_bbm_absorp_sp} which revealed the correct asymptotics seen in our new Theorem \ref{bram}. The key to finding the correct answer lies in Proposition 20 of \cite{berestycki_et_al:critical_bbm_absorp_sp}. Other than that, our methods were developed independently. Even given Theorem \ref{bram}, new ideas are required to deduce the stated version of Theorem \ref{hush}.

No upper bound is given in Theorem \ref{jaff}, and the only rigorous upper bound we know of is that given by Jaffuel \cite{jaffuel:crit_barrier_brw_absorption}. In fact, as mentioned above, Jaffuel considered branching random walks, but it is not difficult either to adapt his proof, or to apply his result together with standard tightness properties of Brownian motion, to achieve the same upper bound for BBM. A finer upper bound appears to be difficult to prove.

The above results are stated only for standard BBM. There exist techniques for transferring the proofs to other cases, where for example each particle might give birth to a random number of new particles when it splits. In order to apply our methods we must, however, insist that the distribution of this random number has a finite second moment.
\end{rmk}

\subsection{Layout of the article}
Our main tactic will be to develop detailed estimates for a single Brownian motion, and then to apply standard branching tools to deduce results for the branching system. In Section \ref{jaff_est} we develop our main single-particle estimates, on the probability that a Brownian motion stays within a tube about a function $f$. These will then be used to prove both Theorem \ref{jaff} in Section \ref{jaff_proof} and Theorem \ref{bram} in Section \ref{bram_proof}. Finally, in Section \ref{hush_proof} we apply Theorem \ref{bram}, together with estimates developed in previous sections, to prove Theorem \ref{hush}.

\subsection{Notation}
For $\alpha\in(0,\infty)$ we use the notation $g(t) \asymp_{\alpha} h(t)$ to mean that there exist constants $c(\alpha), C(\alpha)\in (0,\infty)$ depending only on $\alpha$ such that $c(\alpha)g(t)\leq h(t)\leq C(\alpha)g(t)$ for all $t$ in the specified range. Similarly, when we write $g(t) \lesssim_{\alpha} h(t)$ we mean that there exists a constant $C(\alpha)$ depending only on $\alpha$ such that $g(t)\leq C(\alpha) h(t)$ for all $t$ in the specified range. Use of this notation without a specified parameter $\alpha$ ({\em i.e.}~$\asymp$ and $\lesssim$ rather than $\asymp_\alpha$ and $\lesssim_\alpha$) means that the constants are absolute.

For $\gamma\in\R$ and $x>0$, we write $\log^\gamma x$ to mean $(\log x)^\gamma$.

Throughout the article we shall have two twice continuously differentiable functions $f:[0,\infty)\to\R$ and $L:[0,\infty)\to(0,\infty)$, such that $f(0) = -x <0$ and $f(0)+L(0)>0$. We suppose that under $\P$, as well as our BBM we have an independent Brownian motion $\xi_t$, $t\geq0$ started from $0$. Write $(\G_t, t\geq0)$ for the natural filtration of $\xi_t$. We define
\[\ec(f,L,t) = \frac12 \int_0^t f'(s)^2 ds + \int_0^t \frac{\pi^2}{2L(s)^2}ds + f'(t)L(t) + f'(0)f(0) + \frac12 \log L(0) - \frac12 \log L(t).\]

We often assume that there exists a constant $\bar L>0$ such that
\[|L'(0)|L(0) + |L'(t)|L(t) + \int_0^t |L''(s)|L(s) ds + \int_0^t |f''(s)|L(s)ds - |L'(0)|f(0) \leq \bar L\]
for all $t$, which we call assumption (A).

\section{Single-particle estimates}\label{jaff_est}
In this section we are interested in estimating the probability that a Brownian motion stays close to a function $f$. For $\eps>0$ define
\[\rho_L(\eps) = \inf\left\{t>0 : \int_0^t \frac{1}{L(s)^2}ds > \eps\right\}.\]
Our main aim for the section is to prove the following result.

\begin{prop}\label{qasymp}
Assume (A). If $f'(t)\geq0$ then for any $\eps>0$, $t\geq \rho_L(\eps)$ and $0\leq p<q\leq1$,
\begin{multline*}
e^{-\ec(f,L,t)+(1 - q)f'(t)L(t)}\sin\left(\frac{\pi x}{L(0)}\right)\int_p^q \sin(\pi \nu) d\nu\\
\lesssim_{\bar L,\eps}\P\left(\xi_s-f(s)\in (0, L(s)) \hs \forall s\leq t, \hs \xi_t - f(t) \in \left(pL(t),qL(t)\right)\right)\\
\lesssim_{\bar L,\eps} e^{-\ec(f,L,t) + (1-p)f'(t)L(t)}\sin\left(\frac{\pi x}{L(0)}\right)\int_p^q \sin(\pi \nu) d\nu.
\end{multline*}
If $f'(t)\leq0$ then the inequalities are reversed.
\end{prop}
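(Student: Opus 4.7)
The plan is to apply Girsanov's theorem to reduce to the probability that a driftless Brownian motion stays inside a tube of varying width $L(s)$ and lands in a prescribed sub-interval at time $t$. Let $\Pt$ be the measure under which $\txi_s := \xi_s - f(s)$ is a standard Brownian motion started at $\txi_0 = -f(0) = x$. Writing $d\xi_s = d\txi_s + f'(s)\,ds$, the Radon--Nikodym derivative is
\[\frac{d\P}{d\Pt}\bigg|_{\G_t} = \exp\!\left(-\int_0^t f'(s)\,d\txi_s - \tfrac12\int_0^t f'(s)^2\,ds\right),\]
and Itô integration by parts recasts the stochastic integral as $f'(t)\txi_t - f'(0)x - \int_0^t f''(s)\,\txi_s\,ds$.

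On the event $\{\txi_s\in(0,L(s))\ \forall s\le t\}$, assumption~(A) yields $|\int_0^t f''(s)\,\txi_s\,ds|\le\bar L$, which contributes a multiplicative constant absorbed into $\lesssim_{\bar L,\eps}$. The term $-f'(0)x = f'(0)f(0)$ matches the corresponding entry of $\ec$. On the further event $\{\txi_t\in(pL(t),qL(t))\}$ the boundary term $-f'(t)\txi_t$ lies between $-qf'(t)L(t)$ and $-pf'(t)L(t)$ when $f'(t)\ge 0$; combined with the $+f'(t)L(t)$ entry of $\ec$, this produces the asymmetric factors $(1-p)f'(t)L(t)$ (upper bound) and $(1-q)f'(t)L(t)$ (lower bound). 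When $f'(t)\le 0$ the extremes of $-f'(t)\txi_t$ on the event swap, producing the stated reversal.

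It then remains to show, uniformly in $\bar L$ and $\eps$, that for $t\ge\rho_L(\eps)$
\begin{multline*}
\Pt\!\left(\txi_s\in(0,L(s))\ \forall s\le t,\ \txi_t\in(pL(t),qL(t))\right) \\
\asymp_{\bar L,\eps}\sqrt{L(t)/L(0)}\,\sin(\pi x/L(0))\int_p^q\!\sin(\pi\nu)\,d\nu\,\exp\!\left(-\int_0^t\!\frac{\pi^2}{2L(s)^2}\,ds\right).
\end{multline*}
If $L$ were constant this would be immediate from the Dirichlet heat kernel on $(0,L)$, whose $n$-th eigenmode decays like $e^{-n^2\pi^2 t/(2L^2)}$; the hypothesis $t\ge\rho_L(\eps)$ ensures $\int_0^t 1/L(s)^2\,ds\ge\eps$ of effective Dirichlet time, so higher modes are damped by at least $e^{-3\pi^2\eps/2}$ relative to the first and may be absorbed into the implicit constants. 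For varying $L$, one spatially rescales $Y_s:=\txi_s/L(s)$ (bringing the tube to constant width $(0,1)$) and removes the resulting $L'/L$-drift by a further Girsanov transformation; integration by parts on this drift generates the boundary term $\tfrac12\log(L(t)/L(0))$, while the bulk corrections involving $L''$ are controlled precisely by the $\int|L''|L$ and $|L'|L$ terms of assumption~(A).

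The main obstacle is this adiabatic tube estimate: upgrading the WKB-type approximation to a quantitative two-sided bound with errors depending only on $\bar L$ and $\eps$, uniformly in $p$, $q$ and in the function $L$. Once it is in hand, integrating the rank-one approximation of the killed heat kernel, namely $\frac{2}{\sqrt{L(0)L(t)}}\sin(\pi x/L(0))\sin(\pi y/L(t))$, over $y\in(pL(t),qL(t))$ produces the factor $\int_p^q\sin(\pi\nu)\,d\nu$ together with the prefactor $\sqrt{L(t)/L(0)}$, and combining with the Girsanov exponentials matches $-\ec(f,L,t)$ plus the asymmetric $(1-p)/(1-q)$ correction exactly.
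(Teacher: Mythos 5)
Your proposal is correct and follows essentially the same route as the paper: a Girsanov change of measure removing the drift $f'$, integration by parts plus assumption (A) to control the stochastic integral and extract the $f'(0)f(0)$ and asymmetric $(1-p)/(1-q)$ boundary terms, and then a reduction of the variable-width driftless tube to the fixed-width case. The ``adiabatic tube estimate'' you isolate as the remaining obstacle is exactly the paper's Lemma \ref{pasymp}, which is proved by precisely the mechanism you sketch (Novikov's rescaling $U_t = K(t)\int_0^t K(s)^{-1}d\xi_s + K(t)z$, Girsanov for the induced $K'/K$-drift, integration by parts yielding the $\tfrac12\log(L(t)/L(0))$ factor, Dubins--Schwarz to the clock $\int_0^t L(s)^{-2}ds \ge \eps$, and the classical fixed-width estimate of Lemma \ref{fellerlem}).
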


\begin{rmk}
Often we will fix a particular $f$ and $L$, and be interested in particles near $f(t)+L(t)$ at a particular time $t$. For example, if we want to be within a constant distance $C$ of $f(t)+L(t)$, and $L(0),L(t)\geq C$, then both the upper and lower bounds above collapse to
\[\P\left(\xi_s-f(s)\in (0, L(s)) \hs \forall s\leq t, \hs \xi_t - f(t) \in \left(L(t)-C,L(t)\right)\right) \asymp_{\bar L,\eps, C} e^{-\ec(f,L,t)} \frac{1}{L(0)}\frac{1}{L(t)^2}.\]
\end{rmk}

In order to prove Proposition \ref{qasymp}, we begin with a well-known estimate for the probability that a Brownian motion stays within a tube of fixed width for a long time.

\begin{lem}\label{fellerlem}
For any $\eps>0$, for all $t\geq \eps$ and $y\in(-1,1)$, $-1\leq p<q\leq1$,
\[\P(|y+\xi_s|< 1 \hs\forall s\leq t, \hs y+\xi_t \in (p,q)) \asymp_\eps e^{-\pi^2 t/8}\cos\left(\frac{\pi y}{2}\right)\int_p^q \cos\left(\frac{\pi\nu}{2}\right)d\nu.\]
\end{lem}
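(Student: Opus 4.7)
The left-hand side equals $\int_p^q q_t(y,z)\,dz$, where $q_t(y,z)$ is the transition density of Brownian motion killed upon first exit from $(-1,1)$. Separating variables in the Dirichlet heat equation $\partial_t u = \tfrac12 u''$ on $(-1,1)$ yields the classical spectral expansion
\[
q_t(y,z) \;=\; \sum_{n=1}^\infty e^{-\pi^2 n^2 t/8}\, \phi_n(y)\,\phi_n(z), \qquad \phi_n(x) = \sin\!\left(\tfrac{n\pi(x+1)}{2}\right),
\]
with $\{\phi_n\}$ orthonormal in $L^2(-1,1)$. Since $\phi_1(x) = \cos(\pi x/2)$, the $n=1$ contribution to $\int_p^q q_t(y,z)\,dz$ is exactly the expression on the right-hand side of the claimed estimate, so the task reduces to showing that the $n\geq 2$ terms are negligible.

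Here I would use the elementary inequality $|\sin(n\theta)|\leq n|\sin\theta|$ (valid for $n\in\mathbb{N}$, $\theta\in\R$), which gives the pointwise bound $|\phi_n(x)|\leq n\,\phi_1(x)$ on $[-1,1]$. It follows that
\[
\left|\int_p^q q_t(y,z)\,dz - e^{-\pi^2 t/8}\,\phi_1(y)\!\int_p^q \phi_1(z)\,dz\right| \;\leq\; \phi_1(y)\!\int_p^q\phi_1(z)\,dz \cdot \sum_{n=2}^\infty n^2 e^{-\pi^2 n^2 t/8}.
\]
For $t\geq\eps$ the tail sum is bounded by a constant $C(\eps)<\infty$, giving the upper bound immediately. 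Choosing $t_0=t_0(\eps)$ so large that $\sum_{n\geq 2}n^2 e^{-\pi^2(n^2-1)t/8}\leq\tfrac12$ for $t\geq t_0$ also yields the lower bound in the regime $t\geq t_0$.

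The remaining range $\eps\leq t\leq t_0(\eps)$ is the delicate part, since there the pointwise tail estimate above is not sharp enough to guarantee a lower bound on its own. My plan is to apply the Markov property at time $\eps/2$, reducing to the long-time regime already handled, provided one can establish the short-time comparison $q_{\eps/2}(y,w)\gtrsim_\eps \phi_1(y)\phi_1(w)$. This last inequality is a form of intrinsic ultracontractivity of the killed semigroup and can be proved directly from the spectral expansion by pairing consecutive eigenfunction terms using sum-to-product identities, or invoked from standard semigroup theory. The main obstacle is precisely this short-time lower bound: at times of order $\eps$ no single term in the spectral sum dominates, and the factors $\cos(\pi y/2)$ and $\cos(\pi z/2)$ on the right vanish at the boundary at exactly the same rate as $\phi_1$, so any loss of the boundary-vanishing behaviour in the tail estimate would destroy the lower bound.
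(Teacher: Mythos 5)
Your approach is the standard one and is exactly the content of the references the paper cites in lieu of a proof (Feller, p.~342; It\^o--McKean, Problem 1.7.8): expand the Dirichlet heat kernel $q_t(y,z)$ on $(-1,1)$ in the eigenbasis $\phi_n$ and show that the first mode dominates. The setup and the large-$t$ analysis are correct, including the key pointwise bound $|\phi_n|\leq n\phi_1$. One small imprecision in the upper bound: bounding the tail sum by a $t$-independent constant $C(\eps)$ is not enough, since an additive error of size $C(\eps)\phi_1(y)\int_p^q\phi_1$ eventually dwarfs $e^{-\pi^2t/8}\phi_1(y)\int_p^q\phi_1$; you need to keep the leading exponential, i.e.\ $\sum_{n\geq2}n^2e^{-\pi^2n^2t/8}\leq e^{-\pi^2t/8}\sum_{n\geq2}n^2e^{-\pi^2(n^2-1)\eps/8}=C(\eps)e^{-\pi^2t/8}$ for $t\geq\eps$. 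You clearly have this manipulation in hand (you use it for the lower bound), but as written the upper bound step is stated too loosely.

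The genuine remaining gap is the short-time lower bound $q_{\eps/2}(y,w)\gtrsim_\eps\phi_1(y)\phi_1(w)$, which you correctly identify as the obstacle but do not prove; ``invoked from standard semigroup theory'' is a citation, not an argument. Two remarks. First, once you have this bound the lower bound follows for \emph{all} $t\geq\eps$ with no appeal to the long-time regime, because the eigenfunction identity $\int_{-1}^{1}\phi_1(w)\,q_s(w,z)\,dw=e^{-\pi^2s/8}\phi_1(z)$ is exact for every $s\geq0$ (by symmetry of $q_s$ and the semigroup acting on $\phi_1$); Chapman--Kolmogorov at time $\eps/2$ then gives $q_t(y,z)\gtrsim_\eps\phi_1(y)\,e^{-\pi^2(t-\eps/2)/8}\phi_1(z)\geq e^{-\pi^2t/8}\phi_1(y)\phi_1(z)$, so the case split at $t_0$ becomes unnecessary. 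Second, to actually close the gap you should either derive the bound $q_s(y,w)\gtrsim_s(1-|y|)(1-|w|)\asymp\phi_1(y)\phi_1(w)$ from the method-of-images (reflection) representation of $q_s$, whose leading image terms control the boundary decay at fixed $s>0$, or chain three transitions of duration $\eps/6$ through the middle half of the interval, using the gambler's-ruin-with-a-time-constraint estimate $\P_y(\xi\text{ reaches }(-\tfrac12,\tfrac12)\text{ before exiting }(-1,1)\text{ and before time }\eps/6)\gtrsim_\eps 1-|y|$ at each end. Either route is elementary but must be written out; as it stands your proof establishes the upper bound and the lower bound only for $t\geq t_0$.
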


Lemma \ref{fellerlem} may easily be deduced from \cite[page 342]{feller:book}, or \cite[page 30, Problem 1.7.8]{ito_mckean:diffusion_procs_sample_paths}. Now we want to consider tubes whose width is not fixed. We estimate the probability that $y+\xi_t$ stays within $(-L(s)/2, L(s)/2)$ for all times $s\in[0,t]$, adapting slightly an idea of Novikov \cite{novikov:asymptotic_behaviour_nonexit_probs}.

\begin{lem}\label{pasymp}
Assume (A) holds with $f\equiv 0$. Let $K(s)=L(s)/2$ for each $s\geq0$. Then for any $\eps>0$, $t\geq \rho_L(\eps)$, $y\in(-1,1)$ and $0\leq p<q\leq 1$,
\begin{multline*}
\P(y+\xi_s \in (-K(s), K(s))\hs \forall s\leq t, \hs y+\xi_t \in (2p-1)K(t),(2q-1)K(t)))\\
\asymp_{\bar L,\eps} e^{-\int_0^t \frac{\pi^2}{2L(s)^2}ds + \frac{1}{2}\log L(t) - \frac{1}{2}\log L(0)}\cos\left(\frac{\pi y}{L(0)}\right)\int_p^q \sin(\pi\nu)d\nu.
\end{multline*}
\end{lem}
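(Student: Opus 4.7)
The plan is Novikov's approach: normalise the tube to constant width $(-1,1)$, time-change away the varying speed, use Girsanov to strip the residual drift, and then invoke Lemma \ref{fellerlem}. Set $\eta_s = (y+\xi_s)/K(s)$, so that the event of interest becomes $\{\eta_s\in(-1,1)\hs\forall s\leq t,\ \eta_t\in(2p-1, 2q-1)\}$. Itô gives $d\eta_s = K(s)^{-1}\,d\xi_s - \eta_s(K'/K)(s)\,ds$, and the deterministic time change $\tau(s) = \int_0^s K(u)^{-2}\,du$ (with inverse $\sigma$) produces $\tilde\eta_\tau := \eta_{\sigma(\tau)}$ satisfying
\[d\tilde\eta_\tau = dW_\tau - \tilde\eta_\tau(K'K)(\sigma(\tau))\,d\tau\]
for some $\P$-Brownian motion $W$. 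Let $\Q$ be the measure obtained via Girsanov (localised at the first exit of $\tilde\eta$ from $(-1,1)$ to sidestep Novikov, since the drift is otherwise unbounded) under which $\tilde\eta_\tau$ is a Brownian motion starting from $y/K(0) = 2y/L(0)$.

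On the exit-free event, the Radon--Nikodym derivative is
\[\frac{d\P}{d\Q} = \exp\!\left(-\int_0^T \tilde\eta_\tau(K'K)(\sigma(\tau))\,d\tilde\eta_\tau - \tfrac12\int_0^T \tilde\eta_\tau^2 (K'K)^2(\sigma(\tau))\,d\tau\right),\]
where $T = \tau(t)$. I would reduce the stochastic integral by applying Itô to $\tfrac12 h(\tau)\tilde\eta_\tau^2$ with $h(\tau) = (K'K)(\sigma(\tau))$. The only "large" surviving piece is
\[\tfrac12\int_0^T h(\tau)\,d\tau = \tfrac12\int_0^t K'(s)/K(s)\,ds = \tfrac12\log L(t) - \tfrac12\log L(0).\]
The boundary terms $\tfrac12 h(0)\tilde\eta_0^2$, $\tfrac12 h(T)\tilde\eta_T^2$, the drift-correction $\tfrac12\int(K''K+(K')^2)\eta_s^2\,ds$ (obtained from the $h'$ term after changing back to $s$), and the quadratic-variation contribution $\tfrac12\int\eta_s^2(K')^2\,ds$ are each bounded in absolute value by a constant depending only on $\bar L$, using $|\tilde\eta|\leq 1$ on the event together with assumption (A) and the integration-by-parts identity $\int_0^t(L')^2\,ds = [L'L]_0^t - \int_0^t L''L\,ds$. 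Hence $d\P/d\Q \asymp_{\bar L} \exp(\tfrac12\log L(t) - \tfrac12\log L(0))$ on the event in question.

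It then remains to apply Lemma \ref{fellerlem} to $\tilde\eta$ under $\Q$. Since $T = 4\int_0^t L(s)^{-2}\,ds > 4\eps$, the lemma applies with constants depending only on $\eps$ and gives a $\Q$-probability of order $e^{-\pi^2 T/8}\cos(\pi y/L(0))\int_{2p-1}^{2q-1}\cos(\pi\nu/2)\,d\nu$, where $\pi^2 T/8 = \int_0^t \pi^2/(2L(s)^2)\,ds$ and the starting point $2y/L(0)$ supplies the cosine factor. The substitution $\nu = 2\mu-1$ turns the endpoint integral into $2\int_p^q\sin(\pi\mu)\,d\mu$, with the factor of $2$ absorbed into the $\asymp$ constants. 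Combining this with the Girsanov bound above yields the claim. The principal technical obstacle is the uniform-in-$\bar L$ control of the error terms: bounding $\int(L')^2\,ds$ is not immediate from (A) and is recovered only via the integration-by-parts identity above, while the localisation needed for a clean Girsanov application is a routine but necessary bit of bookkeeping.
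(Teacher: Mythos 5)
Your proposal is correct and follows essentially the same route as the paper's proof (Novikov's transformation: normalise the tube, time-change, Girsanov, then Lemma \ref{fellerlem}), merely performing the normalisation and time change before the change of measure rather than after; the key Itô/integration-by-parts identity producing the $\frac12\log L(t)-\frac12\log L(0)$ factor and the use of (A) to control the remainder are the same. One small remark: the $\int (L')^2$ contributions from the drift-correction and quadratic-variation terms in fact cancel exactly, so the integration-by-parts bound on $\int_0^t L'(s)^2\,ds$ you flag as the main obstacle is not actually needed (though your derivation of it is valid).
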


\begin{proof}
For $z\in(-1,1)$ set
\[U_t = K(t)z + K(t)\int_0^t \frac{1}{K(s)}d\xi_s.\]
Then
\begin{equation}\label{du}
dU_t = d\xi_t + U_t \frac{K'(t)}{K(t)}dt,
\end{equation}
so if we define $\Pt$ by
\[\left.\frac{d\Pt}{d\P}\right|_{\G_t} = e^{\int_0^t \xi_s \frac{K'(s)}{K(s)} d\xi_s - \frac{1}{2}\int_0^t \xi_s^2 \frac{K'(s)^2}{K(s)^2} ds}\]
then by Girsanov's theorem, $(\xi_t,t\geq0)$ under $\Pt$ has the same distribution as $(U_t-U_0,t\geq0)$ under $\P$. 
In particular
\begin{multline*}
\P(|U_s|<K(s)\hsl\forall s\leq t, \hsl U_t \in ((2p-1)K(t),(2q-1)K(t)))\\
 = \Pt(|K(0)z+\xi_s|<K(s)\hsl\forall s\leq t, \hsl K(0)z + \xi_t \in ((2p-1)K(t),(2q-1)K(t))).
\end{multline*}
Thus, letting $z=y/K(0)$,
\begin{align*}
&\E\left[e^{\int_0^t \xi_s \frac{K'(s)}{K(s)} d\xi_s - \frac{1}{2}\int_0^t \xi_s^2 \frac{K'(s)^2}{K(s)^2} ds}\ind_{\{|K(0)z+\xi_s|<K(s) \hsl\forall s\leq t, \hsl K(0)z+\xi_t\in((2p-1)K(t),(2q-1)K(t))\}}\right]\\
&= \Pt(|K(0)z+\xi_s|<K(s) \hs \forall s\leq t, \hs K(0)z+\xi_t\in((2p-1)K(t),(2q-1)K(t)))\\
&= \P(|U_s| < K(s) \hs \forall s\leq t, \hs U_t \in ((2p-1)K(t),(2q-1)K(t)))\\
&= \P\left(\left|z+\int_0^s \frac{1}{K(r)}d\xi_r\right| < 1 \hs \forall s\leq t, \hs z + \int_0^t \frac{1}{K(r)}d\xi_r \in \left(2p-1,2q-1\right)\right)\\
&= \P\left(|z+\xi_s| < 1 \hs \forall s\leq \int_0^t \frac{1}{K(r)^2} dr, \hs z+\xi_{\int_0^t 1/K(r)^2 dr} \in (2p-1,2q-1)\right).
\end{align*}
where for the last line we used the Dubins-Schwarz theorem. Under $\P$, by integration by parts,
\[\frac{K'(t)}{2K(t)}\xi_t^2 = \int_0^t \xi_s \frac{K'(s)}{K(s)} d\xi_s + \int_0^t \frac{\xi_s^2 K''(s)}{2K(s)}ds - \int_0^t \frac{\xi_s^2 K'(s)^2}{2K(s)^2} ds + \frac12 \log K(t) - \frac12 \log K(0)\]
so using assumption (A), on the event $\{|K(0)z+\xi_s|<K(s) \hsl \forall s\leq t\}$,
\[e^{\int_0^t \xi_s \frac{K'(s)}{K(s)} d\xi_s - \frac{1}{2}\int_0^t \xi_s^2 \frac{K'(s)^2}{K(s)^2} ds} \asymp_{\bar L} e^{\frac{1}{2}\log K(0) - \frac{1}{2}\log K(t)}\]
almost surely. Thus
\begin{multline*}
\P(|K(0)z+\xi_s|<K(s) \hsl\forall s\leq t, \hsl K(0)z+\xi_t\in((2p-1)K(t),(2q-1)K(t)))\\
\asymp_{\bar L} e^{\frac{1}{2}\log \frac{L(t)}{L(0)}}\P\left(|z+\xi_s| < 1 \hsl \forall s\leq \int_0^t \frac{1}{K(r)^2} dr, \hsl z+\xi_{\int_0^t 1/K(r)^2 dr} \hspace{-1mm}\in (2p-1,2q-1)\right).
\end{multline*}
The result now follows from Lemma \ref{fellerlem}.
\end{proof}

Finally we apply Girsanov's theorem, together with standard estimates, to consider tubes about the function $f$ (or, to be precise, $f+K$) rather than about $0$.

\begin{proof}[Proof of Proposition \ref{qasymp}]
Define $\hat\P$ by setting
\[\left.\frac{d\hat\P}{d\P}\right|_{\G_t} = e^{\int_0^t (f'(s)+K'(s))d\xi_s - \frac12\int_0^t (f'(s)+K'(s))^2 ds}.\]
For each $t\geq0$ let $\tilde\xi_t = \xi_t - f(t)- K(t)$, and define
\[A_t = \{\tilde\xi_s \in (-K(s),K(s))\hs\forall s\leq t, \hs \tilde\xi_t \in ((2p-1)K(t),(2q-1)K(t))\}.\]
By integration by parts,
\[(f'(t)+K'(t))\tilde \xi_t = (f'(0)+K'(0))\tilde \xi_0 + \int_0^t (f'(s)+K'(s))d\tilde\xi_s + \int_0^t (f''(s)+K''(s))\tilde\xi_s ds\]
so
\begin{align*}
\hat \P(A_t) &= \E\left[e^{\int_0^t (f'(s)+K'(s))d\xi_s - \frac12\int_0^t (f'(s)+K'(s))^2 ds} \ind_{A_t}\right]\\
&= \E\left[e^{\frac12 \int_0^t (f'(s)+K'(s))^2 ds + (f'(0)+K'(0))(f(0)+K(0)) + (f'(t)+K'(t))\tilde \xi_t - \int_0^t (f''(s)+K''(s))\tilde\xi_s ds}\ind_{A_t}\right].
\end{align*}
But on $A_t$, assuming (A), the exponent above is
\[\frac12\int_0^t f'(s)^2 ds + f'(t)K(t) - f'(0)x + f'(t)\tilde\xi_t + O(\bar L)\]
almost surely, so
\begin{equation}\label{hatasymp}
\hat\P(A_t) \asymp_{\bar L} \E\left[e^{f'(t)K(t) + \frac12\int_0^t f'(s)^2 ds - f'(0)x + f'(t)\tilde\xi_t}\ind_{A_t}\right].
\end{equation}
However, by Girsanov's theorem, under $\hat\P$, $(\tilde\xi_t, t\geq0)$ is a Brownian motion started from $x-K(0)$, so by Lemma \ref{pasymp}
\[\hat\P(A_t) \asymp_{\bar L} e^{-\int_0^t \frac{\pi^2}{2L(s)^2}ds + \frac{1}{2}\log L(t) - \frac{1}{2}\log L(0)}\cos\left(\frac{\pi (x-K(0))}{L(0)}\right)\int_p^q \sin(\pi\nu)d\nu.\]
The result now follows from rearranging (\ref{hatasymp}).
\end{proof}

For times $t<\rho_L(\eps)$, the following easy estimate will be sufficient for our needs.

\begin{lem}\label{altqasymp}
Assume (A). For any $t>0$, if $f'(t)\geq 0$, for any $0\leq p\leq q\leq 1$,
\begin{align*}
&\P\left(\xi_s-f(s)\in (0, L(s)) \hs \forall s\leq t, \hs \xi_t - f(t) \in \left(pL(t),qL(t)\right)\right)\\
&\lesssim_{\bar L} e^{-\frac{1}{2}\int_0^t f'(s)^2 ds - f'(0)f(0) - pf'(t)L(t)}\\
&\hspace{25mm}\cdot\left((-f(0))(q^2-p^2)\frac{L(t)^2}{t^{3/2}}\wedge(L(0)+f(0))\big((1-p)^2 - (1-q)^2\big)\frac{L(t)^2}{t^{3/2}}\wedge 1\right).
\end{align*}
\end{lem}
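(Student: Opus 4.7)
The approach mirrors the proof of Proposition~\ref{qasymp}: a Girsanov transform removes the $f'$-drift, reducing to a tube probability for a driftless Brownian motion. For $t<\rho_L(\eps)$ Lemma~\ref{fellerlem} is unavailable, so instead we use crude reflection-principle estimates, which give exactly the three alternative terms in the claimed minimum.

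\textbf{Step 1 (remove the $f'$-drift).} Define $\hat\P$ by $\frac{d\hat\P}{d\P}\big|_{\G_t} = \exp\!\bigl(\int_0^t f'(s)\,d\xi_s - \tfrac12\int_0^t f'(s)^2\,ds\bigr)$, so that under $\hat\P$ the process $\tilde\xi_s := \xi_s - f(s)$ is a Brownian motion starting from $x := -f(0)$. Set $A_t := \{\tilde\xi_s\in(0,L(s))\ \forall s\le t,\ \tilde\xi_t\in(pL(t),qL(t))\}$. The integration-by-parts calculation from the proof of Proposition~\ref{qasymp}, together with $|\tilde\xi_s|\le L(s)$ on $A_t$ and assumption (A), gives on $A_t$
\[
\frac{d\P}{d\hat\P}\bigg|_{\G_t} \lesssim_{\bar L} \exp\!\Bigl(-\tfrac12\!\int_0^t f'(s)^2\,ds - f'(0)f(0) - pf'(t)L(t)\Bigr),
\]
the last summand arising, when $f'(t)\ge 0$, from $-f'(t)\tilde\xi_t \le -pf'(t)L(t)$. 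It remains to show that $\hat\P(A_t)$ is bounded by the claimed minimum of three quantities.

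\textbf{Step 2 (three bounds on $\hat\P(A_t)$).} The trivial bound $\hat\P(A_t)\le 1$ is one of the three terms. For the term involving $-f(0)=x$, drop the upper barrier: by the reflection principle,
\[
\hat\P\bigl(\tilde\xi_s>0\ \forall s\le t,\ \tilde\xi_t\in(pL(t),qL(t))\bigr) = \int_{pL(t)}^{qL(t)}\!\bigl[\phi_t(y-x) - \phi_t(y+x)\bigr]\,dy,
\]
where $\phi_t$ denotes the centred Gaussian density at time $t$; since $\phi_t(y-x)-\phi_t(y+x)\lesssim xy/t^{3/2}$ (from $e^{-u}-e^{-v}\le v-u$ for $0\le u\le v$), integration in $y$ gives $\lesssim x(q^2-p^2)L(t)^2/t^{3/2}$. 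For the term involving $L(0)+f(0)=L(0)-x$, I would perform a second Girsanov to straighten the upper barrier: set $\frac{d\Pt}{d\hat\P}\big|_{\G_t} = \exp\!\bigl(\int_0^t L'(s)\,d\tilde\xi_s - \tfrac12\int_0^t L'(s)^2\,ds\bigr)$, under which $V_s := L(s)-\tilde\xi_s$ is a Brownian motion starting from $L(0)+f(0)$, and $A_t$ becomes $\{V_s\in(0,L(s))\ \forall s\le t,\ V_t\in((1-q)L(t),(1-p)L(t))\}$. Integration by parts plus (A) show that on $A_t$, $\frac{d\hat\P}{d\Pt}\big|_{\G_t}\lesssim_{\bar L}1$; the potentially worrying term $\int_0^t L'(s)^2\,ds$ is controlled by the identity $\int_0^t L'(s)^2\,ds = L'(t)L(t) - L'(0)L(0) - \int_0^t L''(s)L(s)\,ds$, each of whose three pieces is at most $\bar L$ by (A). Applying the reflection bound above to $V$ yields $\hat\P(A_t)\lesssim_{\bar L}(L(0)+f(0))((1-p)^2-(1-q)^2)L(t)^2/t^{3/2}$, completing the estimate.

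\textbf{Main obstacle.} The only subtle point is the second Girsanov used to handle the time-dependent upper barrier $L(s)$: the classical reflection estimate for Brownian motion below a barrier applies cleanly only for constant barriers, and the resulting Radon--Nikodym derivative depends on $\int_0^t L'(s)^2\,ds$, which only becomes controllable after a single integration by parts.
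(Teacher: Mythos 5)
Your proof is correct and follows essentially the same route as the paper: Girsanov to kill the $f'$-drift, then a reflection-principle bound against the lower barrier for the $(-f(0))$ term, the symmetric argument against the upper barrier for the $(L(0)+f(0))$ term, and the trivial bound of $1$. Your explicit second Girsanov $\hat\P\to\Pt$ (straightening $L$) is just an unpacked version of the paper's remark that one ``applies the same calculations, plus assumption (A), to $\hat\xi = f(t)+L(t)-\xi_t$'' — composing your two changes of measure recovers exactly the single Girsanov with drift $f'+L'$ that remark implies.
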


\begin{proof}
Let $\tilde \xi_t = \xi_t - f(t)$. Define
\[B_t = \{\tilde\xi_s > 0 \hs \forall s\leq t, \hs \tilde\xi_t \in (pL(t),qL(t))\}\]
Define $\tilde\P$ by setting
\[\left.\frac{d\tilde\P}{d\P}\right|_{\G_t} = e^{\int_0^t f'(s)d\xi_s - \frac12\int_0^t f'(s)^2 ds}.\]
Note that, using similar integration by parts calculations as in the proof of Proposition \ref{qasymp},
\begin{align*}
\tilde\P(B_t) &= \E\left[e^{\int_0^t f'(s)d\xi_s - \frac12\int_0^t f'(s)^2 ds}\ind_{B_t}\right]\\
&= \E\left[e^{f'(t)\tilde\xi_t - \int_0^t f''(s)\tilde\xi_s ds + f'(0)f(0) + \frac12\int_0^t f'(s)^2 ds}\ind_{B_t}\right]
\end{align*}
so using assumption (A),
\[\P(B_t) \lesssim_{\bar L} e^{-pf'(t)L(t) - f'(0)f(0) - \frac12 \int_0^t f'(s)^2 ds}\tilde\P(B_t).\]
But under $\tilde\P$, by Girsanov's theorem, $(\tilde\xi_t,t\geq0)$ is Brownian motion started from $x$. By the reflection principle,
\begin{align*}
\tilde\P(B_t) &= \frac{1}{\sqrt{2\pi t}}\int_{pL(t)}^{qL(t)} \left(e^{-(x-z)^2/2t} - e^{-(x+z)^2/2t}\right)dz\\
&\lesssim \int_{pL(t)}^{qL(t)} \frac{z}{t^{3/2}} dz \wedge 1 \hs \lesssim \hs x(q^2-p^2)\frac{L(t)^2}{t^{3/2}} \wedge 1.
\end{align*}
This gives us the bound
\[\P(B_t) \lesssim_{\bar L} e^{-\frac{1}{2}\int_0^t f'(s)^2 ds - f'(0)f(0) - pf'(t)L(t)}\left( x(q^2-p^2)\frac{L(t)^2}{t^{3/2}}\wedge 1\right);\]
applying the same calculations, plus assumption (A), to $\hat\xi = f(t)+L(t)-\xi_t$ gives the remaining bound.
\end{proof}

\section{Proof of Theorem \ref{jaff}}\label{jaff_proof}

We now apply Proposition \ref{qasymp} and Lemma \ref{altqasymp} to prove Theorem \ref{jaff}, which said that if we let $A_c = 3^{4/3}\pi^{2/3}2^{-7/6}$, then with positive probability there is always a particle above $\sqrt2 t - A_c t^{1/3} + A_c t^{1/3}/\log^2(t+e) - 1$. Let
\[f(t) = \sqrt2 (t+e) - A_c(t+e)^{1/3} + A_c \frac{(t+e)^{1/3}}{\log^2(t+e)} - C\]
and
\[L(t) = \alpha (t+e)^{1/3} + \beta\frac{(t+e)^{1/3}}{\log(t+e)}\]
where $C$ is any constant such that $f(0)<0$ and $f(0)+L(0)>0$. We will choose $\alpha$ and $\beta$ later. We want to count the number of particles staying above $f(u)$ up to time $t$, but a direct attack via moment estimates will not work because the second moment overcounts the number of particles that go unrealistically high early on. We therefore introduce the upper boundary $f(u)+L(u)$, and count the number of particles that stay in the tube between $f(u)$ and $f(u)+L(u)$ up to time $t$. Even this, though, does not work on its own. Our calculation ends up being dominated by particles which have been to the top of the tube early on and then fallen to the bottom by time $t$, which is not a ``sustainable'' tactic for survival: particles near the bottom of the tube at time $t$ are very likely to fall below $f(u)$ at some time $u$ slightly after $t$. We therefore count only particles that stay within the tube up to time $t$ \emph{and} sit near the top of the tube at time $t$.

To this end, define
\[\tilde N(t) = \#\{v\in N(t) : X_v(s)-f(s) \in(0,L(s)) \hs \forall s\leq t, \hs X_v(t) - f(t) \in (L(t)-2,L(t)-1)\}.\]
We will estimate the first and second moments of $\tilde N(t)$. One may easily check that for our choice of $L$, assumption (A) holds for some constant $\bar L$. Further,
\[\frac12\int_0^t f'(s)^2 ds = t - \sqrt2 A_c t^{1/3} + \sqrt2 A_c \frac{t^{1/3}}{\log^2 t} + O(1)\]
and
\[f'(t)L(t) = \sqrt2\alpha t^{1/3} + \sqrt2\beta \frac{t^{1/3}}{\log t} + O(1).\]
A longer, but still elementary, calculation reveals that
\[\int_0^t \frac{1}{L(s)^2} ds = \frac{3}{\alpha^2}t^{1/3} - \frac{6\beta}{\alpha^3} \frac{t^{1/3}}{\log t} + \frac{9\beta}{\alpha^4}(\beta-2\alpha)\frac{t^{1/3}}{\log^2 t} - \frac{6\beta}{\alpha^5}(2\beta^2 - 9\alpha\beta + 18\alpha^2)\frac{t^{1/3}}{\log^3 t} - E(t)\]
where $E(t) = O(t^{1/3}/\log^4 t)$.

\begin{lem}\label{jaff1mom}
If $\alpha = \beta = 3^{1/3}\pi^{2/3}2^{-1/6}$, then for all $t\geq2$,
\[\E[\tilde N(t)] \asymp \exp\left(\frac{\gamma t^{1/3}}{\log^3 t} + E(t) - \frac12\log t\right)\]
where $\gamma = 2^{1/3}3^{1/3}11\pi^{2/3}$.
\end{lem}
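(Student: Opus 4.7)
The plan is to reduce the first moment computation to an application of Proposition \ref{qasymp}, and then to verify that the chosen values of $\alpha$ and $\beta$ cause the three leading orders in the exponent to cancel, leaving only the $t^{1/3}/\log^3 t$ term and the logarithmic prefactors.

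First I would apply the standard many-to-one formula, which says
\[\E[\tilde N(t)] = e^t \hsl \P\!\left(\xi_s - f(s) \in (0,L(s)) \hs \forall s\leq t, \hs \xi_t - f(t) \in (L(t)-2, L(t)-1)\right).\]
Assumption (A) is straightforward to check for our choice of $f$ and $L$, since $L$, $L'$, $L''$ and $f''$ all decay (or grow) at polynomial rates in $t$ multiplied by powers of $\log t$, and the relevant integrals converge. I would then invoke Proposition \ref{qasymp} with $p = 1 - 2/L(t)$ and $q = 1 - 1/L(t)$: since $1-p, 1-q = O(t^{-1/3})$, we have $(1-p)f'(t)L(t) - (1-q)f'(t)L(t) = O(1)$, so the upper and lower bounds collapse. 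Near $\nu = 1$, $\sin(\pi\nu) \asymp \pi(1-\nu)$, whence
\[\int_p^q \sin(\pi\nu)\,d\nu \asymp \tfrac{1}{2}\bigl((1-p)^2 - (1-q)^2\bigr) = \tfrac{3}{2L(t)^2},\]
while $\sin(\pi x/L(0))$ is a fixed positive constant. Thus
\[\E[\tilde N(t)] \asymp \exp\bigl(t - \ec(f,L,t)\bigr) \cdot L(t)^{-2}.\]

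Next I would expand $t - \ec(f,L,t)$ using the three expansions already recorded just before the lemma, namely
\[\tfrac12\!\int_0^t \! f'(s)^2 ds = t - \sqrt 2 A_c t^{1/3} + \sqrt 2 A_c \tfrac{t^{1/3}}{\log^2 t} + O(1), \qquad f'(t)L(t) = \sqrt 2 \alpha t^{1/3} + \sqrt 2 \beta \tfrac{t^{1/3}}{\log t} + O(1),\]
together with the stated expansion of $\int_0^t L(s)^{-2}ds$, while $f'(0)f(0)$ and $\tfrac12\log L(0)$ are $O(1)$ and $-\tfrac12\log L(t) = -\tfrac16\log t + O(1)$. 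Collecting the coefficient of $t^{1/3}$ gives $\sqrt 2 A_c - 3\pi^2/(2\alpha^2) - \sqrt 2 \alpha$; the coefficient of $t^{1/3}/\log t$ gives $3\beta\pi^2/\alpha^3 - \sqrt 2 \beta$. With $\alpha = \beta = 3^{1/3}\pi^{2/3}2^{-1/6}$ one computes $\alpha^3 = 3\pi^2/\sqrt 2$ and $\alpha^2 = 3^{2/3}\pi^{4/3}2^{-1/3}$, and both coefficients vanish identically; these are precisely the relations that dictate the values of $\alpha$ and $A_c$.

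For the $t^{1/3}/\log^2 t$ coefficient, setting $\alpha = \beta$ reduces $-9\beta(\beta - 2\alpha)\pi^2/(2\alpha^4)$ to $+9\pi^2/(2\alpha^2) = 3^{4/3}\pi^{2/3}2^{-2/3}$, which exactly cancels the contribution $-\sqrt 2 A_c = -3^{4/3}\pi^{2/3}2^{-2/3}$ from $t - \tfrac12\int f'^2$. This cancellation is the raison d'\^etre of the $+A_c t^{1/3}/\log^2(t+e)$ correction in $f$, and is the most delicate algebraic step. The $t^{1/3}/\log^3 t$ coefficient receives a contribution only from the expansion of $\int L^{-2}$, giving (with $\alpha = \beta$)
\[\frac{3\alpha \cdot (2\alpha^2 - 9\alpha^2 + 18\alpha^2)\pi^2}{\alpha^5} = \frac{33\pi^2}{\alpha^2} = 2^{1/3}3^{1/3}\cdot 11\pi^{2/3} = \gamma.\]
All further terms are absorbed into $E(t) = O(t^{1/3}/\log^4 t)$, up to rescaling by the constant $\pi^2/2$ inherited from $\ec$, which is still $O(t^{1/3}/\log^4 t)$.

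Finally I would combine everything: $t - \ec(f,L,t) = \gamma t^{1/3}/\log^3 t + E(t) + \tfrac16 \log t + O(1)$, and $L(t)^{-2} \asymp t^{-2/3}$ contributes an extra $-\tfrac23 \log t$, yielding the claimed $-\tfrac12 \log t$. The main obstacle is the cancellation at order $t^{1/3}/\log^2 t$; the rest is bookkeeping, and the many-to-one reduction followed by Proposition \ref{qasymp} does all the probabilistic work.
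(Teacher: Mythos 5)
Your proposal is correct and follows the same route as the paper: many-to-one plus Proposition \ref{qasymp} with $p=1-2/L(t)$, $q=1-1/L(t)$, then collect the expansions of $\frac12\int f'^2$, $\frac{\pi^2}{2}\int L^{-2}$ and $f'(t)L(t)$ to verify that the $t^{1/3}$, $t^{1/3}/\log t$ and $t^{1/3}/\log^2 t$ coefficients all vanish for the stated $\alpha=\beta$, with the surviving $t^{1/3}/\log^3 t$ coefficient equal to $\gamma$ and the logarithmic terms combining to $-\frac12\log t$. Your write-up actually fills in the algebraic cancellations that the paper leaves implicit, and all the numerics check out.
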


\begin{proof}
From the calculations above, we have
\begin{multline*}
\ec(f,L,t) = t + \left(-\sqrt2 A_c + \frac{3\pi^2}{2\alpha^2}+\sqrt2\alpha\right) t^{1/3}\\
+ \left(\sqrt2 - \frac{3\pi^2}{\alpha^3}\right)\beta\frac{t^{1/3}}{\log t} + \left(\frac{9\pi^2\beta}{2\alpha^4}(\beta-2\alpha) + \sqrt2 A_c\right)\frac{t^{1/3}}{\log^2 t}\\
- \frac{3\pi^2\beta}{\alpha^5}(2\beta^2-9\alpha\beta+18\alpha^2)\frac{t^{1/3}}{\log^3 t} - E(t) - \frac{1}{6}\log t + O(1).
\end{multline*}
Choosing $\alpha = \beta = 3^{1/3}\pi^{2/3}2^{-1/6}$, we see that the coefficients of $t^{1/3}$, $t^{1/3}/\log t$, and $t^{1/3}/\log^2 t$ all disappear, leaving us with
\begin{equation}\label{jaffe}
\ec(f,L,t) = t - \gamma\frac{t^{1/3}}{\log^3 t} - E(t) - \frac{1}{6}\log t + O(1).
\end{equation}
Now applying the many-to-one lemma and Proposition \ref{qasymp}, noting that $\int_{1-2/L(t)}^{1-1/L(t)} \sin(\pi\nu)d\nu \asymp t^{-2/3}$, we see that
\begin{align*}
\E[\tilde N(t)] &= e^t\P(\xi_s-f(s) \in(0,L(s)) \hs \forall s\leq t, \hs \xi_t - f(t) \in (L(t)-2,L(t)-1))\\
&\asymp \exp\left(\frac{\gamma t^{1/3}}{\log^3 t} + E(t) - \frac12 \log t \right).\qedhere
\end{align*}
\end{proof}

\begin{lem}\label{jaff2mom}
If $\alpha = \beta = 3^{1/3}\pi^{2/3}2^{-1/6}$, then for all $t\geq2$,
\[\E[\tilde N(t)^2] \lesssim \E[\tilde N(t)] + \exp\left(\frac{2\gamma t^{1/3}}{\log^3 t} + 2 E(t) - \log t\right)\]
where, again, $\gamma = 2^{1/3}3^{1/3}11\pi^{2/3}$.
\end{lem}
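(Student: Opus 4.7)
The plan is a standard second-moment computation via the many-to-two lemma for BBM. Applying that formula with the indicator that a particle satisfies the tube condition used in the definition of $\tilde N(t)$, we get
\[\E[\tilde N(t)^2] - \E[\tilde N(t)] = 2\int_0^t e^{2t-s}\,\P(B_s)\,ds,\]
where $B_s$ is the event that a Brownian motion $(\xi_r)_{r\le s}$ stays in the tube $\{f(r)<\cdot<f(r)+L(r)\}$ on $[0,s]$ and, conditionally on $\xi_s$, two independent Brownian motions started from $\xi_s$ at time $s$ each stay in the tube on $[s,t]$ and land in $(f(t)+L(t)-2,f(t)+L(t)-1)$ at time $t$.

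Next I would estimate $\P(B_s)$ by conditioning on the split position $\xi_s-f(s)=\nu L(s)$. Proposition \ref{qasymp} gives that the density of this position on the event that the common path stays in the tube is of order $e^{-\ec(f,L,s)+(1-\nu)f'(s)L(s)}\sin(\pi x/L(0))\sin(\pi\nu)\,d\nu$, while after a space-time shift that places the daughter at $0$ at time $0$ with $f$ a distance $\nu L(s)$ below, the probability that a daughter path satisfies its part of $B_s$ is of order $e^{-\ec_0(s,t)+f'(s)\nu L(s)}\sin(\pi\nu)/L(t)^2$, where $\ec_0(s,t)$ denotes the $\nu$-independent part of the shifted $\ec$-functional. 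Multiplying the common-path density by the square of the daughter probability and integrating yields
\[\P(B_s)\lesssim \frac{e^{-\ec(f,L,s)-2\ec_0(s,t)+f'(s)L(s)}}{L(t)^4}\int_0^1 \sin^3(\pi\nu)\, e^{\nu f'(s)L(s)}\,d\nu \asymp \frac{e^{-\ec(f,L,s)-2\ec_0(s,t)+2f'(s)L(s)}}{L(t)^4 L(s)^4},\]
the $\nu$-integral being evaluated by a Laplace-type substitution $\nu=1-u/(f'(s)L(s))$ together with $\sin^3(\pi\nu)\asymp(\pi u/(f'(s)L(s)))^3$ near $\nu=1$.

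The key algebraic identity is obtained by writing out every term in $\ec(f,L,s)$, $\ec_0(s,t)$ and $\ec(f,L,t)$ and matching them ($f'^2$, $\pi^2/L^2$, $f'(s)L(s)$, $f'(t)L(t)$, $f'(0)f(0)$, log and boundary contributions):
\[-\ec(f,L,s)-2\ec_0(s,t)+2f'(s)L(s) = -2\ec(f,L,t)+\ec(f,L,s).\]
Combined with the first-moment asymptotic from Lemma \ref{jaff1mom}, namely $\E[\tilde N(t)]\asymp e^{t-\ec(f,L,t)}/L(t)^2$, this reduces the second moment bound to
\[\E[\tilde N(t)^2]-\E[\tilde N(t)]\lesssim \E[\tilde N(t)]^2\cdot\int_0^t \frac{e^{-s+\ec(f,L,s)}}{L(s)^4}\,ds.\]
Using \eqref{jaffe} and $L(s)\asymp s^{1/3}$, for $s\ge 2$ the integrand is $\asymp s^{-3/2}\exp(-\gamma s^{1/3}/\log^3 s - E(s))$; since $|E(s)|=O(s^{1/3}/\log^4 s)$ is dominated by $\gamma s^{1/3}/\log^3 s$, this is $\lesssim s^{-3/2}$ for large $s$, giving a uniformly bounded integral (the piece $s\le 2$ being trivially bounded since $\ec(f,L,s)$ is).

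The main obstacle is the exponent bookkeeping in the central identity above: roughly a dozen quantities must cancel correctly, and a single slip (for instance misplacing the $f'(s)\nu L(s)$ term coming from $\tilde f'(0)\tilde f(0)$ in the shifted daughter problem) would spoil the comparison with $\E[\tilde N(t)]^2$. A secondary technical point is the regime $s$ close to $t$, where $t-s<\rho_L(\eps)$ and Proposition \ref{qasymp} does not apply to the daughter segments; there I would replace the daughter estimate by Lemma \ref{altqasymp} and verify that the corresponding contribution to $\int_0^t e^{2t-s}\P(B_s)\,ds$ remains subdominant.
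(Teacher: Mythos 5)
Your proposal is correct and follows essentially the same route as the paper: the many-to-two lemma, Proposition \ref{qasymp} applied to the common segment and to each (shifted) daughter segment, integration over the split position, and a final integral over the split time $r$ that is controlled because $e^{-s+\ec(f,L,s)}L(s)^{-4}\lesssim s^{-3/2}$; your exponent identity $-\ec(f,L,s)-2\ec_0(s,t)+2f'(s)L(s)=-2\ec(f,L,t)+\ec(f,L,s)$ checks out term by term and is a clean way to organise the bookkeeping that the paper instead does by substituting the explicit asymptotics \eqref{jaffe} early. The only additions needed are the two boundary regimes: not just $t-s<\rho_L(\eps)$ (which you flagged, and whose contribution comes out as $\lesssim\E[\tilde N(t)]$ via Lemma \ref{altqasymp} --- hence that term in the statement), but also $s<\rho_L(\eps)=O(1)$, where Proposition \ref{qasymp} is not valid for the \emph{common} segment either and one substitutes the trivial bound $\P(\text{common path survives})\le1$ over that bounded range of $s$.
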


\begin{proof}
We apply the many-to-two lemma, which is a simple tool for calculating second moments (see for example \cite{harris_roberts:many_to_few} for more details). Suppose that $T$ is an independent exponentially distributed random variable of parameter 2 and, given $T$, $(\xi^{(1)}_s,s\geq0)$ and $(\xi^{(2)}_s,s\geq0)$ are standard Brownian motions such that
\begin{itemize}
\item $\xi^{(1)}_s = \xi^{(2)}_s$ for all $s\in[0,T]$;
\item $(\xi^{(1)}_{T+s} - \xi^{(1)}_T, s\geq0)$ and $(\xi^{(2)}_{T+s} - \xi^{(2)}_T, s\geq0)$ are independent given $\G_T$.
\end{itemize}
For $i=1,2$, let
\[A^{(i)}_t = \{\xi^{(i)}_s - f(s) \in (0,L(s))\hs\forall s\leq t\}\]
and
\[C^{(i)}_t = \{\xi^{(i)}_t - f(t) \in (L(t)-2,L(t)-1)\}\]
and define
\[\Theta_t = A_t^{(1)}\cap A_t^{(2)}\cap C_t^{(1)}\cap C_t^{(2)}.\]
Then the many-to-two lemma tells us that
\begin{equation}\label{jaffm22}
\E[\tilde N(t)^2] = \E[\tilde N(t)] + 2\int_0^t e^{2t-s}\P\left(\left.\Theta_t\right|T=s\right)ds.
\end{equation}

We separate $\Theta_t$ into three sections: what happens before time $T$ (when $\xi^{(1)} = \xi^{(2)}$); what happens to $\xi^{(1)}$ after time $T$; and what happens to $\xi^{(2)}$ after time $T$. The latter two are independent given $\xi^{(1)}_T$. The plan is to apply Proposition \ref{qasymp} to each section, although we have to worry about $s$ being too close to $0$ or $t$. To this end define
\[\bar\rho_L(t) = \sup\left\{s<t : \int_s^t \frac{1}{L(u)^2}du > 1\right\}.\]
Note that $\bar\rho_L(t) = t-O(t^{2/3})$ and $\rho_L(1)=O(1)$.

For $s > \rho_L(1)$ and $j\in[0,L(s)-1]$, by Proposition \ref{qasymp} and (\ref{jaffe}), together with the fact that $\int_{1-(j+1)/L(s)}^{1-j/L(s)} \sin(\pi\nu)d\nu \lesssim (j+1)s^{-2/3}$,
\[\P(A^{(1)}_s \cap\{\xi^{(1)}_s - f(s) \in (L(s)-j-1,L(s)-j]\}) \lesssim (j+1)\exp\left( -s + \frac{\gamma s^{1/3}}{\log^3 s} + \sqrt2 j + E(s) - \frac12\log s\right).\]
For $s< \bar\rho_L(t)$, by independence, Proposition \ref{qasymp}, and (\ref{jaffe}),
\begin{align*}
&\P\left( \Theta_t \left| A^{(1)}_s \cap \{\xi^{(1)}_s-f(s) = L(s)-z \}\cap\{T=s\}\right.\right)\\
&= \P\left( A^{(1)}_t \cap C^{(1)}_t \left| A^{(1)}_s \cap \{\xi^{(1)}_s-f(s) = L(s)-z \}\cap\{T=s\}\right.\right)^2\\
&\lesssim (z+1)^2\exp\left( -2(t-s) + 2\gamma\left(\frac{\gamma t^{1/3}}{\log^3 t}- \frac{\gamma s^{1/3}}{\log^3 s}\right) - 2\sqrt2 z + 2E(t) - 2E(s) - \log t -\log s \right).
\end{align*}
Putting these two estimates together, we obtain that for $s\in(\rho_L(1),\bar\rho_L(t))$,
\begin{align*}
\P(\Theta_t | T=s) &\lesssim \sum_j (j+1)^3 s^{-3/2} t^{-1}\exp\left(-2t + s + \frac{2\gamma t^{1/3}}{\log^3 t} - \frac{\gamma s^{1/3}}{\log^3 s} - \sqrt2 j + 2E(t) - E(s)\right)\\
&\asymp s^{-3/2} t^{-1} \exp\left(-2t + s + \frac{2\gamma t^{1/3}}{\log^3 t} - \frac{\gamma s^{1/3}}{\log^3 s} + 2E(t) - E(s)\right).
\end{align*}
Thus
\[\int_{\rho_L(1)}^{\bar\rho_L(t)} e^{2t-s}\P\left(\left.\Theta_t\right|T=s\right)ds \lesssim \exp\left(\frac{2\gamma t^{1/3}}{\log^3 t} + 2E(t) - \log t\right).\]
For $s\leq\rho_L(1)$, we can use the trivial bound $\P(A^{(1)}_s)\leq 1$ together with the above estimate on $\P(\Theta_t |A^{(1)}_s\cap\cdots)$ to get the same bound, so in fact
\begin{equation}\label{lowerint}
\int_{0}^{\bar\rho_L(t)} e^{2t-s}\P\left(\left.\Theta_t\right|T=s\right)ds \lesssim \exp\left(\frac{2\gamma t^{1/3}}{\log^3 t} + 2E(t) - \log t\right).
\end{equation}
When $s\geq\bar\rho_L(t)$, applying Lemma \ref{altqasymp} together with the fact that when $s\geq\bar\rho_L(t)$ we have $t^{1/3}= s^{1/3} + O(1)$, we get that for any $z\in[j,j+1]$,
\[\P(\Theta_t |A^{(1)}_s\cap \{\xi^{(1)}_s-f(s) = L(s)-z \}\cap\{T=s\}) \lesssim \exp\left(-(t-s)-\sqrt2 j\right)\left(\frac{j+1}{(t-s)^{3/2}}\wedge 1\right).\]
Thus in this case, using our previous estimate on $\P(A^{(1)}_s \cap\{\xi^{(1)}_s - f(s) \in (L(s)-j-1,L(s)-j]\})$,
\[\P(\Theta_t|T=s) \lesssim \exp\left(-2t+s + \frac{\gamma t^{1/3}}{\log^3 t} + E(t)\right)\frac{1}{s^{1/2}(t-s+1)^{3/2}}\]
and so (since for $s\geq \bar\rho_L(t)$ we have $s^{-1/2}\asymp t^{-1/2}$) we get
\[\int_{\bar\rho_L(t)}^t e^{2t-s}\P\left(\left.\Theta_t\right|T=s\right)ds \lesssim \exp\left(\frac{\gamma t^{1/3}}{\log^3 t} + E(t) - \frac12\log t\right)\lesssim \E[\tilde N(t)].\]
Putting this together with (\ref{lowerint}) and plugging back into (\ref{jaffm22}), we get the desired bound.
\end{proof}

\begin{proof}[Proof of Theorem \ref{jaff}]
It is clearly enough to prove that with probability bounded away from zero there is always a particle above $f(t)$.
Note that, by Cauchy-Schwarz and Lemmas \ref{jaff1mom} and \ref{jaff2mom},
\[\P(\exists v\in N(t) : X_v(s) > f(s) \hs \forall s\leq t) \geq \P(\tilde N(t)\geq1) \geq \frac{\E[\tilde N(t)]^2}{\E[\tilde N(t)^2]} \geq c\]
for some constant $c$, proving the theorem.
\end{proof}

\section{Proof of Theorem \ref{bram}}\label{bram_proof}
For $x=x_t>0$, we now want to estimate
\[\P(\exists v\in N(t) : X_v(u) > \sqrt2 u -x \hs \forall u\leq t).\]
The general tactic is to consider instead the set of particles that remain between $\sqrt2 u-x$ and $\sqrt2 u-x + L_t(u)$ for $u\in[0,t]$ for some function $L_t$. As in Section \ref{jaff_proof}, the reason for this restriction is that with no upper boundary imposed, events of vanishing probability have a distorting effect on the moments of the number of particles with large position. However, we have not succeeded in finding an appropriate $L_t$; it turns out that for most seemingly sensible choices, the probability of staying within the $L_t$-tube is significantly smaller than the probability of staying above $\sqrt2 u-x$ until time $t$. Instead we look at the probability of hitting the top of an appropriate tube, and show that if we hit the top of the tube then we stay above $\sqrt2 u-x$ up to time $t$ with positive probability. This last observation was made by Berestycki, Berestycki and Schweinsberg \cite[Proposition 20]{berestycki_et_al:critical_bbm_absorp_sp}; for completeness, and since their notation is different from ours, we include a proof translated into our own setup. This will be our Proposition \ref{ppp}. All other estimates are independent of those in \cite{berestycki_et_al:critical_bbm_absorp_sp}.

Throughout this section we will use the functions
\[f_{t,z}(u) = \sqrt2 u - a(t+1)^{1/3} + z\]
and
\[L_t(u) = a(t+1-u)^{1/3}\]
where $a=a_c = 3^{1/3}\pi^{2/3}2^{-1/2}$. Sometimes we leave out the subscripts, or write $f_z$ instead of $f_{t,z}$, where the parameters are clear from context.
Let
\[p(t,z;s,y) = \P(\xi_u - f_{t,z}(u)\in(0,L_t(u))\hs\forall u\leq s, \hs \xi_s - f_{t,z}(s) \in [L_t(s)-y-1,L_t(s)-y)).\]
We also define
\begin{equation}\label{qdef}
q(t,z;s,y) = yze^{-s-\sqrt2 z + \sqrt2 y}t^{-1/2}(t+1-s)^{-1/2}
\end{equation}
and
\[\tilde q(z;s,y) = e^{-s-\sqrt2 z + \sqrt2 y}(yzs^{-3/2}\wedge 1).\]
We begin by applying Proposition \ref{qasymp} and Lemma \ref{altqasymp} to our particular choice of $f$ and $L$.

\begin{lem}\label{1pb}
If $s\geq t^{2/3}$, $z\in[1,at^{1/3}/2]$ and $y\in[1,a(t-s)^{1/3}/2]$ then
\[p(t,z;s,y) \asymp q(t,z;s,y).\]
If $s\geq t^{2/3}$, then for any $z$ and $y$,
\[p(t,z;s,y) \lesssim q(t,z;s,y+1).\]
If $s\leq t^{2/3}$, then for any $y,z\geq0$,
\[p(t,z;s,y) \lesssim \tilde q(z;s,y).\]
\end{lem}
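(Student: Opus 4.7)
The strategy is to specialise Proposition \ref{qasymp} (for Parts 1 and 2) and Lemma \ref{altqasymp} (for Part 3) to the specific $f = f_{t,z}$ and $L = L_t$ of this section. Since $f'' \equiv 0$, $L'(u) = -\tfrac{a}{3}(t+1-u)^{-2/3}$ and $L''(u) = -\tfrac{2a}{9}(t+1-u)^{-5/3}$, a short calculation shows that for all $s \in [0,t]$ assumption (A) holds with an absolute constant $\bar L$ --- the key estimates being $|L'(s)|L(s) \leq a^2/3$ and $\int_0^s |L''(u)|L(u)du \leq 2a^2/3$ uniformly on $s \in [0,t]$.

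The heart of the proof is the energy calculation. Using $f' \equiv \sqrt2$ and the identity $3\pi^2/(2a^2) = \sqrt2\, a$ (the defining relation of $a_c = 3^{1/3}\pi^{2/3}2^{-1/2}$), one finds
\[\int_0^s \frac{\pi^2}{2L(u)^2}\,du = \sqrt2\, a\bigl[(t+1)^{1/3} - (t+1-s)^{1/3}\bigr],\]
while $f'(s)L(s) = \sqrt2\, a(t+1-s)^{1/3}$ and $f'(0)f(0) = -\sqrt2\, a(t+1)^{1/3} + \sqrt2\, z$. The $(t+1)^{1/3}$ and $(t+1-s)^{1/3}$ contributions cancel exactly, leaving
\[\ec(f_{t,z}, L_t, s) = s + \sqrt2\, z + \tfrac16 \log\tfrac{t+1}{t+1-s}.\]
This cancellation is precisely why $a_c$ is the correct constant.

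For Parts 1 and 2 I apply Proposition \ref{qasymp} with the roles of its $p, q$ played by $\bar p = 1 - (y+1)/L(s)$ and $\bar q = 1 - y/L(s)$; the hypothesis $s \geq \rho_L(\eps_0)$ holds for a uniform $\eps_0$ since $\int_0^{t^{2/3}} L(u)^{-2}\,du \gtrsim 1$. In Part 1, the constraints $z \leq at^{1/3}/2$ and $y \leq a(t-s)^{1/3}/2$ keep $z/L(0)$ and $(y+1)/L(s)$ bounded away from $1$, so $\sin(\pi x/L(0)) = \sin(\pi z/L(0)) \asymp z/(t+1)^{1/3}$ and $\int_{\bar p}^{\bar q} \sin(\pi \nu)\,d\nu \asymp y/L(s)^2$; the exponent $(1-\bar q)f'(s)L(s) = \sqrt2\, y$ (or $\sqrt2(y+1)$ for the upper bound) combines with $e^{-\ec}$ to give exactly $q(t,z;s,y)$, once one notes $t^{1/3}(t+1)^{1/6} \asymp t^{1/2}$ and $L(s)^2 \asymp (t+1-s)^{2/3}$. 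For Part 2 the constraints on $z, y$ are dropped; we replace $\sin$ by the crude bounds $\sin(\pi z/L(0)) \leq \pi z/L(0)$ and $\int_{\bar p}^{\bar q} \sin(\pi\nu)\,d\nu \leq \pi^2(y+1)/L(s)^2$, use the upper-bound exponent $\sqrt2(y+1)$, and thereby obtain $q(t,z;s,y+1)$.

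For Part 3, Proposition \ref{qasymp} may not apply since $\int_0^s L(u)^{-2}\,du$ can be arbitrarily small when $s$ is small, so we call on Lemma \ref{altqasymp} instead. Its exponent evaluates to $-s - \sqrt2\, z + \sqrt2(y+1) + \sqrt2\, a[(t+1)^{1/3}-(t+1-s)^{1/3}]$, and the last bracket is at most $s/[3(t+1-s)^{2/3}] = O(1)$ when $s \leq t^{2/3}$, so it is absorbed into the implicit constant. Using the second option inside the minimum of Lemma \ref{altqasymp}, $(L(0)+f(0))((1-\bar p)^2 - (1-\bar q)^2)L(s)^2/s^{3/2} = z(2y+1)/s^{3/2} \lesssim yz/s^{3/2}$ (for $y \geq 1$; for $y = 0$ one simply uses the trivial $\wedge 1$), combined with the $\wedge 1$, yields $\tilde q(z;s,y)$. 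The main obstacle is purely bookkeeping: keeping track of the cancellations and confirming that the error terms from assumption (A), the logarithmic term in $\ec$, and the boundary behaviour near $s=0$ and $s=t$ all remain $O(1)$; once the uniform $\bar L$ is established and the energy identity is in hand, the rest reduces to elementary arithmetic.
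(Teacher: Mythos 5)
Your proposal follows the paper's proof exactly: verify assumption~(A) for the specific $f_{t,z}$ and $L_t$, compute the energy $\ec(f_{t,z},L_t,s)=s+\sqrt2 z+\tfrac16\log\tfrac{t+1}{t+1-s}$, then apply Proposition~\ref{qasymp} for $s\geq t^{2/3}$ and Lemma~\ref{altqasymp} for $s\leq t^{2/3}$. You simply make explicit the arithmetic the paper compresses into ``and the first two claims follow,'' and your aside about the $y<1$ edge case reflects an imprecision already present in the lemma's own statement rather than a flaw introduced by you.
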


\begin{proof}
We note that
\[\int_0^s \frac{\pi^2}{2L(u)^2}du = \sqrt2 L(0) - \sqrt2 L(s),\]
while
\[\frac{1}{2}\int_0^s f'(u)^2 du = s;\]
thus
\[\ec(f,L,s) = s + \sqrt2 z + \frac16 \log (t+1) - \frac16 \log(t+1-s) + O(1)\]
and it is simple to check that assumption (A) holds. Proposition \ref{qasymp} then tells us that for $s\geq t^{2/3}$,
\begin{multline*}
p(t,z;s,y)\\
\asymp e^{-s - \sqrt2 z + \sqrt2 y} (t+1)^{-1/6}(t+1-s)^{1/6}\sin\left(\frac{\pi z}{a(t+1)^{1/3}}\right)\frac{1}{(t+1-s)^{1/3}}\sin\left(\frac{\pi (y+1)}{a(t+1-s)^{1/3}}\right)
\end{multline*}
and the first two claims follow.
For the final claim, we instead apply Lemma \ref{altqasymp}: since for $s\leq t^{2/3}$, we have $L(0) - L(u) = O(1)$, we get
\[p(t,z;s,y) \lesssim e^{-s - \sqrt2 z + \sqrt2 y}(yz s^{-3/2} \wedge 1) = \tilde q(z;s,y).\qedhere\]
\end{proof}

We now work towards counting the number of particles at the top of the tube, again by estimating moments. Let
\begin{multline*}
M_{t,z}(u) = \{v\in N(u) : X_v(r)-f_{t,z}(r) \in (0,L_t(r)) \hsl\forall r\leq u,\\
X_v(u)-f_{t,z}(u) \in [L_t(u)-2,L_t(u)-1)\}.
\end{multline*}
We also abuse notation by writing $\xi\in M_{t,z}(u)$ if $\xi_r-f_{t,z}(r) \in (0,L_t(r))$ $\forall r\leq u$ and $\xi_u - f_{t,z}(u)\in[L_t(u)-2,L_t(u)-1)$.

\begin{lem}\label{1mom}
If $u\geq t^{2/3}$, then
\[\E[\#M_{t,z}(u)] \asymp ze^{-\sqrt2 z}t^{-1/2}(t+1-u)^{-1/2}.\]
\end{lem}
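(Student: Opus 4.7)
The plan is to reduce the expected count to a single-particle probability via the many-to-one lemma and then invoke Lemma \ref{1pb}. The event defining $M_{t,z}(u)$ is precisely the event in the definition of $p(t,z;s,y)$ with $s=u$ and $y=1$, so the many-to-one lemma gives
\[\E[\#M_{t,z}(u)] = e^u \, \P(\xi \in M_{t,z}(u)) = e^u \, p(t,z;u,1).\]
Substituting the definition (\ref{qdef}) of $q$ at $y=1$ will produce $e^u q(t,z;u,1) = e^{\sqrt 2}\,z e^{-\sqrt 2 z} t^{-1/2}(t+1-u)^{-1/2}$, which is the target up to an absolute constant absorbed by $\asymp$.

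For the upper bound, I would apply the second claim of Lemma \ref{1pb}, which is valid for any $z$ and $y$ as long as $u\ge t^{2/3}$: $p(t,z;u,1) \lesssim q(t,z;u,2)$. Unwinding $q$, the factor $e^{2\sqrt 2}$ is an absolute constant and the resulting bound matches $ze^{-\sqrt 2 z} t^{-1/2}(t+1-u)^{-1/2}$ up to an absolute constant after multiplying by $e^u$.

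For the lower bound, I would apply the first claim of Lemma \ref{1pb}, which needs $z\in[1,at^{1/3}/2]$ and $y=1\in[1,a(t-u)^{1/3}/2]$. The constraint on $z$ is the standing hypothesis on $z$ inherited from Theorem \ref{bram}, in whose proof this lemma is used; the constraint on $y=1$ simply requires $t-u\ge (2/a)^3$. In this regime, $p(t,z;u,1) \asymp q(t,z;u,1)$ and the lower bound follows from the computation in the first paragraph.

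The only subtlety is the narrow edge window $u\in (t-(2/a)^3,t]$, where Lemma \ref{1pb}'s first claim does not directly apply. In this window $(t+1-u)^{-1/2}$ is bounded above and below by absolute constants, so it suffices to show $\E[\#M_{t,z}(u)]\gtrsim ze^{-\sqrt 2 z}t^{-1/2}$. I would deduce this from the main regime already handled, via the Markov property at a deterministic time $u-\delta$ for some sufficiently small constant $\delta>0$ (noting that $t-(u-\delta)\ge (2/a)^3$ thanks to $a>2$): given one BBM particle in $M_{t,z}(u-\delta)$, a direct Brownian estimate on the interval $[u-\delta,u]$ (whose tube width is bounded away from $0$) shows that with probability bounded below by an absolute constant a descendant of that particle remains in the tube and lands in $[L_t(u)-2,L_t(u)-1)$ at time $u$. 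The main obstacle is purely bookkeeping around this narrow edge case; the substance of the estimate is a one-line application of many-to-one together with Lemma \ref{1pb}.
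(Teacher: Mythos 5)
Your proposal matches the paper's proof: the paper also applies the many-to-one lemma to get $\E[\#M_{t,z}(u)] = e^u p(t,z;u,1)$ and then cites Lemma \ref{1pb}. You additionally identify and patch a corner case that the paper's one-line ``the result now follows from Lemma \ref{1pb}'' glosses over: when $t-u < (2/a)^3$ the interval $[1, a(t-u)^{1/3}/2]$ is empty, so the first (two-sided) claim of Lemma \ref{1pb} with $y=1$ does not apply directly, and your Markov-property argument at time $u-\delta$ is a correct and reasonable way to supply the lower bound there (the paper never actually invokes Lemma \ref{1mom} in that edge regime, since in Lemma \ref{Mlem} one has $u\le 2t/3$, but as a proof of the lemma as stated your extra care is warranted).
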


\begin{proof}
By the many-to-one lemma,
\[\E[\#M_{t,z}(u)] = e^u \P(\xi\in M_{t,z}(u)) = e^u p(t,z;u,1).\]
The result now follows from Lemma \ref{1pb}.
\end{proof}

\begin{lem}\label{2mom}
If $m,n\in[t/3,2t/3]$ and $z\geq1$,
\[\E[(\#M_{t,z}(m))(\#M_{t,z}(n))] \leq ze^{-\sqrt2 z}t^{-1}\Big(t^{-1} + t^{-1/2}(1+|n-m|)^{-1/2} + (1+|n-m|)^{-3/2}\Big).\]
(Note that $m$ and $n$ need not necessarily be integers here, although they will be integers when we apply this result.)
\end{lem}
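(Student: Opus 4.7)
Without loss of generality assume $m\leq n$. The plan is to apply the many-to-two lemma for BBM, which expresses
\[
\E[\#M_{t,z}(m)\#M_{t,z}(n)] = e^n\,\P\bigl(\xi\in M_{t,z}(m)\cap M_{t,z}(n)\bigr) + 2\int_0^m e^{m+n-s}\,\P(\Theta_s\mid T=s)\,ds,
\]
where the first (diagonal) term accounts for pairs $(u,v)\in N(m)\times N(n)$ with $v$ a descendant of $u$, and the integrand records contributions from pairs whose most recent common ancestor lies at some time $s<m$. Conditional on $T=s$, the event $\Theta_s$ asks that the shared Brownian trajectory lies in the tube $(f_{t,z},f_{t,z}+L_t)$ on $[0,s]$ and that each of the two conditionally independent continuations meets the tube-plus-top-window requirements defining $M_{t,z}(m)$ and $M_{t,z}(n)$ respectively.

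For the diagonal term I would use the Markov property at time $m$: Lemma~\ref{1mom} supplies $p(t,z;m,1)\asymp ze^{-m-\sqrt 2 z}/t$, and conditional on being near the top of the tube at time $m$ the continuation must travel from the top at $m$ to the top at $n$ while staying in the tube. For $n-m\geq t^{2/3}$ a second application of Proposition~\ref{qasymp} on $[m,n]$ (after a time shift) gives the transition probability as $\asymp e^{-(n-m)}/t$. For $n-m<t^{2/3}$ Proposition~\ref{qasymp} is no longer available; instead I would remove the drift $\sqrt 2$ via Girsanov, estimate the one-dimensional Gaussian density at the forced endpoint $\approx\sqrt 2(n-m)$, and then enforce the top-of-tube constraint with the Brownian bridge reflection formula. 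The net transition probability comes out as $\lesssim e^{-(n-m)}(1+n-m)^{-3/2}$, so multiplying by $e^n p(t,z;m,1)$ produces the $(1+|n-m|)^{-3/2}$ contribution.

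For the off-diagonal integral I would decompose $\Theta_s$ according to the integer distance $j$ of the split position $\xi_s$ from the top of the tube at time $s$. Lemma~\ref{1pb} bounds the common-trajectory factor by $q(t,z;s,j+1)$, while each continuation is controlled by Proposition~\ref{qasymp} when its length exceeds $t^{2/3}$ and by Lemma~\ref{altqasymp} otherwise. The exponentials combine so that the net $j$-dependence decays as $e^{-\sqrt 2 j}$ times a polynomial in $j$, so the sum over $j$ converges. I would then partition the $s$-integral into three subintervals—(I) both continuations long, (II) only branch~1 short, and (when $|n-m|\leq t^{2/3}$) (III) both short—and estimate each separately, using $\int(t-s)^{-3/2}\,ds$ in (I) and $\int\min(1,(m-s+1)^{-3/2})\min(1,(n-s+1)^{-3/2})\,ds$ in (II)--(III) after the substitution $u=m-s$.

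The main obstacle is selecting the correct alternative inside the minimum in Lemma~\ref{altqasymp}. Because the split point is exponentially pinned near the top of the tube, the relevant quantity for each branch is $\hat L(0)+\hat f(0)=j$ rather than $-\hat f(0)=L(s)-j\asymp t^{1/3}$; using the second alternative produces the $j/(m-s+1)^{3/2}$ scaling whose interaction with the $e^{-\sqrt 2 j}$ sum gives the correct power of $t$ and $|n-m|$, whereas the first alternative would overestimate by a factor of $L(m)$. With the alternatives chosen correctly, each of the three off-diagonal subregimes contributes a piece bounded by one of the first two terms in the statement, and collecting everything with the diagonal gives the three-term upper bound.
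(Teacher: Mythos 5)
Your decomposition is the same as the paper's: the many-to-two lemma, the diagonal term handled via the Markov property at time $m$ with Proposition \ref{qasymp} when $n-m\gtrsim t^{2/3}$ and the Girsanov-plus-reflection estimate (which is exactly Lemma \ref{altqasymp}, i.e.\ the content of $\tilde q$) otherwise, and the off-diagonal integral handled by integrating out the split position and trading each of the three factors for $q$ or $\tilde q$ according to whether the corresponding time interval exceeds $t^{2/3}$. Your remark about which alternative to take in the minimum of Lemma \ref{altqasymp} --- that the relevant small quantity for a continuation is its starting distance $j$ from the \emph{top} of the tube, not the distance $\asymp t^{1/3}$ from the bottom --- is exactly right and is precisely what the definition of $\tilde q$ in Lemma \ref{1pb} encodes.

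There is, however, one step that fails as written. You bound the common-trajectory factor by $q(t,z;s,j+1)$ for \emph{all} split times $s\leq m$, but Lemma \ref{1pb} gives $p(t,z;s,y)\lesssim q(t,z;s,y+1)$ only for $s\geq t^{2/3}$, and the inequality is genuinely false for small $s$: at $s=O(1)$ with $y\approx z=O(1)$ the true probability $p(t,z;s,y)$ is of constant order, whereas $q(t,z;s,y+1)\asymp zt^{-1}$. Your partition of the $s$-integral is keyed only to the lengths $m-s$ and $n-s$ of the two continuations, so the regime $s\leq t^{2/3}$ --- where both continuations are long but the \emph{common} segment is short --- gets absorbed into your case (I) and treated with an estimate that is an underestimate of the relevant probability. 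The paper's proof has a fourth case here, replacing $q(t,z;r,y)$ by $\tilde q(z;r,y)$ for $r\leq t^{2/3}$; the corrected integrand carries a factor $(r+1)^{-3/2}$ in place of one power of $t^{-1/2}$, and since $\int_0^{t^{2/3}}(r+1)^{-3/2}\,dr=O(1)$ the contribution still lands inside the $t^{-2}$ term, so the conclusion is unaffected --- but the extra case is needed for the argument to be valid. Relatedly, the ``$\int(t-s)^{-3/2}\,ds$'' you invoke in case (I) does not occur in the correct computation: in the all-$q$ regime the $r$-integrand (after multiplying by $e^{m+n-r}$) is constant of order $ze^{-\sqrt2 z}t^{-3}$ and one simply integrates over an interval of length $O(t)$; the $(r+1)^{-3/2}$ decay belongs to the small-$r$ case you omitted.
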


\begin{proof}
First suppose that $m\leq n$. We apply the many-to-two lemma (specifically Example 6 of \cite{harris_roberts:many_to_few}), which tells us that
\begin{multline*}
\E[(\#M_{t,z}(m))(\#M_{t,z}(n))]= e^n \P(\xi \in M_{t,z}(m)\cap M_{t,z}(n))\\
+ 2\int_0^m e^{n+m-r}\P(\xi^{(1)} \in M_{t,z}(m), \hsl \xi^{(2)}\in M_{t,z}(n)|T=r)dr.
\end{multline*}
where $\xi^{(1)}$ and $\xi^{(2)}$ are the dependent Brownian motions defined in the proof of Lemma \ref{jaff2mom}, and $T$ is their split time. Now, integrating out the value of $\xi^{(1)}_r$ and then applying Lemma \ref{1pb}, if $r\in[t^{2/3},m-t^{2/3}]$,
\begin{multline*}
\P(\xi^{(1)} \in M_{t,z}(m), \hsl \xi^{(2)}\in M_{t,z}(n)|T=r)\\
\asymp \int_0^{L_t(r)} p(t,z;r,y)p(t-r,y;m-r,1)p(t-r,y;n-r,1)dy\\
\lesssim \int_0^{L_t(r)} q(t,z;r,y+1)q(t-r,y;m-r,2)q(t-r,y;n-r,2)dy.
\end{multline*}
Recalling the definition (\ref{qdef}) of $q$, an easy calculation reveals that for $r\in[t^{2/3},m-t^{2/3}]$,
\[\P(\xi^{(1)} \in M_{t,z}(m), \hsl \xi^{(2)}\in M_{t,z}(n)|T=r) \lesssim z e^{-m-n+r - \sqrt2 z}t^{-3}.\]
If $r\leq t^{2/3}$, then we replace $q(t,z;r,y)$ with $\tilde q(z;r,y)$ and get
\[\P(\xi^{(1)} \in M_{t,z}(m), \hsl \xi^{(2)}\in M_{t,z}(n)|T=r) \lesssim z e^{-m-n+r - \sqrt2 z}(r+1)^{-3/2}t^{-2}.\]
If $r\in [m-t^{2/3},n-t^{2/3}]$, then we instead replace $q(t-r,y;m-r,1)$ with $\tilde q(y;m-r,1)$ and get
\[\P(\xi^{(1)} \in M_{t,z}(m), \hsl \xi^{(2)}\in M_{t,z}(n)|T=r) \lesssim z e^{-m-n+r - \sqrt2 z}(m+1-r)^{-3/2}t^{-2}.\]
If $r\geq (n-t^{2/3})$, then we replace both the latter $q$ factors with the appropriate $\tilde q$ and get
\[\P(\xi^{(1)} \in M_{t,z}(m), \hsl \xi^{(2)}\in M_{t,z}(n)|T=r) \lesssim z e^{-m-n+r - \sqrt2 z}t^{-1}(m+1-r)^{-3/2}(n+1-r)^{-3/2}.\]
Putting these estimates together, we obtain
\[\int_0^m e^{n+m-r}\P(\xi^{(1)} \in M_{t,z}(m), \hsl \xi^{(2)}\in M_{t,z}(n)|T=r)dr \lesssim ze^{-\sqrt2 z}\big(t^{-2} + t^{-1}(1+n-m)^{-3/2}\big).\]
We also have that when $m\leq n-t^{2/3}$,
\begin{align*}
\P(\xi\in M_{t,z}(m)\cap M_{t,z}(n)) &\asymp q(t,z;m,1)q(t-m,1;n-m,1)\\
&\lesssim ze^{-n-\sqrt2 z}t^{-3/2}(n+1-m)^{-1/2}
\end{align*}
and when  $m\in(n-t^{2/3},n]$,
\begin{align*}
\P(\xi\in M_{t,z}(m)\cap M_{t,z}(n)) &\lesssim q(t,z;m,1)\tilde q(1;n-m,1)\\
&\lesssim ze^{-n-\sqrt2 z}t^{-1}(n+1-m)^{-3/2}.
\end{align*}
By symmetry we get similar results when $m>n$, and obtain the stated result.
\end{proof}

We want to count the number of particles hitting the top of the $L$-tube whose ancestors have never hit either boundary of the tube. In fact, to get a lower bound we will restrict to those hitting the top of the tube between times $t/3$ and $2t/3$. For a particle $v$, let $\sigma_v$ be its birth time and $\tau_v$ its time of death. Let
\begin{multline*}
J_{t,z} = \Big\{ v \in \bigcup_{u\geq0} N(u) : \exists s\in [\sigma_v , \tau_v)\cap [t/3,2t/3) \hbox{ with }  X_v(s) - f_{t,z}(s) = L_t(s),\\
X_v(u) - f_{t,z}(u) \in (0,L_t(u)) \hs \forall u\leq s \Big\}.
\end{multline*}
Otherwise said, if we imagine particles being absorbed if they hit either boundary of the tube, then $J_{t,z}$ is the number of particles absorbed at the top of the tube between times $t/3$ and $2t/3$.

\begin{lem}\label{Mlem}
For $z\in[0, a(t+1)^{1/3}/2)$,
\[\P(J_{t,z}\neq\emptyset) \gtrsim z e^{-\sqrt2 z}.\]
\end{lem}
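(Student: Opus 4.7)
The plan is to combine a second-moment argument on a surrogate count with a short Markov-property step that converts ``near the top'' into ``actually hits the top''. Set
\[
N \;=\; \sum_{u=\lceil t/3\rceil}^{\lfloor 2t/3\rfloor-1}\#M_{t,z}(u).
\]
First consider $z\in[1,at^{1/3}/2]$. Lemma~\ref{1mom} gives $\E[\#M_{t,z}(u)]\asymp ze^{-\sqrt{2}z}/t$ uniformly for $u$ in this range, so $\E[N]\asymp ze^{-\sqrt{2}z}$. For the second moment I would substitute the bound of Lemma~\ref{2mom} into $\E[N^2]=\sum_{m,n}\E[\#M_{t,z}(m)\#M_{t,z}(n)]$ and use the elementary estimates $\sum_{m,n}(1+|n-m|)^{-1/2}\lesssim t^{3/2}$ and $\sum_{m,n}(1+|n-m|)^{-3/2}\lesssim t$. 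Each of the three resulting contributions is at most a constant times $ze^{-\sqrt{2}z}$, giving $\E[N^2]\lesssim ze^{-\sqrt{2}z}$. Paley--Zygmund then yields
\[
\P(N\geq 1) \;\geq\; \frac{\E[N]^2}{\E[N^2]} \;\gtrsim\; ze^{-\sqrt{2}z}.
\]

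To convert this into a bound on $\P(J_{t,z}\neq\emptyset)$, let $T$ be the smallest integer in $[\lceil t/3\rceil,\lfloor 2t/3\rfloor-1]$ with $\#M_{t,z}(T)\geq 1$ (so $\{T<\infty\}=\{N\geq 1\}$), and pick any $v^*\in M_{t,z}(T)$. By the definition of $M_{t,z}(T)$, the ancestral line of $v^*$ stays strictly inside the tube up to time $T$, and $X_{v^*}(T)$ lies within constant distance below the upper boundary. Conditional on $\F_T$, the subtree rooted at $v^*$ is an independent BBM; over $[T,T+1]$ the upper boundary moves with slope $\sqrt{2}+O(t^{-2/3})$, while the lower boundary sits at distance $\Theta(t^{1/3})$ below $v^*$. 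A standard one-dimensional Brownian hitting estimate therefore supplies a universal constant $c_0>0$ such that even the single path $X_{v^*}$ (ignoring branching) first hits the upper boundary at some $r\in[T,T+1)\subseteq[t/3,2t/3)$ with conditional probability at least $c_0$; the resulting particle then lies in $J_{t,z}$. Hence
\[
\P(J_{t,z}\neq\emptyset) \;\geq\; c_0\,\P(N\geq 1) \;\gtrsim\; ze^{-\sqrt{2}z}.
\]
The remaining range $z\in[0,1]$ is handled by monotonicity: translating both the tube and the starting position by $-z$ shows that $\P(J_{t,z}\neq\emptyset)$ is non-increasing in $z$, so for $z\in[0,1]$ it is bounded below by the $z=1$ value, which is of order $1\gtrsim ze^{-\sqrt{2}z}$.

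The main obstacle is the second-moment computation: all three terms coming from Lemma~\ref{2mom} must aggregate to exactly the same order $ze^{-\sqrt{2}z}$ as $\E[N]$, since any slack of a factor of $t$ would be fatal for Paley--Zygmund. The final Markov step is by contrast fairly routine, since we only need to push the particle a bounded distance upward over unit time rather than keep it alive and near the top all the way out to time $t$; that much deeper statement is the content of Proposition~20 of \cite{berestycki_et_al:critical_bbm_absorp_sp}, which we do not invoke here.
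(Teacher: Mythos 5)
Your core argument is the same as the paper's: apply Paley--Zygmund/Cauchy--Schwarz to the discrete-time count $\sum_j\#M_{t,z}(j)$ using Lemmas~\ref{1mom} and \ref{2mom}, then pass from ``near the top at an integer time'' to ``actually hits the top boundary'' via the Markov property. The paper compresses the latter step to ``it is easy to see''; your version spells it out, which is fine, and your second-moment bookkeeping is correct.

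There is, however, a genuine error in your treatment of $z\in[0,1)$. The claimed monotonicity of $z\mapsto\P(J_{t,z}\neq\emptyset)$ is false. The cleanest counterexample is $z=0$: since $\xi_0-f_{t,0}(0)=a(t+1)^{1/3}=L_t(0)$, the initial particle sits \emph{on} the upper boundary, so the requirement $X_v(u)-f_{t,0}(u)\in(0,L_t(u))$ for all $u\le s$ fails already at $u=0$ and $J_{t,0}=\emptyset$ almost surely, whereas $\P(J_{t,1}\neq\emptyset)>0$. So the function is not non-increasing, and it cannot be bounded below on $[0,1]$ by its value at $z=1$. (The phrase ``translating both the tube and the starting position by $-z$'' is also not an argument: translating everything by the same amount is a measure-preserving relabelling and proves nothing.) The intuition fails because raising the tube pushes the start further from the upper boundary but also further from the lower one, and for small $z$ the dominant effect is that the start is dangerously close to the \emph{top}, where hitting before time $t/3$ (or starting on the boundary itself) disqualifies the particle.

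Fortunately this is a peripheral gap: for $z\in(0,1)$ you don't need monotonicity, because the moment estimates themselves still apply. The only place $z\ge1$ enters Lemma~\ref{1pb} is in replacing $\sin(\pi z/(a(t+1)^{1/3}))$ by $z/t^{1/3}$, and that two-sided comparison is valid for every $z\in(0,a(t+1)^{1/3}/2]$; consequently the conclusions of Lemmas~\ref{1mom} and \ref{2mom} and the resulting bound $\P(N\ge1)\gtrsim ze^{-\sqrt2 z}$ hold for all such $z$, with $z=0$ giving the trivial bound $0$. Your Markov step is also $z$-independent (the particle in $M_{t,z}(T)$ sits at distance in $[1,2)$ below the top regardless of $z$), so running your own argument uniformly over $z\in(0,a(t+1)^{1/3}/2)$ closes the gap without any monotonicity. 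Note the paper's own proof likewise quietly uses these lemmas outside their stated $z\ge1$ hypothesis.
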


\begin{proof}
Define
\[\mathcal N_t = \mathbb N \cap [t/3,2t/3)\]
and
\[\tilde J_{t,z} = \sum_{j\in \mathcal N_t}\# M_{t,z}(j).\]
It is easy to see that
\begin{equation}\label{Ms}
\P(J_{t,z}\neq\emptyset) \gtrsim \P(\tilde J_{t,z}\geq 1 ).
\end{equation}
By Cauchy-Schwarz,
\[\P(\tilde J_{t,z}\geq 1 ) \geq \frac{\E[\tilde J_{t,z}]^2}{\E[\tilde J_{t,z}^2]}.\]
Applying Lemma \ref{2mom},
\begin{align*}
\E[\tilde J_{t,z}^2] & = \sum_{m,n\in\mathcal N_t} \E\left[(\#M_{t,z}(m))(\#M_{t,z}(n))\right]\\
&\lesssim ze^{-\sqrt2 z}t^{-1}\sum_{m,n\in \mathcal N_t} \Big(t^{-1}+t^{-1/2}(1+n-m)^{-1/2}+(1+n-m)^{-3/2}\Big) \hsl \lesssim \hsl ze^{-\sqrt2 z}.
\end{align*}
But by Lemma \ref{1mom},
\[\E[\tilde J_{t,z}] \asymp \sum_{j\in \mathcal N_t} ze^{-\sqrt2 z}t^{-1} \asymp ze^{-\sqrt2 z},\]
so
\[\P(\tilde J_{t,z}\geq 1 ) \gtrsim \frac{(ze^{-\sqrt2 z})^2}{ze^{-\sqrt2 z}} = ze^{-\sqrt2 z}\]
as required.
\end{proof}

We have established that the probability of a particle hitting the top of the $L_t$-tube at some time $s\in[t/3,2t/3]$, and having never gone below $f_{t,z}$, behaves as we would like. We now wish to show that such a particle has a positive probability, independent of $t$ and $s$, of staying above $f_{t,z}$ up to time $t$. For this we follow \cite{berestycki_et_al:critical_bbm_absorp_sp}. The idea is this: suppose that a particle $v$ has hit the top of the tube. Then $v$ will, with reasonable probability, have a large number of descendants just below the top of the tube a short time later. Each of these descendants then has a reasonable probability of hitting the top of the tube again. Each descendant of $v$ that hits the top of the tube again we call a {\em tube child} of $v$. We show that this concept of tube children can be used to build a family tree very much like a Galton-Watson process, and calculate that the associated offspring distribution has mean larger than 1. Such a Galton-Watson process survives with strictly positive probability, and the survival of the family tree entails that some particle stays above $f_{t,z}$ up to time $t$. We now give details of this heuristic.

We use the following result of Neveu \cite{neveu:multiplicative_martingales_spatial_bps} to quantify the statement that a particle which hits the top of the tube will, with reasonable probability, have a large number of descendants just below the top of the tube a short time later. Define
\[K(y) = \#\bigg\{ v \in \bigcup_{t\geq0} N(t) : X_v(u) > \sqrt2 u - y \hs \forall u<\sigma_v, \hs \exists s\in [\sigma_v,\tau_v) \hbox{ with } X_v(s) = \sqrt2 s - y \bigg\}.\]
That is, $K(y)$ counts the number of particles that hit the line $(\sqrt2 s - y, s\geq0)$ whose ancestors have never hit the same line. In other words, if we imagine particles being absorbed when they hit the line, then $K(y)$ counts the total number of particles absorbed. Later it will also be useful to consider
\[K(y,t) = \#\bigg\{ v \in \bigcup_{r\geq0} N(r) : X_v(u) > \sqrt2 u - y \hs \forall u<\sigma_v, \hs \exists s\in [\sigma_v,\tau_v\wedge t) \hbox{ with } X_v(s) = \sqrt2 s - y \bigg\},\]
the number of particles in $K(y)$ absorbed before time $t$.

\begin{lem}[Neveu \cite{neveu:multiplicative_martingales_spatial_bps}]\label{neveu}
There exists a random variable $W$ taking values in $(0,\infty)$ with $\E[W]=\infty$ such that
\[ye^{-\sqrt2 y} K(y) \to W\]
almost surely as $y\to\infty$.
\end{lem}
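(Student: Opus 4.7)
Proof plan. Set $\hat X_v(s) = \sqrt 2 s - X_v(s)$ to convert the problem into counting the set $\mathcal{S}_y$ of particles in a BBM with drift $+\sqrt 2$ that first reach level $y$, so that $K(y) = |\mathcal{S}_y|$. Because the drift is positive, every particle eventually hits $y$ and $K(y) < \infty$ a.s.; a many-to-one computation gives $\E[K(y)] = e^{\sqrt 2 y}$.

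The backbone of the argument is a cascade identity: by the strong branching property, for $y_1 < y$,
\[K(y) = \sum_{v \in \mathcal{S}_{y_1}} K^{(v)}(y - y_1),\]
where conditionally on $\mathcal F_{y_1}$ (the $\sigma$-algebra generated by the BBM up to the stopping line at level $y_1$), the $K^{(v)}(y - y_1)$ are iid copies of $K(y - y_1)$. This makes $e^{-\sqrt 2 y} K(y)$ a non-negative martingale in $y$ with initial value $1$; critical-martingale arguments force it to converge a.s.\ to $0$, so a logarithmic correction in $y$ is needed. The correct normalisation is identified through the Laplace transform $\psi(y, \lambda) = \E[e^{-\lambda K(y)}]$, which by Feynman--Kac satisfies the travelling-wave ODE
\[\tfrac12 \psi'' - \sqrt 2 \psi' + \psi^2 - \psi = 0, \hspace{3mm} \psi(0, \lambda) = e^{-\lambda}, \hspace{3mm} \psi(y, \lambda) \to 0 \text{ as } y \to \infty.\]

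Bramson-type asymptotic analysis of this ODE in the scaling window $\lambda \asymp y e^{-\sqrt 2 y}$ yields $\psi(y, \theta y e^{-\sqrt 2 y}) \to \E[e^{-\theta W}]$ for a non-degenerate random variable $W \in (0, \infty)$, giving $y e^{-\sqrt 2 y} K(y) \to W$ in distribution. Upgrading to almost-sure convergence uses the cascade identity: for $y_1 \to \infty$ slowly and $y - y_1 \to \infty$, the iid random variables $(y-y_1) e^{-\sqrt 2 (y-y_1)} K^{(v)}(y - y_1)$ are close to iid copies of $W$ (by distributional convergence plus tightness), and a Borel--Cantelli argument along a geometric sequence $y_n$ turns this into a.s.\ Cauchy-ness of $y e^{-\sqrt 2 y} K(y)$. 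Positivity $W > 0$ a.s.\ follows from a zero-one argument on $\{W = 0\}$ via the cascade, and $\E[W] = \infty$ encodes the failure of $L^1$-convergence of the critical additive martingale.

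The main obstacle is the ODE asymptotic in the delicate $\lambda \asymp y e^{-\sqrt 2 y}$ window --- essentially the same matched-asymptotic input as in Bramson's sharp FKPP asymptotics. The upgrade from distributional to almost-sure convergence through the cascade is also non-trivial because $\E[W] = \infty$ prevents standard LLN-based arguments and requires tightness together with a consistency argument.
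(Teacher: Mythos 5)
The paper does not prove this lemma; it is cited directly from Neveu. Your sketch correctly identifies most of the machinery --- the stopping-line decomposition, the additive martingale $e^{-\sqrt2 y}K(y)$ with mean $1$ that dies almost surely, the Laplace transform $\psi(y,\lambda)=\E[e^{-\lambda K(y)}]$ governed by the FKPP travelling-wave ODE, and the role of the tail asymptotic $1-\Phi(x)\sim Cxe^{-\sqrt2 x}$. But the step you gesture at to pass from convergence in distribution to almost-sure convergence is exactly where your proposal does not close. You rightly flag that $\E[W]=\infty$, so the cascade $K(y)=\sum_{v\in\mathcal{S}_{y_1}}K^{(v)}(y-y_1)$ is a sum of heavy-tailed terms and no law of large numbers applies; but ``tightness together with a consistency argument'' and ``Borel--Cantelli along a geometric sequence'' is not an argument --- nothing in what you wrote controls $|ye^{-\sqrt2 y}K(y)-y'e^{-\sqrt2 y'}K(y')|$ with summable failure probabilities.

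Neveu's actual route sidesteps this entirely by going through the \emph{multiplicative} martingale. Take $\Phi$ solving $\tfrac12\Phi''+\sqrt2\Phi'+\Phi^2-\Phi=0$ with $\Phi(-\infty)=0$, $\Phi(+\infty)=1$, so that $1-\Phi$ is the speed-$\sqrt2$ FKPP wave. The same ODE you wrote, read via the branching property, gives the fixed-point identity $\E[\Phi(y+z)^{K(z)}]=\Phi(y)$, which says precisely that $M_y:=\Phi(y)^{K(y)}$ is a martingale with respect to the stopping-line filtration. Since $M_y\in[0,1]$, it converges almost surely with no further input, and then
\[-\log M_y=-K(y)\log\Phi(y)\sim K(y)\bigl(1-\Phi(y)\bigr)\sim C\,y e^{-\sqrt2 y}K(y),\]
so $ye^{-\sqrt2 y}K(y)$ converges almost surely to $W:=-\tfrac{1}{C}\log M_\infty$. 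The remaining facts ($W\in(0,\infty)$ a.s., $\E[W]=\infty$) then follow by the zero--one and non-uniform-integrability arguments you allude to. In short, you have the right ingredients but run them through the additive martingale plus a Borel--Cantelli upgrade that does not work; the bounded multiplicative martingale gives the almost-sure statement directly and is the actual mechanism in Neveu's proof.
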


We now build our family tree and carry out the rest of the above heuristic. Again we stress that the proof of Proposition \ref{ppp} is based on \cite[Proposition 20]{berestycki_et_al:critical_bbm_absorp_sp}.

\begin{prop}\label{ppp}
There exists $\delta>0$ such that for all large $t$,
\[\P(\exists v\in N(t): X_v(s) > \sqrt2 s - at^{1/3} \hs \forall s\leq t) \geq \delta.\]
\end{prop}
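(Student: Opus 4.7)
My plan is to reduce the claim to survival of a Galton--Watson-like family tree of \emph{tube children}, following the strategy the authors signal. Work with the tube determined by $f_{t,z^*}(u) = \sqrt{2}u - at^{1/3}$ (taking $z^* = a(t+1)^{1/3}-at^{1/3} = O(t^{-2/3})$) and $L_t(u) = a(t+1-u)^{1/3}$, so that the root at $(0,0)$ sits on the top of the tube and any descendant whose ancestral path stays above $f_{t,z^*}$ up to time $t$ witnesses the desired event. Given a particle $v$ that first hits the top of the tube at some time $s_v$, call $w$ a \emph{tube child} of $v$ if the ancestral path from $v$ to $w$ stays strictly inside the tube between $s_v$ and the first later time $s_w \in [s_v + (t-s_v)/3,\, s_v + 2(t-s_v)/3]$ at which some ancestor of $w$ hits the top of the tube again (as in the definition of $J_{t,z}$). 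A chain of tube-child relations from the root keeps the relevant particle inside the tube throughout $[0,t]$.

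The heart of the argument is an offspring estimate: conditional on $v$ hitting the top at time $s_v\leq 2t/3$, the number of tube children has mean at least $1+\varepsilon$ and bounded second moment, uniformly in $t$ and $s_v$. Fix a large constant $K$. Over bounded time after $s_v$ the top of the tube deviates from a line of slope $\sqrt{2}$ through $v$'s position by $a(t+1-s_v-r)^{1/3}-a(t+1-s_v)^{1/3} = O(r/(t-s_v)^{2/3}) = o(1)$, so Lemma~\ref{neveu} applies in coordinates centred at $v$. It gives that with probability at least $\P(W\geq M)$ the particle $v$ has at least $Me^{\sqrt{2}K}/K$ descendants absorbed at the line at height $-K$ below $v$ (at various bounded times). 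Each such descendant sits at distance $K+O(1)$ below the top of the ``sub-tube'' determined by parameters $t'=t-\text{(absorption time)}\in[t/3,2t/3]$ and $z'=K+O(1)$, so by the strong Markov property and Lemma~\ref{Mlem} each independently produces a tube child with probability $\gtrsim Ke^{-\sqrt{2}K}$. The expected number of tube children is therefore $\gtrsim cM\P(W\geq M)$, and since $\E[W]=\infty$ makes $M\P(W\geq M)$ unbounded, we can choose $M$ to push the mean above $1+\varepsilon$.

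For the second moment I would use independence of the post-absorption evolutions to get a bound of order $K(K)^2(Ke^{-\sqrt{2}K})^2$ plus a linear-in-$K(K)$ term. Since $\E[K(K)^2]$ is not controlled by Neveu, I would truncate at the event $\{K(K)\leq Ce^{\sqrt{2}K}/K\}$ for a large $C$, losing only a small contribution to the mean and producing a variance comparable to the squared mean. With mean strictly above $1$ and bounded second moment, the tube-children form a supercritical Galton--Watson-like process whose survival to generation $n$ has probability bounded below uniformly in $n$ (Paley--Zygmund applied at each generation, then concatenated). Since the remaining time contracts by at most a factor $2/3$ each generation, only $O(\log t)$ generations fit before time $t$, so this uniform lower bound yields the required $\delta>0$.

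The main obstacle is the second-moment truncation: Neveu's statement is purely asymptotic with $\E[W]=\infty$, so one needs a quantitative version at fixed large $K$ that simultaneously (a) keeps the offspring mean above $1$ and (b) yields a usable variance. A secondary technicality is the final generation, when the sub-tube width shrinks to $O(1)$ and the sharp estimates of Lemma~\ref{Mlem}/Lemma~\ref{1pb} degrade; these last steps are best handled directly using Lemma~\ref{altqasymp} rather than via the GW iteration.
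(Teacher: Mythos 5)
Your overall architecture --- a Galton--Watson-like tree of ``tube children'', with Neveu's theorem supplying many descendants a fixed distance below the top of the tube and Lemma \ref{Mlem} supplying the probability that each returns to the top --- is exactly the paper's strategy. But your supercriticality step has a genuine gap. You argue that with probability at least $\P(W\geq M)$ the particle has at least $Me^{\sqrt2 K}/K$ descendants absorbed at level $K$ below it, each producing a tube child with probability $\gtrsim Ke^{-\sqrt2 K}$, so the offspring mean is $\gtrsim M\P(W\geq M)$, and you then claim that $\E[W]=\infty$ forces $M\P(W\geq M)$ to be unbounded. That implication is false: for instance $\P(W>M)=1/(M\log M)$ gives $\E[W]=\infty$ while $M\P(W\geq M)\to0$. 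And even if Neveu's $W$ happened to satisfy $\liminf_M M\P(W\geq M)=c>0$, your bound only produces a fixed constant $c$ times the (uncontrolled, possibly small) implicit constant from Lemma \ref{Mlem}, which need not exceed $1$. The paper avoids this by working with the \emph{mean} rather than with a high-probability event: since $ze^{-\sqrt2 z}K(z)\to W$ almost surely and $\E[W]=\infty$, Fatou's lemma gives $ze^{-\sqrt2 z}\E[K(z)]\to\infty$, so for the $\eps$ coming out of Lemma \ref{Mlem} one can choose $z_1,t_1$ with $\E[K(z_1,t_1)]>e^{\sqrt2 z_1}/(\eps z_1)$; the compound offspring mean $\E[K(z_1,t_1)]\cdot\eps z_1e^{-\sqrt2 z_1}$ then exceeds $1$ by construction. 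You need to restructure your offspring estimate along these lines.

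The second-moment machinery you flag as the ``main obstacle'' is in fact unnecessary. A Galton--Watson process with offspring mean strictly greater than $1$ survives with positive probability under no further moment assumptions, so no truncation of $K(z_1,t_1)$ and no Paley--Zygmund argument is required (note also that concatenating Paley--Zygmund generation by generation, as you describe, would give a bound decaying in the number of generations, hence in $\log t$). The paper's comparison is cleaner: either some particle \emph{succeeds} (stays above $\sqrt2 u-at^{1/3}$ up to time $t-t_0-t_1$) or the $n$-th generation of tube children stochastically dominates the $n$-th generation of the supercritical Galton--Watson process; since the true recursion must eventually terminate, letting $n\to\infty$ forces the success probability to be at least the Galton--Watson survival probability $\delta'$. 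Your final-generation concern is likewise dissolved by stopping the recursion at time $t-t_0-t_1$ and extending to time $t$ with an elementary fixed-time Brownian estimate, rather than invoking Lemma \ref{altqasymp}.
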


\begin{proof}
We construct a sequence of sets $Z_0,Z_1,\ldots$ whose sizes will behave very much like a Galton-Watson process.

By Lemma \ref{Mlem}, we can choose $t_0>0$, $\eps>0$ such that for all $t\geq t_0$,
\[\P(J_{t,z} \neq \emptyset) \geq \eps z e^{-\sqrt2 z} \hs \forall z\in[0,at^{1/3}/2).\]
By Lemma \ref{neveu} and the fact that $K(z,t)$ is increasing in $z$ and $t$, we can choose $z_1>0$, $t_1>0$ such that
\[\E[K(z_1,t_1)] > \frac{e^{\sqrt2 z_1}}{\eps z_1}.\]
Fix $t\geq t_0$. By increasing $t_0$ if necessary, we may also assume that $a(t_0+t_1)^{1/3} - z_1 < at_0^{1/3}$ and $L(t-t_0)>z_1$. We say that a particle \emph{succeeds} if it stays above the line $(\sqrt2 u - at^{1/3}, \hsl u\geq 0)$ up to time $t-t_0-t_1$.

Let $Z_0 = \{(v_0, 0)\}$ where $v_0$ is the initial particle. We now recursively define sets $Y_j$ and $Z_{j+1}$ for each $j\geq0$ (we will give an intuitive description immediately afterwards). Given $(v,s)\in Z_j$, let $Y^v_j$ consist of the set of ordered pairs $(v',s')$ such that $v'$ is a descendant of $v$ that hits the line $(\sqrt2 u - at^{1/3} + a(t-s)^{1/3}-z_1, \hsl u\geq s)$ at some time $u\in [s,s+t_1)$, and $s'$ is the first such time. Let
\[Y_j = \bigcup_{\substack{v: \exists s \text{ with }\\ (v,s)\in Z_j}} Y^v_j.\]
For $(v,s)\in Y_j$, let $Z^v_{j+1}$ consist of the set of ordered pairs $(v',s')$ such that $v'$ is a descendant of $v$ that hits the curve $(\sqrt2 u - at^{1/3} + a(t-u)^{1/3}, u\geq s)$ at some time $u\in[s+(t-s)/3, (s+2(t-s)/3)\wedge(t-t_0-t_1))$, $s'$ is the first such time, and $v'$ stayed above $\sqrt2 u - at^{1/3}$ for all $u\in[s,s')$. Let
\[Z_{j+1} = \bigcup_{\substack{v: \exists s \text{ with }\\ (v,s)\in Y_j}} Z^v_{j+1}.\]

In words, $Z_j$ contains particles that hit the top of the tube at appropriate times, and $Y_j$ contains descendants of those particles that fall distance $z_1$ below the top of the tube shortly afterwards.  Up until a particle succeeds, we see that each particle in $Z_j$ has a number of descendants in $Y_j$ with distribution $K(z_1,t_1)$, and each particle in $Y_j$ has at least one descendant in $Z_{j+1}$ with probability at least $\eps z_1 e^{-\sqrt2 z_1}$.

To complete the proof, on an auxiliary probability space let
\[\eta = \sum_{j=1}^K B_j\]
where $K, B_1, B_2, \ldots$ are independent random variables, $K$ is distributed like $K(z_1,t_1)$, and each $B_j$ is Bernoulli with parameter $\eps z_1 e^{-\sqrt2 z_1}$. Let $Z_0',Z_1',\ldots$ be a Galton-Watson process with offspring distribution $\eta$. Then since
\[\E[\eta] = \E[K]\eps z_1 e^{-\sqrt2 z_1} > 1,\]
the process $Z'$ survives forever with positive probability; that is,
\[\delta' := \lim_n \P(Z_n' > 0) > 0.\]
But as described above, either a particle succeeds or the size of $Z_n$ stochastically dominates $Z_n'$; otherwise said,
\[\P(Z_n = \emptyset) \leq \P(\exists v\in N(t-t_0-t_1) : X_v(u) > \sqrt2 u - at^{1/3} \hs \forall u\leq t- t_0-t_1) + \P(Z_n' = 0).\]
Clearly by construction $Z_n$ is eventually empty, so the left-hand side above converges to 1, but $\P(Z_n'=0)\to 1-\delta'$, so we must have
\[\P(\exists v\in N(t-t_0-t_1) : X_v(u) > \sqrt2 u - at^{1/3} \hs \forall u\leq t- t_0-t_1) \geq \delta'.\]
Now it is a simple task to show that if we stay above $\sqrt2 u - at^{1/3}$ up to $t-t_0-t_1$, we have a strictly positive probability (not depending on $t$) of doing so until time $t$. Indeed, recall that $v_0$ is our initial particle, and note that since $\P(\inf_{s\leq 1} X_{v_0}(s) \geq -1, X_{v_0}(1) \geq \sqrt2) > 0$, there exists $\delta''>0$ such that
\[\P(\exists v\in N(t-t_0-t_1) : X_v(u) > \sqrt2 u - at^{1/3} + 1 \hs \forall u\leq t- t_0-t_1) \geq \delta''.\]
But any particle above $\sqrt2 (t-t_0-t_1) - at^{1/3} + 1$ at time $t-t_0-t_1$ has probability at least
\[\P(\xi_u > \sqrt2 u - 1 \hs\forall u\leq t_0+t_1)\]
of staying above $\sqrt2 u - at^{1/3}$ for all $u\in[t-t_0-t_1,t]$. Since this does not depend on $t$, we get the result.
\end{proof}

The proof of Theorem \ref{bram} is now straightforward.

\begin{proof}[Proof of Theorem \ref{bram}]
Recall that
\[L(s) = L_t(s) = a(t+1-s)^{1/3},\]
\[f(s) = f_{t,z}(s) = -a(t+1)^{1/3} + z + \sqrt2 s,\]
and
\begin{multline*}
J_{t,z} = \Big\{ v \in \bigcup_{u\geq0} N(u) : \exists s\in [\sigma_v \vee t/3, \tau_v \wedge 2t/3) \hbox{ with }  X_v(s) = f_{t,z}(s) + L_t(s),\\
X_v(u) \in (f_{t,z}(u),f_{t,z}(u)+L_t(u)) \hs \forall u\leq s \Big\}.
\end{multline*}

We begin with the lower bound. The probability that a particle stays above $f_{t,z}(u)$ up to time $t$ is at least the probability that a particle $v$ hits $f_{t,z}(s)+L_t(s)$ at some time $s\in[t/3,2t/3)$, having stayed above $f_{t,z}(u)$ for all $u\leq s$, and then a descendant of $v$ stays above $f_{t,z}(u)$ for all times $u\in [s,t]$. Applying the Markov property,
\begin{multline*}
\P(\exists v\in N(t) : X_v(u) > f_{t,z}(u) \hs \forall u\leq t)\\
\geq \P(J_{t,z} \neq \emptyset) \inf_{s\in[t/3,2t/3)}\P(\exists v\in N(t-s) : X_v(u) > \sqrt2 u - a(t-s)^{1/3} \hs \forall u\leq t-s).
\end{multline*}
Lemma \ref{Mlem} tells us that $\P(J_{t,z} \neq \emptyset) \gtrsim ze^{-\sqrt2 z}$, and Proposition \ref{ppp} tells us that the latter probability is bounded below by some fixed $\delta>0$.
We deduce that
\[\P(\exists v\in N(t) : X_v(u) > f_{t,z}(u) \hs \forall u\leq t) \gtrsim ze^{-\sqrt2 z}\]
which is our desired lower bound.

For an upper bound, in order to stay above $f_{t,z}(s)$ for all $s\leq t$, clearly a particle must either hit $f_{t,z}(s)+L_t(s)$ for some $s\leq t$, or stay between $f_{t,z}(s)$ and $f_{t,z}(s)+L_t(s)$ for all $s\leq t$. That is,
\begin{multline*}
\P(\exists v\in N(t) : X_v(s) > f_{t,z}(s) \hsl \forall s\leq t)\\
\leq \sum_{j=0}^{\lfloor t\rfloor} \P\bigg(\exists v\in N(j+1) : X_v(u)-f_{t,z}(u)\in (0,L_t(u))\hsl\forall u\leq j, \hsl \sup_{s\in[j,j+1]} X_v(s)> f_{t,z}(j)+L_t(j)\bigg)\\
+ \P(\exists v\in N(t) : X_v(u) - f_{t,z}(u) \in (0,L_t(u))\hsl \forall u\leq t).
\end{multline*}
By the many-to-one lemma and the Markov property, this is at most
\[\sum_{j=0}^{\lfloor t\rfloor} \int_0^{L_t(j)} e^{j+1} p(t,z;j,y)\P\bigg(\sup_{s\in[0,1]}\xi_s \geq y\bigg)dy + e^t p(t,z;t,0).\]
We now apply Lemma \ref{1pb} to see that
\begin{align*}
&\P(\exists v\in N(t) : X_v(s) > f_{t,z}(s) \hsl \forall s\leq t)\\
&\lesssim \sum_{j=0}^{\lfloor t^{2/3}\rfloor}\int_0^{L_t(j)} e^{j+1} (y+1) ze^{-j-\sqrt2 z + \sqrt2 y} (j+1)^{-3/2} e^{-y^2/2} dy\\
&\hspace{10mm} + \sum_{j=\lceil t^{2/3}\rceil}^{\lfloor t\rfloor}\int_0^{L_t(j)} e^{j+1} (y+1) ze^{-j-\sqrt2 z + \sqrt2 y} t^{-1/2}(t+1-j)^{-1/2} e^{-y^2/2} dy\\
&\hspace{20mm} + e^t ze^{-t-\sqrt2 z}t^{-1/2}\\
&\lesssim ze^{-\sqrt2 z} \sum_{j=0}^{\lfloor t^{2/3}\rfloor} (j+1)^{-3/2} + ze^{-\sqrt2 z}t^{-1/2} \sum_{j=\lceil t^{2/3}\rceil}^{\lfloor t\rfloor} (t+1-j)^{-1/2} + ze^{-\sqrt2 z} t^{-1/2}\\
&\lesssim ze^{-\sqrt2 z}
\end{align*}
which completes the proof.
\end{proof}

\section{Proof of Theorem \ref{hush}}\label{hush_proof}

We recall the setup of Theorem \ref{hush}. For $v\in N(t)$, we let $\lambda(v,t) = \sup_{s\in[0,t]}\{\sqrt2 s - X_v(s)\}$ and define $\Lambda(t) = \min_{v\in N(t)} \lambda(v,t)$. Then we wish to show that almost surely,
\[\limsup_{t\to\infty} \frac{\Lambda(t)-a t^{1/3}}{\log t} =0\]
but
\[\liminf_{t\to\infty} \frac{\Lambda(t)-a t^{1/3}}{\log t} =-1/3\sqrt2,\]
where $a=a_c=3^{1/3}\pi^{2/3}2^{-1/2}$.

Showing that $\limsup(\Lambda(t)-at^{1/3})/\log t = 0$ is not difficult, simply by applying the estimates from Theorem \ref{bram} together with standard branching arguments and Borel-Cantelli exactly as in \cite{roberts:simple_path_BBM}. However the proof that $\liminf(\Lambda(t)-at^{1/3})/\log t = -1/3\sqrt2$ requires a novel approach. We begin with a heuristic explanation before starting on the details.

Since the probability of staying above $f_t(u):=\sqrt2 u - at^{1/3} + \frac{1}{\sqrt2}\log t$ up to time $t$ is approximately $1/t$, we may naively hope that the probability that a particle stays above $f_t(u)$ for all $u\leq t$ for some $t\in[n,2n)$ is of constant order (does not decay in $n$). A geometric trials argument would then suggest that the event occurs infinitely often. This is exactly the approach that works when looking at the position of the maximal particle: see \cite{hu_shi:minimal_pos_crit_mg_conv_BRW} or \cite{roberts:simple_path_BBM}. We might therefore begin by estimating moments of
\begin{equation}\label{Hdef}
\int_n^{2n} \# H_t dt,
\end{equation}
where $H_t$ is something like ``the set of particles that stay above $f_t(u)$ for all $u\leq t$''. However, we know as in previous sections that the second moments obtained in such a calculation will be too large, and so, following our strategy from Section \ref{bram_proof}, we might replace $H_t$ with ``the set of particles that hit the top of the $L_t$-tube at some time $u\in[t/3,2t/3]$'' for $L_t(u) = a(t-u)^{1/3}$. But this approach still yields second moments that are too large. This is a clue that something different is happening, and we will need an alternative strategy.

The key is to realise that if a particle manages to stay above $f_t(u)$ up to time $t$, it will have stayed above $f_t(u) + \delta$ for some $\delta>0$. Thus if $s\leq t$ and $f_t(0)-f_s(0)<\delta$, the same particle will have stayed above $f_s(u)$ up to time $s$. But there exists $\eta>0$ such that $f_t(0)-f_s(0)<\delta$ for all $s\in [t-\eta t^{2/3},t]$. We see, therefore, that the value of $\int_n^{2n} \# H_t dt$ is entirely misleading. We should instead work with something like
\begin{equation}\label{H2def}
\sum_{k=1}^{n^{1/3}} \# H_{n+kn^{2/3}}.
\end{equation}
But with the choice of $f_t$ above, this quantity will decay like $n^{-2/3}$, and we realise that we have been working with the wrong function $f_t$ all along. Instead we should choose
\[f_t(u) = \sqrt2 u - at^{1/3} + \frac{1}{3\sqrt2}\log t,\]
in which case (\ref{H2def}) will be of constant order. We show that its first two moments behave well and deduce that, infinitely often, there are particles that stay above the line $f_t(u)$, $u\in[0,t]$.

To see that there are no particles above $g_t(u) := \sqrt2 u - at^{1/3} + \frac{(1+\eps)}{3\sqrt2}\log t$ for $\eps>0$, we return to a quantity more like (\ref{Hdef}). Its expected value is large, but given that a particle stays above $g_t(u)$ for all $u\leq t$, it is even larger. We use this observation to complete the proof. (There are other possible approaches. For example, we could work with something like (\ref{H2def}) again, but then we would have to worry about times in $(n+kn^{2/3},n+(k+1)n^{2/3})$ for each $k$. This could be done fairly easily but would rely on some technical estimates. Using (\ref{Hdef}) will be slicker.)

We split the proof of Theorem \ref{hush} into four lemmas, each of which represents an upper or a lower bound for a $\limsup$ or a $\liminf$. We begin with the easier two.

\subsection{Bounds on the $\limsup$}

\begin{lem}\label{noone_io}
\[\limsup_{t\to\infty} \frac{\Lambda(t) - a t^{1/3}}{\log t} \geq 0 \hs \hbox{ almost surely}.\]
\end{lem}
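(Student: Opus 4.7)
The plan is to deduce Lemma \ref{noone_io} directly from the upper bound in Theorem \ref{bram} via a Borel--Cantelli argument along a geometric subsequence of times. The point is that for fixed $\eps>0$, Theorem \ref{bram} applied with $z=\eps\log t$ (which, for large $t$, lies in the required range $[1,a_ct^{1/3}/2]$) gives
\[
\P(\Lambda(t)\leq at^{1/3}-\eps\log t)\leq c_2\,\eps(\log t)\,t^{-\sqrt 2\,\eps},
\]
and this bound is summable along any geometric sequence $t_n$.

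Concretely, I would fix $\eps>0$, set $t_n=2^n$, and observe that
\[
\sum_{n\geq 1}\P\bigl(\Lambda(t_n)\leq at_n^{1/3}-\eps\log t_n\bigr)\lesssim \sum_{n\geq 1} n\cdot 2^{-\sqrt 2\,\eps n}<\infty.
\]
By Borel--Cantelli, almost surely only finitely many $n$ have $\Lambda(t_n)\leq at_n^{1/3}-\eps\log t_n$, so eventually $(\Lambda(t_n)-at_n^{1/3})/\log t_n > -\eps$. In particular
\[
\limsup_{t\to\infty}\frac{\Lambda(t)-at^{1/3}}{\log t}\;\geq\;\liminf_{n\to\infty}\frac{\Lambda(t_n)-at_n^{1/3}}{\log t_n}\;\geq\;-\eps
\]
almost surely. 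Intersecting over a countable sequence $\eps=1/k\downarrow 0$ yields the claim on a single event of full measure.

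There is essentially no obstacle here: the argument is a clean application of Theorem \ref{bram} and does not require any further branching estimates, since the theorem is already a statement about the branching system. The only minor point to verify is that $\eps\log t_n$ indeed lies in $[1,a_ct_n^{1/3}/2]$ for all large $n$, which is immediate because $\log t_n$ grows linearly in $n$ while $t_n^{1/3}$ grows exponentially. This is, as the author indicates in the preamble to the section, the easier half of Theorem \ref{hush}; the real work will come in proving the matching $\liminf$ bound of $-1/(3\sqrt 2)$.
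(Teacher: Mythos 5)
Your argument is correct, but it takes a genuinely different route from the paper's. The paper does not apply the first Borel--Cantelli lemma to the unconditional probabilities; instead it works along a doubly-exponentially growing sequence $t_n=\exp(\delta^{-1}\exp(2t_{n-1}))$, controls the population at time $t_{n-1}$ (at most $e^{2t_{n-1}}=\delta\log t_n$ particles, all within distance $\sqrt2\,t_{n-1}$ of the origin), and uses the branching Markov property together with the upper bound of Theorem \ref{bram} to show that the \emph{conditional} probability of $E_k=\{\exists v\in N(t_k):X_v(u)>\sqrt2 u-at_k^{1/3}+\eps\log t_k\ \forall u\le t_k\}$ given the past tends to zero, whence $\P(\bigcap_k E_k)=0$. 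That conditional argument is robust: it needs only that these conditional probabilities tend to $0$, not that the unconditional ones are summable, and it would survive a weaker tail bound than the one stated in Theorem \ref{bram} (which, per the Remarks, is indeed how the result was first proved). Your route exploits the full strength of the stated theorem: since $\P(\Lambda(t)\le at^{1/3}-\eps\log t)\le c_2\eps(\log t)\,t^{-\sqrt2\eps}$ is summable along $t_n=2^n$, the first Borel--Cantelli lemma (which requires no independence) gives $\Lambda(t_n)>at_n^{1/3}-\eps\log t_n$ for all large $n$, hence $\limsup_t(\Lambda(t)-at^{1/3})/\log t\ge-\eps$ almost surely, and intersecting over $\eps=1/k$ finishes; the range condition $\eps\log t_n\in[1,a_ct_n^{1/3}/2]$ holds for large $n$ as you note. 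Both proofs are valid: yours is shorter and entirely elementary given Theorem \ref{bram}, while the paper's conditioning argument is the one that generalises when only a weaker upper bound on $\P(\Lambda(t)\le at^{1/3}-z)$ is available.
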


\begin{proof}
To rephrase the statement of the lemma, we show that for any $\varepsilon\in(0,1)$ there are arbitrarily large times such that no particles have stayed above $\sqrt{2}u - at^{1/3} + \eps\log t$ for all $s\leq t$. Choose $\delta < \varepsilon/2$, let $t_1 = 1$ and for $n>1$ let $t_n = \exp\left(\frac{1}{\delta}\exp(2t_{n-1})\right)$. Define
\[E_n = \{\exists v\in N(t_n) : X_v(u) > \sqrt{2}u - at_n^{1/3} + \eps\log t_n \hs \forall u\leq t_n\}\]
and
\[F_n = \{|N(t_n)|\leq e^{2t_n}, \hs |X_v(t_n)| \leq \sqrt{2}t_n \hs \forall v\in N(t_n)\}.\]
We know that $F_n$ occurs for all large $n$, so it suffices to show that
\[\P\bigg(\bigcap_{k\geq n} (E_k\cap F_k)\bigg) = \lim_{N\to\infty} \prod_{k=n}^N \P\bigg(E_k\cap F_k \bigg| \bigcap_{j=n}^{k-1}(E_j\cap F_j)\bigg) = 0 \hbox{ forall } n\geq0.\]
For a particle $v$, let $N^v(t)$ be the set of descendants of $v$ at time $t$, and let $E^v_n$ be the event that some descendant of $v$ at time $t_n$ has stayed above $\sqrt{2}u - a t_n^{1/3} - \eps\log t_n$ for all times $u\leq t_n$. Also let $s_n = t_n-t_{n-1}$. Then if $v\in N(t_{n-1})$ and $X_v(t_{n-1})\leq \sqrt2 t_{n-1}$,
\begin{align*}
\P(E^v_n | \F_{t_{n-1}})&= \P(\exists w\in N^v(t_n) : X_w(u)>\sqrt2 u - at_n^{1/3} + \eps\log t_n \hs\forall u\leq t_n | \F_{t_{n-1}})\\
&= \P(\exists w\in N(s_n) : X_w(u)>\sqrt2 u - a t_n^{1/3} + \eps\log t_n \hs\forall u \leq s_n)\\
&\leq \P\left(\exists w\in N(s_n) : X_w(u)>\sqrt2 u - a s_n^{1/3} + \frac{\eps}{2}\log s_n \hs\forall u \leq s_n\right).
\end{align*}
Noting that $s_n\geq t_n/2$, by the upper bound in Theorem \ref{bram} the above is at most
\[\frac{c_2\eps}{2} s_n^{-\eps/\sqrt2}\log s_n \leq c_2\eps t_n^{-\eps/\sqrt2} \log t_n.\]
Thus, since $e^{2t_{k-1}} = \delta\log t_k$,
\begin{align*}
\P\bigg(E_k\cap F_k \bigg| \bigcap_{j=n}^{k-1}(E_j\cap F_j)\bigg) &\leq \P\bigg(E_k \bigg| \bigcap_{j=n}^{k-1}(E_j\cap F_j)\bigg)\\
&\leq \P\bigg(\bigcup_{v\in N(t_{k-1})} E^v_k \bigg| \bigcap_{j=n}^{k-1}(E_j\cap F_j)\bigg)\\
&\leq e^{2t_{k-1}} t_k^{-\eps/\sqrt2} e^{O(\log\log t_k)}\\
&\leq t_k^{-\eps/\sqrt2 + \sqrt2\delta} e^{O(\log\log t_k)}.
\end{align*}
Since we chose $\delta<\varepsilon/2$, this tends to zero as $k\to\infty$.
\end{proof}

\begin{lem}\label{exist_ev}
\[\limsup_{t\to\infty} \frac{\Lambda(t) - a t^{1/3}}{\log t} \leq 0 \hs \hbox{ almost surely}.\]
\end{lem}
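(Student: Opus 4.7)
The plan parallels Lemma \ref{noone_io} but uses the \emph{lower} bound in Theorem \ref{bram} (equivalently, Proposition \ref{ppp}) to amplify a positive probability of success into a near-certain event via branching independence. First I would observe that $\Lambda(t)$ is non-decreasing in $t$: along any ancestral lineage $\lambda(v,t)$ is non-decreasing, and every particle at time $t'>t$ descends from some $v\in N(t)$, so $\Lambda(t')\geq\Lambda(t)$. It therefore suffices, for each fixed $\varepsilon>0$, to verify $\Lambda(t_n)\leq a t_n^{1/3}+(\varepsilon/2)\log t_n$ for all large $n$ along a subsequence dense enough that $a t_{n+1}^{1/3}+(\varepsilon/2)\log t_{n+1} \leq a t^{1/3}+\varepsilon\log t$ for all $t\in[t_n,t_{n+1}]$; the choice $t_{n+1}-t_n\asymp t_n^{2/3}$, giving $t_n\asymp n^3$, works.

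Setting $B_n=\{\Lambda(t_n)>a t_n^{1/3}+(\varepsilon/2)\log t_n\}$, I would split $[0,t_n]$ at $T_n:=t_n-s_n$ with $s_n$ polylogarithmic in $t_n$, chosen so that $a s_n^{1/3}\leq(\varepsilon/4)\log t_n$. Conditional on $\F_{T_n}$, the sub-BBMs rooted at the particles $w\in N(T_n)$ on $[T_n,t_n]$ are independent, each of duration $s_n$; for each, Proposition \ref{ppp} applied in shifted coordinates gives conditional probability at least some fixed $\delta>0$ that some descendant $v\in N(t_n)$ satisfies $X_v(u)-X_w(T_n)\geq \sqrt 2(u-T_n)-a s_n^{1/3}$ for all $u\in[T_n,t_n]$. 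Let $\mathcal W_n$ be the set of $w\in N(T_n)$ that also (i) satisfy $X_w(u)\geq \sqrt 2 u-a t_n^{1/3}-(\varepsilon/2)\log t_n$ for all $u\leq T_n$, and (ii) satisfy $X_w(T_n)\geq \sqrt 2 T_n-a t_n^{1/3}+a s_n^{1/3}-(\varepsilon/2)\log t_n$. A simple algebraic check shows that any $w\in\mathcal W_n$ which produces such a descendant automatically yields a particle certifying $\neg B_n$, so by the conditional independence
\[
\P(B_n\mid\F_{T_n})\leq(1-\delta)^{|\mathcal W_n|}.
\]

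It then remains to show $|\mathcal W_n|$ is comfortably large. Since $a T_n^{1/3}=a t_n^{1/3}+o(1)$ and $a s_n^{1/3}=o(\log t_n)$, (i)--(ii) merely require $w$ to stay in a tube of width $O(\log t_n)$ about the typical line $\sqrt 2 u-a T_n^{1/3}$, with its endpoint near the top of that tube. First- and second-moment computations via Proposition \ref{qasymp}, in the spirit of Lemmas \ref{jaff1mom}--\ref{jaff2mom}, give $\E|\mathcal W_n|\gtrsim t_n^K$ for any fixed $K$ and $\E|\mathcal W_n|^2\lesssim (\E|\mathcal W_n|)^2$. To upgrade this expected bound to a high-probability lower bound I would insert a further split at $T_n':=T_n/2$: the $|N(T_n')|$ sub-BBMs on $[T_n',T_n]$ are conditionally independent, each contributing an independent count of descendants landing in $\mathcal W_n$ of positive mean (by a further single-particle estimate), and $|N(T_n')|\to\infty$ exponentially fast, so a Chernoff-type bound on the sum renders $\P(|\mathcal W_n|<\tfrac12\E|\mathcal W_n|)$ super-polynomially small in $t_n$. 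Combining, $\P(B_n)$ decays super-polynomially, hence is summable, and Borel--Cantelli together with the monotonicity step completes the argument.

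The main obstacle is this final concentration step: Paley--Zygmund alone only yields $\P(|\mathcal W_n|\geq c\E|\mathcal W_n|)\geq c_0$ for some constant $c_0\in(0,1)$, which does not sum. Making the failure probability summable requires combining the two-layer branching decomposition with a Chernoff-type bound on independent clan contributions, while simultaneously keeping the many-to-one/many-to-two asymptotics for $\ec(f,L,t)$ sharp enough (in the spirit of Section \ref{jaff_proof}) that $\E|\mathcal W_n|$ grows like a large power of $t_n$; this is the computational heart of the proof.
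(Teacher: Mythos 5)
Your overall strategy — condition on $\F_{T}$ for some intermediate time $T$, count particles satisfying a favourable condition at time $T$, and then use conditional independence plus Proposition \ref{ppp} to get $(1-\delta)^{\#\text{good particles}}$, followed by Borel--Cantelli — is the right general shape, and the monotonicity of $\Lambda$ and the choice of grid $t_n\asymp n^3$ are both fine. However, the choice $T_n = t_n - s_n$ with $s_n$ polylogarithmic is the wrong end of the time interval, and it creates a concentration problem that your proposal does not, and cannot by the suggested methods, resolve.

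Concretely, there are two serious flaws. First, the claim that $\E|\mathcal W_n|\gtrsim t_n^K$ for arbitrary fixed $K$ is false. Particles in $\mathcal W_n$ must stay above $\sqrt2 u - at_n^{1/3}-(\varepsilon/2)\log t_n$ for all $u\leq T_n\approx t_n$; via many-to-one and Proposition \ref{qasymp}, the barrier offset $z=-(\varepsilon/2)\log t_n$ multiplies the expected count by $e^{-\sqrt2 z} = t_n^{\varepsilon/\sqrt2}$, while the usual tube and sine factors contribute decaying powers of $t_n$. The net exponent is tied to $\varepsilon$ and is in fact negative for small $\varepsilon$ — but you need to take $\varepsilon\to0$. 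So $\E|\mathcal W_n|$ is not large, let alone a large power of $t_n$. (Note also that for $z<0$ the initial particle lies above $f(0)+L_t(0)$ with the standard $L_t$, so one must widen the tube; doing so does not rescue the power.) Second, and more fundamentally, even if the first moment were large, $|\mathcal W_n|$ is not concentrated. The whole point of the second-moment analysis in Section \ref{bram_proof} is that $\E|\mathcal W|^2/(\E|\mathcal W|)^2$ is of order $e^{\sqrt2 z}/z$, which is large; Paley--Zygmund is therefore tight and one only gets $\P(|\mathcal W_n|>0)$ bounded away from 0. Your proposed fix — split again at $T_n'=T_n/2$ and apply a Chernoff bound to the clan contributions — cannot work: each of the roughly $e^{T_n'}$ clans at time $T_n'$ contributes to $\mathcal W_n$ with probability that is exponentially small in $t_n$, and, conditioned on contributing, the contribution is heavy-tailed (this is again exactly what the second-moment computation measures). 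Chernoff bounds require exponential moments, which these summands lack; the failure probability you would obtain is of constant order, not super-polynomially small, and so Borel--Cantelli does not close.

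The paper avoids all of this by splitting at the \emph{very early} time $\varepsilon\log t$ rather than near $t$. At time $\varepsilon\log t$ one does not impose any barrier condition at all: one just needs the population size to be at least $t^{\varepsilon/2}$ and all particles to lie within distance $\sqrt2\varepsilon\log t$ of the origin. Both of these hold almost surely for all large $t$ by elementary facts about branching processes and $M(t)/t\to\sqrt2$ — no second moment, no concentration inequality. Each of those $\geq t^{\varepsilon/2}$ particles then independently has conditional probability at least a fixed $c>0$ (by the lower bound in Theorem \ref{bram}) of producing a descendant that stays above the required curve for the \emph{entire} remaining time $t-\varepsilon\log t$. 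This gives $(1-c)^{t^{\varepsilon/2}}$ directly, with no need to control a delicate rare-event count. The lesson is that the ``reservoir of independent trials'' should be taken before the barrier constraint starts to bite, not near the end of the interval.
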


\begin{proof}
We show that for large $t$ and any $\varepsilon>0$, there are always particles that have stayed above $\sqrt{2}u - at^{1/3} - \eps\log t$ for all times $u\leq t$.
Let
\[A_t = \{\not\exists v\in N(t) : X_v(u) > \sqrt2 u - at^{1/3} \hs \forall u\leq t\}\]
and
\[B_t = \{|N(\varepsilon\log t)|\geq t^{\varepsilon/2}, |X_v(\eps\log t)| \leq \sqrt2\varepsilon \log t \hs\forall v\in N(\varepsilon\log t)\}.\]
As before we write $N^v(t)$ for the set of descendants of particle $v$ that are alive at time $t$. Let $l_t = t - \varepsilon\log t$. Then for all large $t$,
\begin{align*}
&\P(A_t\cap B_t)\\
&\leq \E\left[\prod_{v\in N(\varepsilon\log t)} \hspace{-4mm}\P(\not\exists w\in N^v(t) : X_w(t)>\sqrt2 u - at^{1/3} \hs \forall u\leq t |\F_{\eps\log t}) \ind_{B_t}\right]\\
&\leq \E\left[\prod_{v\in N(\log t)}\P(\not\exists w\in N(l_t) : X_w(u)>\sqrt2 u - a t^{1/3} \hs \forall u\leq l_t)\ind_{B_t}\right]\\
&\leq \E\left(\not\exists w\in N(l_t) : X_w(u)>\sqrt2 u - a l_t^{1/3} \hs \forall u\leq l_t\right)^{t^{\eps/2}}.
\end{align*}
By the lower bound in Theorem \ref{bram} there exists $c>0$ such that this is at most
\[(1-c)^{t^{\eps/2}}.\]
Thus by Borel-Cantelli, for any lattice times $t_n\to\infty$, $\P(A_{t_n}\cap B_{t_n} \hbox{ infinitely often})=0$. But for all large $t$, $|N(\varepsilon\log t)|\geq t^{\varepsilon/2}$ and $|X_v(\varepsilon\log t)| \leq \sqrt2 \varepsilon\log t$ for all $v\in N(\eps\log t)$, so we deduce that $\P(A_{t_n} \hbox{ infinitely often}) = 0$. Then if we choose $t_n - t_{n-1}$ small enough, $-a t^{1/3} -\eps\log t < -a t_n^{1/3}$ for all $t\in(t_{n-1},t_n)$, so the result holds.
\end{proof}

\subsection{A lower bound on the $\liminf$}
\begin{lem}\label{noone_ev}
\[\liminf_{t\to\infty} \frac{\Lambda(t) - a t^{1/3}}{\log t} \geq -1/3\sqrt2 \hs \hbox{ almost surely}.\]
\end{lem}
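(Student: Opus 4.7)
The plan is to show, for each fixed $\varepsilon > 0$, that almost surely no particle stays above $g_t(u) := \sqrt 2 u - at^{1/3} + \frac{1+\varepsilon}{3\sqrt 2}\log t$ for all $u \leq t$, for all sufficiently large $t$; intersecting the resulting full-measure events over rational $\varepsilon > 0$ then yields the lemma. Write $c = (1+\varepsilon)/(3\sqrt 2)$ and
\[E_t = \{\exists v \in N(t) : X_v(u) > g_t(u) \hs \forall u \leq t\}.\]
Following the heuristic around (\ref{Hdef}), the idea is to convert a single ``bad'' time $t^*$ into a macroscopic interval of slightly-less-bad times, and then apply Markov's inequality together with the upper bound of Theorem \ref{bram}.

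The key observation is a comparison between barriers at nearby times. Fix $\tilde\varepsilon \in (0, \varepsilon)$, set $c' = (1+\tilde\varepsilon)/(3\sqrt 2)$, define the lower barriers $g'_s(u) = \sqrt 2 u - as^{1/3} + c'\log s$, and let $E'_s$ be the analogue of $E_s$ for $g'_s$. For $s \in [t^* - \eta t^{*2/3}, t^*]$ with a fixed small $\eta > 0$, an elementary estimate gives
\[ g_{t^*}(u) - g'_s(u) = a\bigl(s^{1/3} - t^{*1/3}\bigr) + c\log t^* - c'\log s \geq -\frac{a\eta}{3} + (c-c')\log s - O(1), \]
which is strictly positive for all $s$ sufficiently large. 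Hence, if some $v \in N(t^*)$ stays strictly above $g_{t^*}$ up to time $t^*$, its ancestor at any such $s$ stays strictly above $g'_s$, so $E'_s$ holds. Consequently, for all large $n$, on $\{\exists t^* \in [n, 2n] : E_{t^*}\}$ the Lebesgue measure of $\{s \in [n/2, 2n] : E'_s\}$ is at least $\eta n^{2/3}$.

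By Markov's inequality and Fubini,
\[ \P\bigl(\exists t^* \in [n, 2n] : E_{t^*}\bigr) \leq \frac{1}{\eta n^{2/3}} \int_{n/2}^{2n} \P(E'_s)\, ds. \]
The upper bound of Theorem \ref{bram}, applied with $z = c'\log s$ (valid since $c'\log s \leq as^{1/3}/2$ for large $s$), gives $\P(E'_s) \lesssim \log s \cdot s^{-(1+\tilde\varepsilon)/3}$. The integral is therefore of order $\log n \cdot n^{(2-\tilde\varepsilon)/3}$, and the whole right-hand side is $O(\log n \cdot n^{-\tilde\varepsilon/3})$. This is summable along the geometric subsequence $n_k = 2^k$, and the dyadic intervals $[n_k, n_{k+1}]$ cover every sufficiently large $t$, so the Borel--Cantelli lemma yields that almost surely $E_t$ fails for all large $t$, as required.

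The main technical balance is in the choice of the window width $\eta t^{*2/3}$: it must be long enough that the Markov step buys the factor $n^{-2/3}$ needed for summability, yet short enough that the barrier drop $a(t^{*1/3} - s^{1/3})$ across the window is comfortably absorbed by the slack $(c-c')\log t^*$. The $t^{2/3}$ scale---already identified in the heuristic discussion before Lemma \ref{noone_io}---arises precisely from this balance, and there is no genuine obstacle beyond organising it correctly.
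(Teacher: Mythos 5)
Your proposal is correct and follows essentially the same strategy as the paper: a single time $t^*$ at which a particle beats the barrier $g_{t^*}$ forces an interval of length $\Theta(n^{2/3})$ of times $s$ at which a slightly lower barrier is beaten, and Markov's inequality combined with the upper bound of Theorem~\ref{bram} then gives a summable probability along a dyadic subsequence. The only (cosmetic) differences are that the paper obtains the auxiliary barrier by a constant shift of $1$ (relying on $a/3<1$) rather than by lowering the coefficient of $\log t$, and uses the ratio formula in~(\ref{1mtrick}) rather than the equivalent direct Markov inequality.
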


\begin{proof}
Fix $\eps>0$. We show that for large $t$, there are no particles that stay above $\sqrt{2}u - at^{1/3}+((1+\eps)/3\sqrt2)\log t$ for all times $u\leq t$.

Choose $M$ large enough that for any $n\geq M$ and any $t\in [n,2n]$, we have
\[-at^{1/3} + \frac{(1+\eps)}{3\sqrt2}\log t \geq -as^{1/3} + \frac{(1+\eps)}{3\sqrt2}\log s -1 \hs\hs \forall s\in [t-n^{2/3},t].\]
Define
\[U_t = \{\exists v\in N(t) : X_v(u) > \sqrt2 u - at^{1/3} + \smfr{(1+\eps)}{3\sqrt2}\log t \hs \forall u\leq t\}\]
and
\[U'_t = \{\exists v\in N(t) : X_v(u) > \sqrt2 u - at^{1/3} + \smfr{(1+\eps)}{3\sqrt2}\log t - 1 \hs \forall u\leq t\}.\]
Let
\[ I_n = \int_n^{2n}\ind_{U_t} dt \hs\hs\hs\hbox{ and } \hs\hs\hs I'_n = \int_{n-n^{2/3}}^{2n} \ind_{U'_t} dt.\]
Note that
\begin{equation}\label{1mtrick}
\P(I_n>0) \leq \frac{\E[I'_n]}{\E[I'_n\ind_{\{I_n>0\}}]}\P(I_n>0) = \frac{\E[I'_n]}{\E[I'_n|I_n>0]}.
\end{equation}

If $I_n>0$, then there exists $t\in[n,2n]$ and $v\in N(t)$ such that $X_v(u)>\sqrt2 u - at^{1/3} + \frac{(1+\eps)}{3\sqrt2}\log t$ for all $u\leq t$. But then if $n\geq M$, $U'_s$ occurs for all $s\in [t-n^{2/3},t]$. We deduce that for $n\geq M$,
\[\E[I'_n | I_n>0] \geq n^{2/3}.\]
But by Theorem \ref{bram},
\begin{align*}
\E[I'_n] &= \int_{n-n^{2/3}}^{2n} \P(U'_t) dt\\
&\leq \int_{n-n^{2/3}}^{2n} c_2 \left(\frac{(1+\eps)}{3\sqrt2}\log t - 1\right) e^{-\frac{(1+\eps)}{3}\log t + \sqrt2} dt\\
&\lesssim n^{(2-\eps)/3}\log n.
\end{align*}
Plugging these estimates back into (\ref{1mtrick}), we get
\[\P(I_n>0) \lesssim n^{-\eps/3}\log n.\]
By Borel-Cantelli, the probability that there exist infinitely many $k$ with $I_{2^k}>0$ is zero. Since $\bigcup_{k\geq1} [2^k, 2\cdot 2^k) = [2,\infty)$ and $\eps>0$ was arbitrary, we deduce the result.
\end{proof}

\subsection{An upper bound on the $\liminf$}

The final lemma in our series requires the most work.

\begin{lem}\label{exist_io}
\[\liminf_{t\to\infty} \frac{\Lambda(t) - a t^{1/3}}{\log t} \leq -1/3\sqrt2 \hs \hbox{ almost surely}.\]
\end{lem}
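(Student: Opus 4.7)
Fix $\eps\in(0,1)$; the plan is to show that almost surely $\Lambda(t)\leq at^{1/3}-\frac{1-\eps}{3\sqrt 2}\log t$ for arbitrarily large $t$, which yields the stated liminf bound by letting $\eps\downarrow 0$. Set $z_n=\frac{1-\eps}{3\sqrt 2}\log n$ and retain the notation $f_{t,z}$, $L_t$, $M_{t,z}(u)$ from Section \ref{bram_proof}. Following the heuristic sketched above, I sample times on the grid $t_k=n+\lfloor kn^{2/3}\rfloor$, $k=1,\dots,K_n:=\lfloor n^{1/3}\rfloor$, within $[n,2n]$, so that each single-particle ``success window'' (of width $\asymp t^{2/3}$) contributes to $O(1)$ grid points.

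The main single-interval claim is that
\[p_n:=\P\bigl(\exists t\in[n,2n],\ \exists v\in N(t):X_v(u)>f_{t,z_t}(u)\ \forall u\leq t\bigr)\longrightarrow 1.\]
To establish it I apply Paley-Zygmund to the count
\[\tilde J_n=\sum_{k=1}^{K_n}\sum_{j\in\mathbb N\cap[t_k/3,\,2t_k/3)}\#M_{t_k,z_n}(j).\]
Lemma \ref{1mom} yields $\E[\tilde J_n]\asymp K_n\cdot z_ne^{-\sqrt 2 z_n}\asymp\log n\cdot n^{\eps/3}\to\infty$. For the second moment I extend Lemma \ref{2mom}'s many-to-two computation to pairs of possibly distinct tubes $f_{t_{k_1},z_n},f_{t_{k_2},z_n}$: the diagonal terms sum to $\lesssim\E[\tilde J_n]$ as in the proof of Lemma \ref{Mlem}, while for an off-diagonal pair I integrate over the split time $r$, using that prior to $r$ the single spine must lie in the intersection of the two tubes (whose offsets differ by $\asymp|k_1-k_2|$), and after $r$ each spine lies in its own tube so Lemma \ref{1pb} and Lemma \ref{altqasymp} apply. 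The expected outcome is $\E[\tilde J_n^2]\lesssim\E[\tilde J_n]+\E[\tilde J_n]^2$, giving $\P(\tilde J_n\geq 1)\geq\E[\tilde J_n]/(1+\E[\tilde J_n])\to 1$. Finally, any particle counted by $\tilde J_n$ feeds into the Galton-Watson tube-survival construction from the proof of Proposition \ref{ppp} (after adding a small cushion $-C_0$ to $z_n$ so that the ``plateau'' observation extends the witness to some $s\in[t_k-\eta t_k^{2/3},t_k]$ in the lower curve $f_{s,z_s}$), converting hitting to staying, so that $p_n\to 1$.

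To upgrade $p_n\to 1$ to an almost-sure i.o.\ statement I use the conditional Borel-Cantelli lemma. Choose a fast-growing sequence $n_m$ with $n_m\gg n_{m-1}^3$ (e.g.\ $n_m=2^{m!}$), set $T_m=2n_{m-1}$, and let $E_m$ be the event whose probability is $p_{n_m}$, so that $E_m\in\F_{2n_m}$. On the almost-sure event $G$ on which $M(T_m)\geq\sqrt 2 T_m-C\log T_m$ for all large $m$ and all particles satisfy $|X_v(u)|\leq 2\sqrt 2 u$ for $u\leq T_m$, pick $v^*_m\in\arg\max_{v\in N(T_m)}X_v(T_m)$. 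Given $\F_{T_m}$, the descendant BBM of $v^*_m$ is an independent BBM starting at $X_{v^*_m}(T_m)$; applying the single-interval estimate to this subtree in its own frame, with effective parameter $z^*=\sqrt 2 T_m-X_{v^*_m}(T_m)+z_{n_m}=O(\log T_m)+z_{n_m}$, gives subtree-success probability $\to 1$ (because $\log T_m=o(\log n_m)$ forces $z^*\leq\tfrac{1-\eps/2}{3\sqrt 2}\log n_m$ eventually), while the pre-$T_m$ ancestor constraint is automatic because $f_{t,z_t}(u)\leq\sqrt 2 u-an_m^{1/3}+O(\log n_m)\ll -2\sqrt 2 T_m$ on $G$ when $n_m\gg T_m^3$. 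Hence $\P(E_m\mid\F_{T_m})\to 1$ almost surely, so $\sum_m\P(E_m\mid\F_{2n_{m-1}})=\infty$ almost surely and conditional Borel-Cantelli gives $\P(E_m\text{ i.o.})=1$. Letting $\eps\downarrow 0$ completes the proof.

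The hardest step will be the off-diagonal second-moment bound $\E[\#M_{t_{k_1},z_n}(j_1)\#M_{t_{k_2},z_n}(j_2)]\lesssim(z_ne^{-\sqrt 2 z_n})^2$: when $|k_1-k_2|$ is large the two tubes are offset by $\asymp|k_1-k_2|$ in the $f$-direction, so the pre-split integrand is supported on a narrow overlap region and must be analysed with some care. Once this is handled, the rest is a clean assembly of Paley-Zygmund and conditional Borel-Cantelli.
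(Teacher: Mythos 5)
Your architecture is essentially the paper's: the same tube $f_{t,z}+a(t-u)^{1/3}$, the same grid of $\lfloor n^{1/3}\rfloor$ times spaced $n^{2/3}$ apart in $[n,2n]$ (chosen for exactly the ``plateau'' reason you give), first and second moments of the resulting count, Paley--Zygmund, and Proposition \ref{ppp} to convert hitting the top of the tube into survival. The two places you deviate are real but minor. First, you work at level $z_n=\frac{1-\eps}{3\sqrt2}\log n$ so that $\E[\tilde J_n]\to\infty$, whereas the paper works at exactly $\frac{1}{3\sqrt2}\log t$ and settles for $\E[S_n]\asymp\log n$, $\E[S_n^2]\lesssim\log^2 n$; both lose only an $\eps$ that is removed at the end. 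Second, your globalization uses conditional (L\'evy) Borel--Cantelli along a sparse sequence $n_m\gg n_{m-1}^3$ seeded by a near-maximal particle at time $2n_{m-1}$, whereas the paper seeds with the $t^{\eps/2}$ particles alive at time $\eps\log t$ and uses first Borel--Cantelli on $(1-\eta)^{t^{\eps/2}}$. Your variant is fine provided you only claim the conditional success probability is bounded below, which is all L\'evy's extension needs.

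That caveat matters because of one genuine error: the second-moment method does not give $\P(\tilde J_n\geq 1)\geq \E[\tilde J_n]/(1+\E[\tilde J_n])\to 1$. Paley--Zygmund gives $\P(\tilde J_n\geq1)\geq\E[\tilde J_n]^2/\E[\tilde J_n^2]$, and with $\E[\tilde J_n^2]\leq C(\E[\tilde J_n]+\E[\tilde J_n]^2)$ for some constant $C>1$ coming from all the $\lesssim$'s, this tends to $1/C$, not to $1$. So $p_n\to1$ and ``$\P(E_m\mid\F_{T_m})\to1$'' are unjustified; you must rephrase the whole assembly in terms of a uniform lower bound $p_n\geq c>0$, which your conditional Borel--Cantelli step fortunately still tolerates. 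Beyond that, the entire technical content of the lemma is the off-diagonal second-moment bound that you defer: controlling $\sum_{k_1,k_2}\Delta_{k_1,k_2}e^{-\sqrt2\Delta_{k_1,k_2}}$ with $\Delta\asymp|k_1-k_2|$, both for the no-split term and for the split-time integral with its several regimes of $r$ relative to $s^{2/3}$, $i$, $j$. This is precisely the paper's Lemma \ref{I2m} (five cases, the longest proof in the section), so while your identification of the difficulty and of the mechanism is correct, the proposal as written asserts rather than proves the step on which everything rests.
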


To prove this lemma, let
\[f_t(u) = \sqrt2 u - at^{1/3} + \frac{1}{3\sqrt2}\log t + 1\]
and
\[L_t(u) = a(t-u)^{1/3}.\]
Define
\[\mathcal A_t(u) = \{v\in N(u) : X_v(r) - f_t(r)\in (0,L_t(r))\hsl\forall r\leq u, \hs X_v(u)-f_t(u)\in [L_t(u)-2,L_t(u)-1)\},\]
the set of particles that have stayed in the $(f_t,f_t+L_t)$-tube up to time $u$ and are near the top of the tube at time $u$. Also let
\[A_t(u) = \{\xi_r - f_t(r)\in (0,L_t(r))\hsl\forall r\leq u, \hs \xi_u-f_t(u)\in [L_t(u)-2,L_t(u)-1)\},\]
the event that our single Brownian motion is ``in $\mathcal A_t(u)$''.

Set $D_t = [t/3,2t/3]\cap \mathbb{N}$ and $K_n = \{n+kn^{2/3} : k=1,\ldots,\lfloor n^{1/3}\rfloor\}$. Define
\[\mathcal M_t = \sum_{j\in D_t} \#\mathcal A_t(j)\]
and
\[S_n = \sum_{t\in K_n} \mathcal M_t.\]
Our aim is to show that $\P(S_n>0)$ is bounded away from $0$. We do this by estimating moments of $S_n$. We will use notation from Section \ref{bram_proof}.

\begin{lem}\label{I1m}
For all $n\geq 2$,
\[\E[S_n] \asymp \log n.\]
\end{lem}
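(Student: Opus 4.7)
The plan is routine: apply the many-to-one lemma to reduce each $\E[\#\mathcal A_t(j)]$ to a single-particle probability, invoke Lemma \ref{1pb}, and sum. The key observation is that the setup here coincides with that of Section \ref{bram_proof} upon taking the parameter $z$ there to equal $z_t := \tfrac{1}{3\sqrt 2}\log t + 1$ (modulo an inessential unit shift $t\mapsto t-1$ in the definitions of $f_{t,z}$ and $L_t$). Crucially, $z_t$ contributes a factor $\asymp \log t$ while the exponential $e^{-\sqrt 2 z_t}$ contributes precisely $\asymp t^{-1/3}$; the $t^{-1/3}$ will cancel against the $n^{1/3}$ growth of $|K_n|$, leaving only a logarithmic factor.

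To execute this, I would first check that for $t\in K_n$ and $j\in D_t$ with $n$ large, the hypotheses of Lemma \ref{1pb} are satisfied: $j\geq t^{2/3}$, $z_t\in[1, at^{1/3}/2]$, and $y=1\in[1, a(t-j)^{1/3}/2]$. The many-to-one lemma combined with Lemma \ref{1pb} then gives
\[
\E[\#\mathcal A_t(j)] \;=\; e^j \P(A_t(j)) \;\asymp\; (\log t)\, t^{-1/3}\, t^{-1/2}\, (t+1-j)^{-1/2}.
\]
Summing over $j\in D_t$, using $\sum_{j\in D_t}(t+1-j)^{-1/2} \asymp \sqrt t$, yields $\E[\mathcal M_t] \asymp (\log t)\, t^{-1/3}$.

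Finally, every $t\in K_n$ satisfies $t \asymp n$ and $|K_n| \asymp n^{1/3}$, so summing once more gives
\[
\E[S_n] \;=\; \sum_{t \in K_n} \E[\mathcal M_t] \;\asymp\; n^{1/3}\cdot(\log n)\cdot n^{-1/3} \;=\; \log n,
\]
as required. Small $n$ can be absorbed into the implicit constants, since $\E[S_n]$ is a positive, bounded quantity on any finite range. There is no genuine obstacle here; the computation is mechanical once one has identified the correct value of $z_t$, and all the non-trivial work has already been done in Lemma \ref{1pb}.
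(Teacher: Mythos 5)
Your proposal is correct and follows essentially the same route as the paper: many-to-one lemma to reduce to $e^j\P(A_t(j))$, identify this with $p(t,z_t;j,1)$ for $z_t\asymp\frac{1}{3\sqrt2}\log t$, apply Lemma~\ref{1pb} to obtain $q(t,z_t;j,1)$, observe that $e^{-\sqrt2 z_t}\asymp t^{-1/3}$, then sum over $j\in D_t$ and over $t\in K_n$. The intermediate estimate $\E[\mathcal M_t]\asymp(\log t)\,t^{-1/3}$ matches the paper's $\sum_{t\in K_n}\frac{\log n}{n^{1/3}}$ exactly.
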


\begin{proof}
Note that
\[\E[S_n] = \sum_{t\in K_n} \sum_{j\in D_t} \E[\#\mathcal A_t(j)] = \sum_{t\in K_n} \sum_{j\in D_t} e^j\P( A_t(j) ).\]
But by Lemma \ref{1pb}, for $j\in D_t$,
\[\P( A_t(j) ) = p(t,\frac{1}{3\sqrt2}\log t;j,1) \asymp q(t,\frac{1}{3\sqrt2}\log t;j,1) = \left(\frac{1}{3\sqrt2}\log t\right)e^{-j-\frac13\log t +\sqrt2}t^{-1/2}(t+1-j)^{-1/2}.\]
Thus
\[\E[S_n] \asymp \sum_{t\in K_n} \frac{\log n}{n^{1/3}} \asymp \log n.\qedhere\]
\end{proof}

\begin{lem}\label{I2m}
For all $n\geq 2$,
\[\E[S_n^2] \lesssim \log^2 n.\]
\end{lem}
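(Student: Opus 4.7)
The plan is to expand $\E[S_n^2] = \sum_{s,t\in K_n}\sum_{i\in D_s,\,j\in D_t}\E[\#\mathcal{A}_s(i)\,\#\mathcal{A}_t(j)]$ and bound the diagonal ($s=t$) and off-diagonal ($s\neq t$) contributions separately. The diagonal is immediate from Lemma \ref{2mom} applied with $z = \frac{1}{3\sqrt 2}\log t + 1 \asymp \log n$ (so that $\mathcal{A}_t(j)$ matches $M_{t-1,z}(j)$ up to bounded shifts): since $ze^{-\sqrt 2 z}\asymp (\log n)n^{-1/3}$, summing the three terms from Lemma \ref{2mom} over $i,j\in D_t$ gives $\E[\mathcal{M}_t^2]\lesssim (\log n)n^{-1/3}$, and then summing over $t\in K_n$ (of size $\asymp n^{1/3}$) gives a diagonal contribution of $O(\log n)$.

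For the off-diagonal I would fix $s<t$ in $K_n$ with $t-s = mn^{2/3}$, $m\in\{1,\ldots,\lfloor n^{1/3}\rfloor\}$, and apply the many-to-two lemma. The essential geometric observation is that for $s<t$ one has $f_s\geq f_t$ and $f_s+L_s\leq f_t+L_t$, so the $s$-tube is nested inside the $t$-tube on $[0,s]$; moreover, a Taylor expansion shows that the vertical gap at the tops, $\Delta(r):=(f_t+L_t)(r)-(f_s+L_s)(r)$, satisfies $\Delta(r)\asymp amr/(3n)$ for $r\in[0,2s/3]$. Conditioning on the height $y$ below the top of the $s$-tube at the split time $r$, and applying Proposition \ref{qasymp} (with Lemma \ref{altqasymp} in the boundary regimes $r<s^{2/3}$ or $(i\wedge j)-r<s^{2/3}$) to each of three pieces --- common ancestor in the $s$-tube on $[0,r]$ ending at height $y$; $\xi^{(1)}$ from there to the top of the $s$-tube at time $i$; $\xi^{(2)}$ from height $y+\Delta(r)$ below the top of the $t$-tube to the top of the $t$-tube at time $j$ --- the respective contributions are $\asymp y(\log n)e^{-r+\sqrt 2 y}n^{-4/3}$, $\asymp ye^{-(i-r)-\sqrt 2 y}n^{-1}$, and $\asymp (y+\Delta)e^{-(j-r)-\sqrt 2(y+\Delta)}n^{-1}$. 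The two $e^{\pm\sqrt 2 y}$ tilts from the first two pieces cancel, and the residual $y$-integral is $\int_0^\infty y^2(y+\Delta)e^{-\sqrt 2 y}\,dy\asymp \Delta\vee 1$, giving
\[\P(\xi^{(1)}\in A_s(i),\,\xi^{(2)}\in A_t(j)\mid T=r)\lesssim (\log n)(\Delta(r)\vee 1)e^{-\sqrt 2\Delta(r)}e^{-i-j+r}n^{-10/3}.\]

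Multiplying by $e^{i+j-r}$ and changing variables $u=\sqrt 2\Delta(r)$ (so $dr\asymp (n/m)du$) yields $\int_0^{i\wedge j}(\Delta\vee 1)e^{-\sqrt 2\Delta}\,dr\lesssim n/m$, hence the integral contribution to $\E[\#\mathcal{A}_s(i)\,\#\mathcal{A}_t(j)]$ is $\lesssim (\log n)n^{-7/3}/m$. Summing over the $\asymp n^2$ pairs $(i,j)\in D_s\times D_t$, then over the $\asymp n^{1/3}$ pairs $(s,t)$ at distance $mn^{2/3}$, then over $m$, gives $\sum_m (\log n)/m\asymp \log^2 n$. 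The single-particle term $e^{i\vee j}\P(A_s(i)\cap A_t(j))$ is bounded by a Markov decomposition at time $i\wedge j$, and using the same exponential shift analysis (the starting height in the $t$-tube at time $i\wedge j$ is $\Delta(i\wedge j)+O(1)\asymp m$), it contributes only $O(\log n)$, dominated by the two-particle integral. Combined, $\E[S_n^2]\lesssim \log^2 n$. The main obstacle is the careful bookkeeping of the three Girsanov-type tilts from Proposition \ref{qasymp}: the cancellation of $e^{\pm\sqrt 2 y}$ between the first two pieces is what keeps the $y$-integral finite, while the $e^{-\sqrt 2\Delta(r)}$ factor in the third piece, combined with the linear growth $\Delta(r)\asymp mr/n$, produces the decisive $1/m$ gain that makes the final sum over $m$ diverge only logarithmically rather than polynomially.
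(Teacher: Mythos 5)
Your treatment of the main regime is sound and, in one respect, sharper than the paper's own argument: by actually evaluating $\int_0^{i\wedge j}(\Delta_{s,t}(r)\vee 1)e^{-\sqrt2\Delta_{s,t}(r)}\,dr\lesssim n/m$ via the linear growth of $\Delta_{s,t}(r)$ in $r$, you extract the factor $1/m$ that makes $\sum_m 1/m\asymp\log n$ close the argument, whereas the paper bounds the integrand by its value at $r=0$, where $\Delta_{s,t}(0)=\frac{1}{3\sqrt2}\log(t/s)\leq\frac{1}{3\sqrt2}\log 2$ for every $s,t\in[n,2n]$, so that its concluding dichotomy on $\Delta_{s,t}(0)$ versus $\frac{1}{3\sqrt2}\log n$ does not do what is claimed. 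Your diagonal bound via Lemma \ref{2mom} is also fine. (Two minor slips: $\Delta_{s,t}(r)$ is $\asymp mn^{-1/3}+mr/n$, not $\asymp mr/n$, though this is harmless; and the nesting of the tubes goes into checking that the common ancestor's constraint is the $s$-tube, which you use correctly.)

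The genuine gap is the regime you dismiss in parentheses, $r\leq s^{2/3}$. There the first factor must be $\tilde q\bigl(\frac{1}{3\sqrt2}\log s;r,y\bigr)$, carrying $(y(\log s)r^{-3/2}\wedge 1)$ in place of a factor $\asymp y(\log s)n^{-1}$, and, crucially, $\Delta_{s,t}(r)\lesssim 1$ throughout $[0,s^{2/3}]$, so neither the $e^{-\sqrt2\Delta}$ decay nor the $1/m$ gain from your change of variables is available. Running your own computation in this regime yields $\int_0^{s^{2/3}} e^{i+j-r}\,\P(\cdot\mid T=r)\,dr\lesssim (\log n)\,n^{-7/3}$ \emph{uniformly in $m$}; summed over the $\asymp n^2$ pairs $(i,j)$ and the $\asymp n^{2/3}$ ordered pairs $(s,t)$ this gives $n^{1/3}\log n$, not $\log^2 n$. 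Moreover this is not slack that a better estimate would remove. Let $A$ be the event that some particle comes within distance $2$ of the tops of the tubes (which coincide to within $O(1)$ for all $t\in K_n$ when $r\lesssim n^{2/3}$) at some time $r\leq n^{1/2}$; a first-passage computation gives $\P(A)\lesssim e^{-\sqrt2(\frac{1}{3\sqrt2}\log n-O(1))}\asymp n^{-1/3}$, while each such hitting particle contributes, for \emph{every} one of the $n^{1/3}$ values of $t\in K_n$, an expected $\asymp 1$ descendants to $\sum_{j\in D_t}\#\mathcal A_t(j)$, so that $\E[S_n\ind_{A}]\gtrsim n^{-1/3}\cdot n^{1/3}=1$. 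Cauchy--Schwarz then forces $\E[S_n^2]\geq \E[S_n\ind_{A}]^2/\P(A)\gtrsim n^{1/3}$. So the early-split contribution must be excluded rather than estimated --- for instance by additionally killing particles that approach the top of the tube before time $t/3$, an event whose probability is $O(n^{-1/3})$ and which carries only an $O(1)$ share of $\E[S_n]$, hence is harmless for the Paley--Zygmund step --- and no rearrangement of the bounds in your proposal as it stands will produce $\log^2 n$.
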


\begin{proof}
First note that
\[\E[S_n^2] \leq 2 \sum_{t\in K_n} \sum_{\substack{s\in K_n\\s\leq t}} \sum_{i\in D_s}\sum_{j\in D_t} \E[\#\mathcal A_s(i)\cdot \#\mathcal A_t(j)].\]

Suppose first that $i\leq j$. Recall our two dependent Brownian motions $\xi^{(1)}$ and $\xi^{(2)}$ from the proof of Lemma \ref{jaff2mom}. Write $A^{(1)}_s(i)$ to mean the analogue of $A_s(i)$ but with $\xi^{(1)}$ in place of $\xi$, and similarly for $A^{(2)}_t(j)$. Then by the many-to-two lemma (specifically Example 6 of \cite{harris_roberts:many_to_few}),
\begin{equation*}
\E[\#\mathcal A_s(i)\cdot \#\mathcal A_t(j)] = e^j\P(A_s(i)\cap A_t(j)) + 2\int_0^i e^{i+j-r}\P(A_s^{(1)}(i)\cap A_t^{(2)}(j) \hsl |\hsl T=r) dr.
\end{equation*}
Let
\begin{align*}
\Delta_{s,t}(u) &= f_t(u)+L_t(u) - (f_s(u)+L_s(u))\\
&= -at^{1/3} + \frac{1}{3\sqrt2}\log t + a(t-u)^{1/3} + as^{1/3} - \frac{1}{3\sqrt2}\log s - a(s-u)^{1/3},
\end{align*}
the distance between the tops of the $t$- and $s$-tubes at time $u$.

We begin by estimating $\P(A_s(i)\cap A_t(j))$ by applying Lemma \ref{1pb} appropriately. Note that
\[\P(A_s(i)\cap A_t(j)) \lesssim p(s,\smfr{1}{3\sqrt2}\log s; i,1) p(t-i,\Delta_{s,t}(i);j-i,1).\]
Thus by Lemma \ref{1pb}, if $j-i\leq (t-i)^{2/3}$ then
\begin{align*}
\P(A_s(i)\cap A_t(j)) &\lesssim q(s,\smfr{1}{3\sqrt2}\log s; i,1) \cdot \tilde q(\Delta_{s,t}(i);j-i,1)\\
&\lesssim (\log s) e^{-i-\frac13\log s + \sqrt2}s^{-1/2}(s+1-i)^{-1/2}\\
&\hspace{40mm} \cdot \Delta_{s,t}(i) e^{-j+i -\sqrt2\Delta_{s,t}(i)+\sqrt2}(j-i+1)^{-3/2}.
\end{align*}
On the other hand, if $j-i>(t-i)^{2/3}$ then
\begin{align*}
\P(A_s(i)\cap A_t(j)) &\lesssim q(s,\smfr{1}{3\sqrt2}\log s; i,1) \cdot q(t-i,\Delta_{s,t}(i);j-i,1)\\
&= (\smfr{1}{3\sqrt2}\log s) e^{-i-\frac13\log s + \sqrt2}s^{-1/2}(s+1-i)^{-1/2}\\
&\hspace{40mm} \cdot \Delta_{s,t}(i) e^{-j+i -\sqrt2\Delta_{s,t}(i)+\sqrt2}(t-i)^{-1/2}(t+1-j)^{-1/2}.
\end{align*}
Thus for any $s,t\in K_n$ and $i\in D_s$ we have
\[\sum_{\substack{j\in D_t\\j\geq i}} e^j \P(A_s(i)\cap A_t(j)) \lesssim \Delta_{s,t}(i)e^{-\sqrt2 \Delta_{s,t}(i)} n^{-4/3}\log n\]
and hence (since $i\geq s/3$ and $\Delta_{s,t}(u)$ is increasing in $u$)
\[\sum_{i\in D_s} \sum_{\substack{j\in D_t\\j\geq i}} e^j \P(A_s(i)\cap A_t(j)) \lesssim \Delta_{s,t}(s/3)e^{-\sqrt2 \Delta_{s,t}(s/3)} n^{-1/3}\log n.\]
If $\Delta_{s,t}(s/3) \geq \smfr{1}{3\sqrt2}\log n$, then this is at most $n^{-2/3}\log^2 n$. Also, for each $t$, there are $O(\log n)$ values of $s$ within $K_n$ such that $\Delta_{s,t}(s/3) < \smfr{1}{3\sqrt2}\log n$, and in this case the above is at most $n^{-1/3}\log n$. Thus
\[\sum_{t\in K_n} \sum_{\substack{s\in K_n\\s\leq t}} \sum_{i\in D_s}\sum_{\substack{j\in D_t\\ j\geq i}} e^j \P(A_s(i)\cap A_t(j)) \lesssim \log^2 n.\]

We now set
\[J_{s,t}(i,j,r):=\P(A_s^{(1)}(i)\cap A_t^{(2)}(j) \hsl |\hsl T=r)\]
and proceed to estimating $\int_0^i e^{i+j-r} J_{s,t}(i,j,r) dr$ by integrating out the value of $\xi^{(1)}_r$ and considering several cases depending on the value of $r$. We will assume throughout that $r\leq i$, $i\in D_s$ and $j\in D_t$, since these are the values in which we are interested. Note that
\[J_{s,t}(i,j,r) \lesssim \int_0^{L_s(r)} p\left(s,\hbox{$\frac{1}{3\sqrt2}$}\log s; r,y\right)p(s-r,y;i-r,1)p(t-r, y+\Delta_{s,t}(r);j-r,1) dy.\]
The calculations that follow are mechanical and repetitive, but necessary.

\vspace{3mm}

\noindent
First suppose that $r\leq s^{2/3}$. Then
\begin{align*}
J_{s,t}(i,j,r) &\lesssim \int_0^{L_s(r)} \tilde q\left(\hbox{$\frac{1}{3\sqrt2}$}\log s; r,y\right)q(s-r,y;i-r,2)q(t-r, y+\Delta_{s,t}(r);j-r,2) dy\\
&\leq \int_0^{L_s(r)} y(\smfr{1}{3\sqrt2}\log s)e^{-r-\frac13\log s + \sqrt2 y} (r+1)^{-3/2}\\
&\hspace{20mm} \cdot 2y e^{-(i-r)-\sqrt2 y + 2\sqrt2} (s-r)^{-1/2}(s+1-i)^{-1/2}\\
&\hspace{27mm} \cdot 2(y+\Delta_{s,t}(r))e^{-(j-r)-\sqrt2(y+\Delta_{s,t}(r))+2\sqrt2} (t-r)^{-1/2}(t+1-j)^{-1/2} dy\\
&\asymp (\Delta_{s,t}(r)+1)e^{r-i-j-\sqrt2\Delta_{s,t}(r)}(r+1)^{-3/2} n^{-7/3}\log n.
\end{align*}
Thus in this case (using the fact that $\Delta_{s,t}(u)$ is increasing in $u$)
\[\int_0^i e^{i+j-r} J_{s,t}(i,j,r) dr \lesssim  (\Delta_{s,t}(0)+1)e^{-\sqrt2\Delta_{s,t}(0)} n^{-7/3} \log n.\]

\vspace{3mm}

\noindent
Secondly suppose that $r\geq s^{2/3}$, $i-r\geq (s-r)^{2/3}$ and $j-r \geq (t-r)^{2/3}$. Then
\begin{align*}
J_{s,t}(i,j,r) &\lesssim \int_0^{L_s(r)} q\left(s,\hbox{$\frac{1}{3\sqrt2}$}\log s; r,y+1\right)q(s-r,y;i-r,2)q(t-r, y+\Delta_{s,t}(r);j-r,2) dy\\
&\leq \int_0^{L_s(r)} (y+1)(\smfr{1}{3\sqrt2}\log s)e^{-r-\frac13\log s + \sqrt2 y+\sqrt2} s^{-1/2}(s+1-r)^{-1/2}\\
&\hspace{20mm} \cdot 2y e^{-(i-r)-\sqrt2 y + 2\sqrt2} (s-r)^{-1/2}(s+1-i)^{-1/2}\\
&\hspace{27mm} \cdot 2(y+\Delta_{s,t}(r))e^{-(j-r)-\sqrt2(y+\Delta_{s,t}(r))+2\sqrt2} (t-r)^{-1/2}(t+1-j)^{-1/2} dy\\
&\asymp (\Delta_{s,t}(r)+1)e^{r-i-j-\sqrt2\Delta_{s,t}(r)} n^{-10/3}\log n.
\end{align*}
Thus in this case
\[\int_0^i e^{i+j-r} J_{s,t}(i,j,r) dr \lesssim (\Delta_{s,t}(0)+1)e^{-\sqrt2\Delta_{s,t}(0)} n^{-7/3} \log n.\]

\vspace{3mm}

\noindent
Thirdly suppose that $i-r\leq (s-r)^{2/3}$ and $j-r\geq (t-r)^{2/3}$. Then
\begin{align*}
J_{s,t}(i,j,r) &\lesssim \int_0^{L_s(r)} q\left(s,\hbox{$\frac{1}{3\sqrt2}$}\log s; r,y+1\right)\tilde q(y;i-r,1)q(t-r, y+\Delta_{s,t}(r);j-r,2) dy\\
&\leq \int_0^{L_s(r)} (y+1)(\smfr{1}{3\sqrt2}\log s)e^{-r-\frac13\log s + \sqrt2 y+\sqrt2} s^{-1/2}(s+1-r)^{-1/2}\\
&\hspace{20mm} \cdot (y(i-r)^{-3/2}\wedge 1) e^{-(i-r)-\sqrt2 y + \sqrt2}\\
&\hspace{27mm} \cdot 2(y+\Delta_{s,t}(r))e^{-(j-r)-\sqrt2(y+\Delta_{s,t}(r))+2\sqrt2} (t-r)^{-1/2}(t+1-j)^{-1/2} dy\\
&\asymp (\Delta_{s,t}(r)+1)e^{r-i-j-\sqrt2\Delta_{s,t}(r)} (i+1-r)^{-3/2} n^{-7/3}\log n.
\end{align*}
Thus in this case
\[\int_0^i e^{i+j-r} J_{s,t}(i,j,r) dr \lesssim (\Delta_{s,t}(0)+1)e^{-\sqrt2\Delta_{s,t}(0)} n^{-7/3} \log n.\]

\vspace{3mm}

\noindent
Fourthly suppose that $i-r\geq (s-r)^{2/3}$ and $j-r\leq (t-r)^{2/3}$. Then
\begin{align*}
J_{s,t}(i,j,r) &\lesssim \int_0^{L_s(r)} q\left(s,\hbox{$\frac{1}{3\sqrt2}$}\log s; r,y+1\right)q(s-r,y;i-r,2)\tilde q(t-r, y+\Delta_{s,t}(r);j-r,1) dy\\
&\leq \int_0^{L_s(r)} (y+1)(\smfr{1}{3\sqrt2}\log s)e^{-r-\frac13\log s + \sqrt2 y+\sqrt2} s^{-1/2}(s+1-r)^{-1/2}\\
&\hspace{20mm} \cdot 2y e^{-(i-r)-\sqrt2 y + 2\sqrt2} (s-r)^{-1/2}(s+1-i)^{-1/2}\\
&\hspace{27mm} \cdot ((y+\Delta_{s,t}(r))(j-r)^{-3/2}\wedge 1)e^{-(j-r)-\sqrt2(y+\Delta_{s,t}(r))+\sqrt2} dy\\
&\asymp (\Delta_{s,t}(r)+1)e^{r-i-j-\sqrt2\Delta_{s,t}(r)} (j+1-r)^{-3/2} n^{-7/3}\log n.
\end{align*}
Thus in this case
\[\int_0^i e^{i+j-r} J_{s,t}(i,j,r) dr \lesssim (\Delta_{s,t}(0)+1)e^{-\sqrt2\Delta_{s,t}(0)} n^{-7/3} \log n.\]
(We could include an extra factor of $(j+1-i)^{-3/2}$ on the right-hand side above but we will not need it.)

\vspace{3mm}

\noindent
Fifthly, and finally, suppose that $i-r\leq (s-r)^{2/3}$ and $j-r\leq (t-r)^{2/3}$. Then
\begin{align*}
J_{s,t}(i,j,r) &\lesssim \int_0^{L_s(r)} q\left(s,\hbox{$\frac{1}{3\sqrt2}$}\log s; r,y+1\right)\tilde q(y;i-r,1)\tilde q(t-r, y+\Delta_{s,t}(r);j-r,1) dy\\
&\leq \int_0^{L_s(r)} (y+1)(\smfr{1}{3\sqrt2}\log s)e^{-r-\frac13\log s + \sqrt2 y+\sqrt2} s^{-1/2}(s+1-r)^{-1/2}\\
&\hspace{20mm} \cdot (y(i-r)^{-3/2}\wedge 1) e^{-(i-r)-\sqrt2 y + \sqrt2}\\
&\hspace{27mm} \cdot ((y+\Delta_{s,t}(r))(j-r)^{-3/2}\wedge 1)e^{-(j-r)-\sqrt2(y+\Delta_{s,t}(r))+\sqrt2} dy\\
&\asymp (\Delta_{s,t}(r)+1)e^{r-i-j-\sqrt2\Delta_{s,t}(r)} (i+1-r)^{-3/2} (j+1-r)^{-3/2} n^{-4/3}\log n.
\end{align*}
Thus in this case
\[\int_0^i e^{i+j-r} J_{s,t}(i,j,r) dr \lesssim (\Delta_{s,t}(0)+1)e^{-\sqrt2\Delta_{s,t}(0)} (j+1-i)^{-3/2} n^{-4/3} \log n.\]

\vspace{3mm}

In any of the five cases above, we have
\[\sum_{i\in D_s}\sum_{\substack{j\in D_t\\ j\geq i}} \int_0^i e^{i+j-r} J_{s,t}(i,j,r) dr \lesssim (\Delta_{s,t}(0)+1)e^{-\sqrt2\Delta_{s,t}(0)} n^{-1/3} \log n.\]
Now, if $\Delta_{s,t}(0)\geq \frac{1}{3\sqrt2}\log n$ then the above is at most $n^{-2/3}\log^2 n$, and if $\Delta_{s,t}(0) < \frac{1}{3\sqrt2}\log n$ then the above is at most $n^{-1/3}\log n$, but for any $t\in K_n$ there are only $O(\log n)$ values of $s$ within $K_n$ such that $\Delta_{s,t}(0)<\frac{1}{3\sqrt2}\log n$. We deduce that 
\[\sum_{t\in K_n} \sum_{\substack{s\in K_n\\s\leq t}} \sum_{i\in D_s}\sum_{\substack{j\in D_t\\ j\geq i}} \int_0^i e^{i+j-r} J_{s,t}(i,j,r) dr \lesssim \log^2 n.\]
Very similar calculations in the case $j<i$ give the same result, which completes the proof.
\end{proof}

\begin{proof}[Proof of Lemma \ref{exist_io}]
By Cauchy-Schwarz,
\[\P(S_n > 0) \geq \frac{\E[S_n]^2}{\E[S_n^2]}\gtrsim 1.\]
As in the proof of Theorem \ref{bram}, we can apply the Markov property and Proposition \ref{ppp} to see that
\[\P\left(\exists t\in [n,2n], v\in N(t) : X_v(u) \geq \sqrt2 u - at^{1/3} + \frac{1}{3\sqrt2}\log t \hsl \forall u\leq t\right)\geq \eta\]
for some $\eta>0$. Now an argument almost identical to that in Lemma \ref{exist_ev} completes the proof. We look at time $\eps\log t$ with $\eps>0$, and check that there are at least $n^{\eps/2}$ particles within distance $2\eps \log t$ of the origin. Then we apply the estimate above to see that the probability that none of these particles has a descendant which stays above $\sqrt2 u - at^{1/3} + (\frac{1}{\sqrt2}-2\eps)\log t$ for all $u\leq t$, for some $t\in [n,2n]$, is at most a constant times
\[(1-\eta)^{n^{\eps/2}}.\]
Applying the Borel-Cantelli lemma we see that, infinitely often as $t\to\infty$, there are particles that stay above $\sqrt2 u - at^{1/3} + (\frac{1}{\sqrt2}-2\eps)\log t$ for all $u\leq t$. Since $\eps>0$ was arbitrary, we obtain the desired result.
\end{proof}

The proof of Theorem \ref{hush} now follows by combining Lemmas \ref{noone_io}, \ref{noone_ev}, \ref{exist_ev} and \ref{exist_io}.

\subsection*{Acknowledgements}
Thanks to Louigi Addario-Berry for suggesting that I work on this problem, and for several helpful conversations. Thanks also to Gabriel Faraud for introducing me to \cite{faraud_hu_shi:conv_biased_rws_trees}, and to an anonymous referee for many helpful suggestions. This project was partially supported by a CRM-ISM postdoctoral fellowship at McGill University, partially by the Department of Statistics at the University of Warwick, and partially by EPSRC grant EP/K007440/1.

\bibliographystyle{plain}

\begin{thebibliography}{10}

\bibitem{berestycki_et_al:critical_bbm_absorp_sp}
Julien Berestycki, Nathanael Berestycki, and Jason Schweinsberg.
\newblock Critical branching brownian motion with absorption: survival
  probability.
\newblock {\em Probability Theory and Related Fields}, 2013.

\bibitem{bolthausen_et_al:entropic_repulsion}
E.~Bolthausen, J.D. Deuschel, and G.~Giacomin.
\newblock {Entropic repulsion and the maximum of the two-dimensional harmonic
  crystal}.
\newblock {\em Annals of Probability}, 29(4):1670--1692, 2001.

\bibitem{bramson:convergence_Kol_eqn_trav_waves}
M.~D. Bramson.
\newblock Convergence of solutions of the {K}olmogorov equation to travelling
  waves.
\newblock {\em Mem. Amer. Math. Soc.}, 44(285):iv+190, 1983.

\bibitem{bramson_zeitouni:tightness_max_GFF}
Maury Bramson and Ofer Zeitouni.
\newblock Tightness of the recentered maximum of the two-dimensional discrete
  gaussian free field.
\newblock {\em Communications on Pure and Applied Mathematics}, 65(1):1--20,
  2012.

\bibitem{derrida_simon:survival_prob_brw_absorbing_wall}
B.~Derrida and D.~Simon.
\newblock The survival probability of a branching random walk in presence of an
  absorbing wall.
\newblock {\em Europhys. Lett. EPL}, 78(6), 2007.

\bibitem{fang_zeitouni:consistent_maximal_displacement}
M.~Fang and O.~Zeitouni.
\newblock Consistent minimal displacement of branching random walks.
\newblock {\em Electronic Communications in Probability}, 15:106--118, 2009.

\bibitem{faraud_hu_shi:conv_biased_rws_trees}
G.~Faraud, Y.~Hu, and Z.~Shi.
\newblock Almost sure convergence for stochastically biased random walks on
  trees.
\newblock {\em Probability Theory and Related Fields}, pages 1--40, 2010.

\bibitem{feller:book}
W.~Feller.
\newblock {\em An introduction to probability theory and its applications. New
  York}, volume 11957.
\newblock 1968.

\bibitem{harris_roberts:many_to_few}
S.~C. Harris and M.~I. Roberts.
\newblock The many-to-few lemma and multiple spines.
\newblock Submitted. Preprint: \texttt{http://arxiv.org/abs/1106.4761}.

\bibitem{hu_shi:minimal_pos_crit_mg_conv_BRW}
Y.~Hu and Z.~Shi.
\newblock Minimal position and critical martingale convergence in branching
  random walks, and directed polymers on disordered trees.
\newblock {\em Ann. Probab.}, 37(2):742--789, 2009.

\bibitem{ito_mckean:diffusion_procs_sample_paths}
K.~It{\^o} and H.~P. McKean.
\newblock {\em Diffusion processes and their sample paths}.
\newblock Die Grundlehren der Mathematischen Wissenschaften, Band 125. Academic
  Press Inc., Publishers, New York, 1965.

\bibitem{jaffuel:crit_barrier_brw_absorption}
Bruno Jaffuel.
\newblock The critical barrier for the survival of branching random walk with
  absorption.
\newblock {\em Ann. Inst. Henri Poincar\'e Probab. Stat.}, 48(4):989--1009,
  2012.

\bibitem{kolmogorov_et_al:etude_kpp}
A.~N. Kolmogorov, I.~Petrovski, and N.~Piscounov.
\newblock \'{E}tude de l'\'{e}quation de la diffusion avec croissance de la
  quantit\'{e} de mati\`{e}re et son application \`{a} un problem biologique.
\newblock {\em Mosc. Univ. Bull. Math.}, 1:1--25, 1937.
\newblock Translated and reprinted in Pelce, P., \emph{Dynamics of Curved
  Fronts} (Academic, San Diego, 1988).

\bibitem{neveu:multiplicative_martingales_spatial_bps}
J.~Neveu.
\newblock Multiplicative martingales for spatial branching processes.
\newblock In {\em Seminar on Stochastic Processes, 1987 (Princeton, NJ, 1987)},
  volume~15 of {\em Progr. Probab. Statist.}, pages 223--242. Birkh\"auser
  Boston, Boston, MA, 1988.

\bibitem{novikov:asymptotic_behaviour_nonexit_probs}
A.A. Novikov.
\newblock On estimates and the asymptotic behavior of nonexit probabilities of
  a {W}iener process to a moving boundary.
\newblock {\em Math. USSR Sbornik}, 38(4):495--505, 1981.

\bibitem{roberts:simple_path_BBM}
M.~I. Roberts.
\newblock A simple path to asymptotics for the frontier of a branching
  {B}rownian motion.
\newblock {\em The Annals of Probability}, 41(5):3518--3541, 2013.

\end{thebibliography}
\def\cprime{$'$}

\end{document}